\tikzset{every loop/.style={}}
\newtheorem{prop}{Proposition}
\newtheorem{lem}{Lemma}
\newtheorem{con}{Conjecture}
\newtheorem{cor}{Corollary}
\newtheorem{theo}{Theorem}
\theoremstyle{definition}
\theoremstyle{remark}
\crefname{subfigure}{fig.}{figs.}
\Crefname{subfigure}{Figure}{Figure}
\title[A self-consistent system with multiple acims.]{A self-consistent dynamical system with multiple absolutely continuous invariant measures.}
\author[F. M. S\'elley]{Fanni M. S\'elley}
\address{Alfr\'ed R\'enyi Institute of Mathematics \\
Reáltanoda u. 13-15, H-1053 Budapest, Hungary
}
\curraddr{Laboratoire de Probabilit\'es, Statistique et Mod\'elisation (LPSM), Sorbonne Universit\'e, Universit\'e de Paris,
	4 Place Jussieu, 75005 Paris, France}
\email{selley@lpsm.paris}
\date{\today}
\begin{document}

\begin{abstract}
In this paper we study a class of \emph{self-consistent dynamical systems}, self-consistent in the sense that the discrete time dynamics is different in each step depending on current statistics. The general framework admits popular examples such as coupled map systems. Motivated by an example of \cite{blank2017ergodic}, we concentrate on a special case where the dynamics in each step is a $\beta$-map with some $\beta \geq 2$. Included in the definition of $\beta$ is a parameter $\varepsilon > 0$ controlling the strength of self-consistency. We show such a self-consistent system which has a unique absolutely continuous invariant measure (acim) for $\varepsilon=0$, but at least two for any $\varepsilon > 0$. With a slight modification, we transform this system into one which produces a phase transition-like behavior: it has a unique acim for $0< \varepsilon < \varepsilon^*$, and multiple for sufficiently large values of $\varepsilon$. We discuss the stability of the invariant measures by the help of computer simulations employing the numerical representation of the self-consistent transfer operator. 
\end{abstract}

\maketitle

\let\thefootnote\relax\footnotetext{\emph{AMS subject classification.} 37A05, 37A10, 37E05, 65P99, 37M25.}
\let\thefootnote\relax\footnotetext{\emph{Key words and phrases.} Self-consistent dynamics, $\beta$-map, absolutely continuous invariant measure, discrete transfer operator.}

\section{Introduction}

A self-consistent dynamical system is a discrete time dynamical system where the dynamics is not the \emph{same map} in every time step, but computed by the \emph{same rule} from some momentary statistical property of the system. Such systems arise in problems of both physical and mathematical motivation, but their rigorous mathematical treatment so far has been restricted to some special cases, mainly \emph{coupled map systems}. 

Self-consistent systems bear resemblance to the larger framework of systems that are governed by laws that vary over time. The uniqueness and stability of the invariant measure is thoroughly studied for examples including \emph{non-autonomous dynamical systems} \cite{gora2019absolutely,ott2009memory}, \emph{random dynamical systems} \cite{arnold1998random,baladi1996random,bogenschutz2000stochastic,buzzi2000absolutely} and \emph{random perturbations of dynamical systems} \cite{blank1998random,baladi1993spectra}. However, a self-consistent system is not a special case of any of these examples, as the dynamics in each step is not chosen via an abstract rule or drawn randomly from a set of possibilities, but is computed in a deterministic way from the trajectory of an initial probability distribution on the phase space.    

The introduction of self-consistent systems dates back to \cite{kaneko1990globally}, who studied globally coupled interval maps. In a globally (or mean-field) coupled map system the dynamics is the composition of the individual dynamics of a single site and a coupling dynamics which is typically the identity perturbed by the \emph{mean-field} generated by the sites, hence self-consistency arises from coupling. The effect of the mean-field is usually multiplied by a nonnegative constant $\varepsilon$ called the coupling strength, which controls to what extent the self-consistency distorts the uncoupled dynamics. The literature studying coupled map systems is quite extensive. As systems of coupled maps are just loosely connected to the present work, we refrain from giving a complete bibliography, as a starting point see \cite{chazottes2005dynamics, phdthesis} and the references therein. Typically the existence and uniqueness of the invariant measure is studied in terms of the coupling strength. Most available results prove the uniqueness of the SRB measure for small coupling strength \cite{blank2011self,jiang1998equilibrium,keller2005spectral}, but in some specific models phase transition-like phenomena can also be observed \cite{bardet2009stochastically}: unique invariant measure for small coupling strength, and multiple for stronger coupling. 

The literature of self-consistent systems not arising from coupled map systems is particularly sparse (in fact the only example known to us is the one discussed below). In this paper our goal is to study such a system which is in some sense much simpler than a coupled map system, hence interesting phase transition-like phenomena can be shown by less involved methods than the ones used for example in \cite{bardet2009stochastically}. As results of this type are particularly hard to obtain in the coupled map setting, our results, although obtained in a simplified self-consistent system, contribute to the few existing examples.

Our main point of reference is Section 5 of \cite{blank2017ergodic}, specifically the two systems defined by Example 5.2 which we now recall. Let $X=[0,1]$ and $E_{\mu}=\int_{0}^1x\text{ d}\mu(x)$, where $\mu$ is a probability measure on $X$. Let
\begin{enumerate}[(a)]
	\item $T_{\mu}(x)=x \cdot E_{\mu}$,
	\item $T_{\mu}(x)=x \slash E_{\mu} \mod 1$ (where $1\slash0 \mod 1$ is defined as 0).
\end{enumerate}
The map $T_{\mu}$ induces an action on the space of probability measures, and an invariant measure of such a system is a probability measure for which $\mu=(T_{\mu})_*\mu$. As Blank noted, in case (a) the only invariant measures are the point masses supported on 0 and 1, as $T_{\mu}$ is a contracting linear map in all nontrivial case. Case (b) is more interesting since now $T_{\mu}$ is a particular piecewise expanding map, a \emph{beta map}, first studied by \cite{parry1960beta,parry1964representations,renyi1957representations}. Blank pointed out, that the self-consistent system has infinitely many mutually singular invariant measures, including the Lebesgue measure. We are going to show that this picture is not complete, as the existence of multiple Lebesgue-absolutely continuous invariant measures (acims) can be shown.

The stability of these invariant measures is a more delicate question. By \emph{stability} we mean that the invariant density attracts all elements of some neighborhood in a suitable norm, hence these equilibrium states rightfully describe an asymptotic behavior of the system. Rigorous results in this direction are only available in case of smooth self-consistent dynamics \cite{keller2000ergodic,balint2018synchronization} and the treatment of piecewise smooth dynamics (such as example (b) of Blank) would require a completely different approach. However, to obtain a rough picture of the phenomena to be expected, computer simulations can be very useful. Numerical approximations of transfer operators and invariant densities have been extensively studied in the last few decades, typically by the help of generalized Galerkin-type methods. The idea behind these discretization schemes is the construction of a sequence of finite rank operators approximating the transfer operator of the dynamical system. The most notable scheme is Ulam's method \cite{ulam1960collection}, a relatively crude but robust method. The convergence of the fixed points of the finite rank operators to the invariant density was first proved by \cite{li1976finite} in case of one-dimensional dynamics, and since then, many generalizations have followed. For a comprehensive study see \cite{murray1997discrete} and the references within. Better approximation can be achieved by higher order Galerkin-type methods \cite{ding1993high,ding1996finite}. For a more extensive survey of the discretization of the Perron-Frobenius operator see \cite{klus2016numerical}.  The approximation of invariant densities of \emph{non-autonomous} dynamical systems on the other hand have a much more limited literature, focusing mainly on the setting of random dynamical systems \cite{froyland1999ulam,froyland2014stability,froyland2019quenched}. The self-consistent case, to the best of our knowledge is an uncharted territory. To make the first steps, we consider specific systems (motivated by example (b) of Blank) with piecewise linear dynamics. The advantage of such systems is that the transfer operator maps the space of piecewise constant functions to itself, hence no discretization scheme is needed to compute pushforward densities. However, the task is not completely trivial, as the chaotic nature of the dynamics causes computational errors to blow up rapidly.    

The setting and our results are summarized in Section \ref{sec_results}. In Section \ref{sec_psi} we introduce an auxiliary function $\psi^{\varepsilon}$ providing the main tool for the proofs of our results. In Section \ref{sec_thm1} we study a self-consistent system which interpolates linearly between the doubling map and case (b) of Blank's example by a parameter $\varepsilon$. We show that the system has a unique acim only in the case of $\varepsilon=0$ (giving the doubling map) and has multiple absolutely continuous invariant measures for any $\varepsilon > 0$ (in particular for $\varepsilon=1$, giving Blank's example.) In Section \ref{sec_thm2} we study a modified version of this self-consistent system which indeed exhibits a phase transition like-behavior: it has a unique acim if $\varepsilon$ is smaller than some $\varepsilon^* > 0$ and multiple acims if $\varepsilon$ is sufficiently large. In Section \ref{sec_num} we showcase  some results of computer simulations intended to study the stability of invariant densities with respect to the iteration of the self-consistent transfer operator.

\textbf{Acknowledgments.} The research was supported
by the European Research Council (ERC) under the European Union's Horizon 2020 research and innovation programme (grant agreement No 787304) and by the Hungarian National Foundation for Scientific Research (NKFIH OTKA) grant K123782. The author would like to express gratitude to Péter Bálint, Imre Péter Tóth and Péter Koltai for helpful discussions. The author also expresses gratitude to an anonymous referee for providing a substantial amount of help to correct the proof of Lemma \ref{lem_lip}.

\section{The results} \label{sec_results}
Let $X=[0,1]$ and denote the space of probability measures on $X$ by $\mathcal{M}(X)$. For a measure $\mu \in \mathcal{M}(X)$, let 
\begin{equation} \label{eq_expected}
E_{\mu}=\int_{0}^1x\text{ d}\mu(x).
\end{equation}
Given an initial probability measure $\mu \in \mathcal{M}(X)$ and $\varepsilon \geq 0$, define the dynamics $T_{\mu}^{\varepsilon}: X \to [0,1)$ as 
\begin{equation}
T_{\mu}^{\varepsilon}(x)=\left(2+\varepsilon F\left(\frac{1}{E_{\mu}}-2\right)\right)x \mod 1, \quad x \in X,
\end{equation}
where $F \in C^1(\mathbb{R},\mathbb{R})$ is such $F(0)=0$. The parameter $\varepsilon$ controls to what extent the measure $\mu$ influences the dynamics. (If $\mu_0=\delta_0$ the Dirac mass concentrated on zero, we define $T_{\delta_0}^{\varepsilon} \equiv 0$.) In particular for $\varepsilon=0$, $\mu$ has no influence at all and
\begin{equation} \label{eq_doubling}
T_{\mu}^{0}(x)=2x \mod 1, \quad x \in X 
\end{equation}
for any $\mu \in \mathcal{M}(X)$.

We are going to study the \emph{self-consistent system} 
\begin{equation} \label{eq_selfc}
(\mathcal{M}(X),(T_{\cdot}^{\varepsilon})_*\cdot).
\end{equation}
An \emph{invariant measure} of the system \eqref{eq_selfc} is a measure $\mu \in \mathcal{M}(X)$ such that
\[
(T_{\mu}^{\varepsilon})_*\mu=\mu.
\] 
It is easy to see that the system \eqref{eq_selfc} has many invariant measures: for instance the Lebesgue measure $\lambda$, since
\[
E_{\lambda}=\int_{0}^1x\text{ d}x=\frac{1}{2} \Rightarrow T_{\lambda}^{\varepsilon}(x)=2x \mod 1
\]
implying that
\begin{equation} \label{eq_lebesgue}
(T_{\lambda}^{\varepsilon})_*\lambda=\lambda.
\end{equation}
In addition to this, infinitely many mutually singular invariant measures exist: consider a measure uniformly distributed on a periodic orbit of the doubling map which is symmetric about $\frac{1}{2}$. More precisely, consider $x \in [0,1]$ with binary expansion
\[
. \overline{\underbrace{111\dots1}
_{k \text{ times}}\underbrace{000\dots0}_{k \text{ times}}}
\] 
As the doubling map acts as a shift on binary expansions, the images of $x$ are
\begin{align*}
&. \overline{11\dots11
00\dots00} \qquad .\overline{00\dots00
11\dots11}  \\
&. \overline{11\dots10
00\dots01} \qquad .\overline{00\dots01
11\dots10} \\
&. \overline{11\dots00
00\dots11} \qquad .\overline{00\dots11
11\dots00}\\
& \vdots \\
&. \overline{10\dots00
01\dots11} \qquad .\overline{01\dots11
10\dots00} 
\end{align*}
We can see that this orbit is symmetric about $\frac{1}{2}$: if $y$ is in this orbit, then so is $1-y$. This implies that if $\mu=\frac{1}{k+1}\sum_{j=0}^k \delta_{2^{j}x \mod 1}$, we have $E_{\mu}=\frac{1}{2}$ and $T_{\mu}^{\varepsilon}(x)=2x \mod 1$, by which $(T_{\mu}^{\varepsilon})_*\mu=\mu$.

Our main question is if \eqref{eq_selfc} has multiple invariant measures absolutely continuous with respect to the Lebesgue measure (\emph{acim}s). 

For $\varepsilon=0$ we have seen that irrespective of the measure $\mu$, the dynamics is always the doubling map. So in this case we have a unique absolutely continuous invariant measure.

We first show that by taking the identity as $F$ (producing Blank's example for $\varepsilon=1$) this property is immediately lost as we introduce self-consistency.  

\begin{theo} \label{theo_main2}
	Consider the self-consistent system \eqref{eq_selfc} and suppose that $F(x)=x$. Then for any $\varepsilon > 0$, at least two acims exist: one is Lebesgue, and the other is equivalent to Lebesgue. 
\end{theo}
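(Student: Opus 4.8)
The plan is to reduce the search for acims to a one–dimensional fixed–point problem. The crucial observation is that $T_\mu^{\varepsilon}$ depends on $\mu$ only through the scalar $E_\mu$: writing $\beta = \beta(E_\mu) := 2+\varepsilon\bigl(\tfrac{1}{E_\mu}-2\bigr)$, the map $T_\mu^{\varepsilon}$ is exactly the $\beta$-transformation $S_\beta(x)=\beta x \bmod 1$. By the classical theory of \cite{renyi1957representations,parry1960beta}, for $\beta>1$ the map $S_\beta$ has a \emph{unique} acim $\mu_\beta$ (the Parry measure), whose density $h_\beta=\tilde h_\beta/\!\int \tilde h_\beta$ with $\tilde h_\beta=\sum_{n\ge0}\beta^{-n}\mathbf 1_{[0,t_n)}$, $t_n:=S_\beta^n(1)$, satisfies $\tfrac{\beta-1}{\beta}\le h_\beta\le\tfrac{\beta}{\beta-1}$ and is therefore equivalent to $\lambda$. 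Hence, if $\mu$ is an acim of \eqref{eq_selfc} then $E_\mu>0$ (no atom at $0$) and $\mu$ is an acim of $S_{\beta(E_\mu)}$, forcing $\mu=\mu_{\beta(E_\mu)}$; conversely $\mu_\beta$ is invariant for \eqref{eq_selfc} exactly when $\beta(E(\beta))=\beta$, where $E(\beta):=\int_0^1 x\,d\mu_\beta(x)$. So it suffices to locate zeros of
\[
\psi^{\varepsilon}(\beta):=2+\varepsilon\Bigl(\tfrac{1}{E(\beta)}-2\Bigr)-\beta ,\qquad \beta>1 .
\]

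The Lebesgue solution corresponds to $\beta=2$: the doubling map has $E(2)=\tfrac12$, so $\psi^{\varepsilon}(2)=0$. I would then produce a \emph{second} zero in $(2,3)$ by the intermediate value theorem, using three ingredients: (i) $\psi^{\varepsilon}$ is continuous on $[2,3]$; (ii) $\psi^{\varepsilon}(\beta)>0$ for $\beta$ slightly above $2$; and (iii) $\psi^{\varepsilon}(3)=-1<0$, since $S_3$ also preserves $\lambda$, giving $E(3)=\tfrac12$. For (i) I would invoke statistical stability of $\beta$-maps, i.e.\ continuity of $\beta\mapsto E(\beta)=\tfrac12\,\bigl(\sum_n\beta^{-n}t_n^2\bigr)\big/\bigl(\sum_n\beta^{-n}t_n\bigr)$, obtained from the explicit Parry formula above together with a dominated–convergence argument handling the countably many $\beta$ at which the individual orbit points $t_n$ jump.

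The heart of the proof — and the main obstacle — is (ii). Writing $\beta=2+\delta$, the orbit of $1$ behaves like $t_0=1$ and $t_n=\beta^{\,n-1}\delta\approx 2^{n}\delta$ for $1\le n\lesssim \log_2(1/\delta)$, before the $\bmod 1$ wrapping takes over. Since each such term contributes $\beta^{-n}t_n=\delta/\beta$ to the denominator, one gets the delicate estimate
\[
\sum_{n\ge0}\beta^{-n}t_n=1+\tfrac{\delta}{2}\log_2\!\tfrac1\delta+O(\delta),
\qquad
\sum_{n\ge0}\beta^{-n}t_n^{2}=1+O(\delta),
\]
whence $E(2+\delta)=\tfrac12\bigl(1-\tfrac{\delta}{2}\log_2\tfrac1\delta+O(\delta)\bigr)$ and $\tfrac{1}{E}-2\approx \delta\log_2\tfrac1\delta$. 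Consequently
\[
\psi^{\varepsilon}(2+\delta)\approx \delta\bigl(\varepsilon\log_2\tfrac1\delta-1\bigr)>0
\]
for all sufficiently small $\delta>0$. The point is that the $\delta\log(1/\delta)$ term, coming from the $\Theta(\log(1/\delta))$ near-doubling orbit points, dominates the linear term $-\delta$ for \emph{every} fixed $\varepsilon>0$; equivalently $\psi^{\varepsilon}$ has right–derivative $+\infty$ at $\beta=2$. Making this asymptotic rigorous (controlling the post-wrapping tail $n\gtrsim\log_2(1/\delta)$ and the error terms) is the technical crux.

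Combining (i)–(iii), the intermediate value theorem yields $\beta^{*}\in(2,3)$ with $\psi^{\varepsilon}(\beta^{*})=0$. Then $\mu_{\beta^{*}}$ is an invariant acim of \eqref{eq_selfc}; it is equivalent to $\lambda$ because $h_{\beta^{*}}$ is bounded between positive constants, and it is distinct from $\lambda$ because $E(\beta^{*})<\tfrac12$ (as $\beta^{*}>2$ forces $\tfrac{1}{E(\beta^{*})}>2$). This exhibits the two required acims for every $\varepsilon>0$.
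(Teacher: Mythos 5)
Your proposal is correct, and its skeleton is the same as the paper's: reduce acims to fixed points of the scalar map $\beta\mapsto 2+\varepsilon\left(\tfrac{1}{E_{\mu_\beta}}-2\right)$ (the paper's $\psi^{\varepsilon}$, Section~\ref{sec_psi}), show it exceeds the diagonal just to the right of $\beta=2$ because $E_{\mu_\beta}$ deviates from $\tfrac12$ at the super-linear rate $(\beta-2)\log\tfrac{1}{\beta-2}$, and close with the intermediate value theorem using continuity of $\psi^{\varepsilon}$ and the fact that integer $\beta$ gives back Lebesgue. The difference lies in how the central estimate is executed. You derive two-sided asymptotics valid for \emph{every} $\beta=2+\delta$ with $\delta$ small, by tracking the $\approx\log_2(1/\delta)$ doubling steps of the orbit of $1$ and bounding the post-wrapping tail by $O(\delta)$ (which does work, since $\beta^{-N}\lesssim\delta$ at the wrapping time $N$); the paper instead evaluates $E_{\mu_{\beta_k}}$ \emph{exactly} along the explicit sequence $\beta_k$ solving $\beta_k^{k}(\beta_k-2)=1$ (Lemma~\ref{lem_id}), for which the orbit of $1$ terminates at $0$ after $k+1$ steps, so all series are finite sums and no tail estimate is needed. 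Your version is marginally stronger -- it shows the right derivative of $\psi^{\varepsilon}$ at $2$ is $+\infty$, not merely unbounded growth along a sequence -- at the cost of exactly the bookkeeping you flag as the crux; the paper's choice of $\beta_k$ is a device to avoid that bookkeeping entirely. Both routes then need $\psi^{\varepsilon}(3)=2<3$ and continuity on $[2+\delta_0,3]$ to apply the IVT.

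One caveat on your ingredient (i): continuity of $\beta\mapsto E_{\mu_\beta}$ cannot be obtained by a routine dominated-convergence argument applied termwise to the Parry series. At a parameter $\beta_0$ where some orbit point $T_{\beta_0}^{n}(1)$ hits the discontinuity (e.g.\ $\beta_0=1+\sqrt 2$, where $T_{\beta_0}^2(1)=0$), the one-sided limit of the \emph{unnormalized} density $\sum_n \beta^{-n}\mathbf{1}_{[0,T_\beta^n(1))}$ is genuinely different from $h_{\beta_0}$ (it is a nontrivial scalar multiple of it, arising from the quasi-periodic orbit structure just below $\beta_0$); continuity of the normalized quantity $E_{\mu_\beta}$ survives only because this discrepancy cancels upon normalization. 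So this step really does require the statistical-stability input you allude to -- the paper imports it from Keller's perturbation results via part 2 of Lemma~\ref{lem_psicont} and Corollary~\ref{cor_psi} -- rather than termwise convergence of the series. With that reference in place of the dominated-convergence remark, your argument is complete.
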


We then show that under some additional assumptions on $F$, the uniqueness of the acim persists for $\varepsilon$ small enough. But not indefinitely: we also show that for sufficiently strong self-consistency this is not the case, i.e. multiple acims exist if $\varepsilon$ is large enough. 

\begin{theo} \label{theo_main}
Consider the self-consistent system \eqref{eq_selfc} and suppose that $F(x) > 0$ for all $x \neq 0$ and $F'(x)=O\left(\frac{1}{|\log x|}\right)$ as $x \to 0+$.
\begin{enumerate}
	\item There exists an $\varepsilon^*_1 > 0$ such that for $0 \leq \varepsilon < \varepsilon^*_1$ the only acim is the Lebesgue measure. 
	\item There exists an $\varepsilon^*_ 2\geq \varepsilon^*_1$ such that for $\varepsilon \geq \varepsilon^*_2$ at least two acims exist: one is Lebesgue, and the other is equivalent to Lebesgue. 
\end{enumerate}
\end{theo}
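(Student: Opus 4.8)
The plan is to reduce the search for acims to a one-parameter fixed-point problem. For a fixed measure $\mu$ the map $T^\varepsilon_\mu$ is simply the $\beta$-map $x \mapsto \beta x \bmod 1$ with $\beta = \beta(\mu) := 2 + \varepsilon F(1/E_\mu - 2) \ge 2$ (the hypothesis $F>0$ guarantees $\beta \ge 2$). Since a $\beta$-map has a \emph{unique} acim, call it $\nu_\beta$ with density $h_\beta$, any acim $\mu$ of \eqref{eq_selfc} must coincide with $\nu_{\beta(\mu)}$; writing $m(\beta) := \int_0^1 x\,h_\beta(x)\,dx = E_{\nu_\beta}$ and $g(\beta) := 1/m(\beta) - 2$, self-consistency becomes the scalar equation
\[
\Phi_\varepsilon(\beta) := \beta - 2 - \varepsilon F\bigl(g(\beta)\bigr) = 0, \qquad \beta \ge 2 .
\]
Conversely every root $\beta^*$ produces the acim $\nu_{\beta^*}$, which is equivalent to Lebesgue because $h_{\beta^*}$ is bounded away from $0$ and $\infty$. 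Thus acims correspond bijectively to roots of $\Phi_\varepsilon$, and since $m(2)=\tfrac12$ forces $g(2)=0$ and $\Phi_\varepsilon(2)=0$, the Lebesgue measure is always present; the theorem amounts to counting roots $\beta>2$. (This is the role I expect the auxiliary function $\psi^\varepsilon$ of Section \ref{sec_psi} to play.)

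Next I would record two soft facts about $m$. First, Parry's formula exhibits $h_\beta$ as a nonnegative combination of indicators $\mathbf 1_{[0,\,T_\beta^n 1)}$, hence $h_\beta$ is non-increasing; a non-increasing probability density on $[0,1]$ has mean $\le \tfrac12$, with equality only for the constant density, so $m(\beta) < \tfrac12$ and $g(\beta) > 0$ for every non-integer $\beta$. Second, for $\beta \ge 2$ one has $h_\beta \le \beta/(\beta-1) \le 2$, and a non-increasing density bounded by $2$ has mean $\ge \tfrac14$; hence $g(\beta) \le 2$ and $F(g(\beta)) \le M := \max_{[0,2]} F < \infty$. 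Consequently every root of $\Phi_\varepsilon$ satisfies $\beta - 2 = \varepsilon F(g(\beta)) \le \varepsilon M$, so all nontrivial acims come from $\beta \in (2,\,2+\varepsilon M]$, a neighbourhood of $2$ that shrinks with $\varepsilon$.

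The heart of the argument is the asymptotics of $g$ at $\beta = 2^+$. Writing $\delta = \beta-2$, the orbit of the endpoint satisfies $T_\beta^n 1 = \beta^{n-1}\delta$ for as long as this stays below $1$, i.e.\ for $n \lesssim \log_2(1/\delta)$, so each such term contributes $\beta^{-n} T_\beta^n 1 = \delta/\beta$ to the normalising integral. Summing the $\asymp \log_2(1/\delta)$ comparable terms and controlling the tail (once $T_\beta^n 1$ exceeds $1$) shows $\tfrac12 - m(\beta) \asymp \delta\,\lvert\log\delta\rvert$, whence
\[
g(2+\delta) = O\bigl(\delta\,\lvert\log\delta\rvert\bigr), \qquad \delta \to 0^+ .
\]
This is exactly the borderline rate that the hypothesis on $F$ is designed to kill: since $F(0)=0$ and $F'(x)=O(1/\lvert\log x\rvert)$, integration gives $F(g) = O(g/\lvert\log g\rvert)$, and substituting $g \asymp \delta\lvert\log\delta\rvert$ (so $\lvert\log g\rvert \sim \lvert\log\delta\rvert$) yields a constant $C$ and a threshold $\delta_0$ with $F(g(2+\delta)) \le C\,\delta$ for $0 < \delta \le \delta_0$. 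I expect this estimate, matching the logarithmic blow-up of $g$ against the logarithmic decay of $F'$, to be the main obstacle: it requires honest control of the $\beta$-orbit of $1$ and a clean cancellation of the two logarithmic factors.

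With this in hand both parts follow. For part (1), choose $\varepsilon^*_1 = \min(\delta_0/M,\, 1/(2C))$. If $\varepsilon < \varepsilon^*_1$ then any root has $\delta \le \varepsilon M \le \delta_0$, and there $\delta = \varepsilon F(g(2+\delta)) \le \varepsilon C\,\delta \le \tfrac12\delta < \delta$, a contradiction unless $\delta = 0$; so Lebesgue is the only acim. For part (2), note $g(\tfrac52) > 0$ is a fixed positive number, so $\Phi_\varepsilon(\tfrac52) = \tfrac12 - \varepsilon F(g(\tfrac52))$ is strictly decreasing in $\varepsilon$ and becomes negative once $\varepsilon$ is large; meanwhile $\Phi_\varepsilon(\beta) \to 1 > 0$ as $\beta \to 3^-$ because $g(3)=0$. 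Since $m$, and hence $\Phi_\varepsilon$, is continuous in $\beta$ (statistical stability of $\beta$-maps, or directly from Parry's series), the intermediate value theorem produces a root $\beta^* \in (\tfrac52, 3)$, i.e.\ a second acim equivalent to Lebesgue. Taking $\varepsilon^*_2 = \max\bigl(\varepsilon^*_1,\, 1/F(g(\tfrac52))\bigr)$ secures both $\Phi_\varepsilon(\tfrac52)<0$ for all $\varepsilon \ge \varepsilon^*_2$ and the asserted ordering $\varepsilon^*_2 \ge \varepsilon^*_1$.
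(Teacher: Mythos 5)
Your proposal is correct, and at its core it is the paper's own argument: your $\Phi_\varepsilon(\beta)=0$ is exactly the fixed-point equation for the auxiliary function $\psi^{\varepsilon}$ of Section \ref{sec_psi}; your estimate $\tfrac12-m(2+\delta)=O(\delta|\log\delta|)$ is part (1) of Lemma \ref{lem_psicont}, proved the same way (following the orbit of $1$ in Parry's formula, splitting off the $\asymp|\log\delta|$ initial terms and a geometric tail); your cancellation $F(g(2+\delta))\le C\delta$ is Lemma \ref{lem_lip}; and your part (2) is Proposition \ref{prop_strong} plus continuity (part (2) of Lemma \ref{lem_psicont}) and the intermediate value theorem. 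Still, two executional differences deserve mention, both in your favor. First, to rule out fixed points with $\beta$ far from $2$, the paper argues $\psi^{\varepsilon}(\beta)<\beta$ on $[2+\delta,\infty)$ via $F(1/E_{\mu_\beta}-2)\le F(2/(\beta-1))\le F(2/(1+\delta))$, a chain that tacitly uses monotonicity of $F$ (true for the examples $F(x)=x^{2k}$, but not among the hypotheses of Theorem \ref{theo_main}); your a priori localization of \emph{all} roots in $(2,2+\varepsilon M]$ with $M=\max_{[0,2]}F$ needs only continuity of $F$, so it simultaneously handles the far region and removes that implicit assumption. Second, you get the Lipschitz bound by integrating $F'$ and using monotonicity of $x\mapsto x/|\log x|$, where the paper compares $F$ with the inverse function $G$ of $x\mapsto \tilde Kx(1+|\log x|)$; these are equivalent, yours being slightly more elementary. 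Conversely, the paper establishes $\psi^{\varepsilon}(\bar\beta)>\bar\beta$ for an \emph{arbitrary} non-integer $\bar\beta$, which yields the stronger Corollary \ref{cor_many} (arbitrarily many acims for large $\varepsilon$), while your fixed choice $\bar\beta=\tfrac52$ gives only the theorem as stated; and note that continuity of $\beta\mapsto E_{\mu_\beta}$ on $[\tfrac52,3]$ genuinely requires the statistical-stability input used by the paper (Keller's results), not a direct reading of Parry's series, since the individual terms $T_\beta^n(1)$ jump as functions of $\beta$ — so your parenthetical ``directly from Parry's series'' should be dropped.
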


An example of the function $F$ for which Theorem \ref{theo_main} holds is $F(x)=x^{2k}$, where $k \geq 1$ is an integer.

To discuss the stability of these invariant densities we performed a series of computer simulations. Based on the results of these computations we make two conjectures.

\begin{con} \label{con_th1}
In the setting of Theorem \eqref{theo_main2},
\begin{enumerate}
	\item In case of $\varepsilon=0$, the uniform density is stable with respect to the $BV$-norm $\|\cdot\|_{BV}=\text{var}(\cdot)+\int|\cdot|$.
	\item In case of $\varepsilon>0$, the uniform density is not stable with respect to the $L^1$-norm $\|\cdot\|_{L^1}=\int|\cdot|$ but there exists a different invariant density in $BV$ which is stable with respect to the $L^1$-norm.
\end{enumerate}
\end{con}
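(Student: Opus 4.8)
The plan is to treat the two parameter regimes separately, because the self-consistent transfer operator is \emph{linear} for $\varepsilon=0$ but genuinely \emph{nonlinear} for $\varepsilon>0$, and it is precisely this nonlinearity that drives the (in)stability. For $\varepsilon=0$ the dynamics is the doubling map for every $\mu$, so the self-consistent operator is just the Perron--Frobenius operator $\mathcal{L}_2$ of $x\mapsto 2x\bmod 1$. I would invoke the classical Lasota--Yorke inequality $\mathrm{var}(\mathcal{L}_2 f)\le\tfrac12\,\mathrm{var}(f)+C\|f\|_{L^1}$, which, together with $\|\mathcal{L}_2 f\|_{L^1}=\|f\|_{L^1}$ on densities, gives a spectral gap of $\mathcal{L}_2$ on $BV$. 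The constant density is then the unique $BV$ fixed point and is exponentially attracting in $\|\cdot\|_{BV}$; this is part (1) and should be routine.

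For $\varepsilon>0$ I would write the operator as $\mathcal{L}^{\varepsilon}f=\mathcal{L}_{\beta(f)}f$ with $\beta(f)=2+\varepsilon(1/E_f-2)$ and $E_f=\int_0^1 xf$, and linearise around a fixed density $h_*=h_{\beta_*}$ (either Lebesgue, $\beta_*=2$, or the second acim, $\beta_*=\beta^{*}>2$). Writing $f=h_*+u$ with $\int u=0$, the differential splits as $D\mathcal{L}^{\varepsilon}(h_*)u=\mathcal{L}_{\beta_*}u+\beta'(E_*)\big(\int x\,u\big)\,\partial_\beta\mathcal{L}_\beta h_*\big|_{\beta_*}$, i.e.\ the transfer operator at $\beta_*$ plus a rank-one feedback term coming from the dependence of $\beta$ on the first moment. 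The key reduction is that, modulo the fast-relaxing ``shape'' directions controlled by the spectral gap of $\mathcal{L}_{\beta_*}$, the slow dynamics is the one-dimensional map $\Phi^{\varepsilon}(E)=\psi(\beta(E))$ acting on the expectation, where $\psi(\beta)=E_{\mu_\beta}$ is closely related to the auxiliary function $\psi^{\varepsilon}$ of Section \ref{sec_psi}. Stability of $h_*$ then hinges on the size of $|(\Phi^{\varepsilon})'(E_*)|$ relative to $1$.

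To show the instability of Lebesgue I would use the Gelfond--Parry formula to write $\psi(\beta)=\tfrac12\big(\sum_n\beta^{-n}a_n^2\big)\big/\big(\sum_n\beta^{-n}a_n\big)$ with $a_n=T_\beta^{n}(1)$, and extract the behaviour as $\beta\to 2^{+}$: since the breakpoints $a_n$ grow geometrically until they reach order $1$, the numerator deficit is $O(\beta-2)$ while the denominator deficit carries a logarithmic factor, giving $\psi(\beta)=\tfrac12-c\,(\beta-2)\,|\log(\beta-2)|+o(\cdot)$ with $c>0$, hence the one-sided derivative $\partial_\beta\psi(2^{+})=-\infty$. As $\beta'(1/2)=-4\varepsilon\neq 0$, the map $\Phi^{\varepsilon}$ has one-sided derivative $+\infty$ as $E\to\tfrac12^{-}$, so the fixed point $E=\tfrac12$ is repelling in the mean; translated back, this yields the $L^1$-instability of the uniform density. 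Note this is exactly the non-differentiability of $\beta\mapsto\mathcal{L}_\beta$ at the integer value $\beta=2$ surfacing in the rank-one term.

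At the second fixed point $E^{*}\in(0,\tfrac12)$ (equivalently $\beta^{*}\in(2,3)$ non-integer) the Parry density depends smoothly on $\beta$, so $(\Phi^{\varepsilon})'(E^{*})$ is finite; I would aim to show $|(\Phi^{\varepsilon})'(E^{*})|<1$ and pair this with a uniform Lasota--Yorke bound for $\mathcal{L}_\beta$ with $\beta$ near $\beta^{*}$ to obtain contraction on the complementary subspace, giving $L^1$-stability of the second invariant density. The genuine obstacle is the reduction itself: the operator lives on infinite-dimensional $BV$, is nonlinear, and is non-differentiable in $\beta$ at integer $\beta$ — precisely where the instability occurs — so neither a smooth centre-manifold argument nor the implicit function theorem applies near Lebesgue. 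Making rigorous the separation between the fast relaxation of the density's shape (spectral gap of $\mathcal{L}_{\beta_*}$) and the slow, and here singular, evolution of its mean is the heart of the matter, and is why the statement is offered as a conjecture rather than a theorem.
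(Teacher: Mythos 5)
You should first be aware that the statement you are proving is stated as a \emph{conjecture}: the paper offers no proof of it. Its support consists of the numerical experiments of Section \ref{sec_num} (iteration of the exactly representable self-consistent transfer operator on pools of random step densities, Tables \ref{tab_x0}--\ref{tab_x0c}), and the concluding remarks explain why a rigorous argument is currently out of reach: for $\varepsilon>0$ one must control products $P^{\varepsilon}_{f_0}P^{\varepsilon}_{f_1}\cdots P^{\varepsilon}_{f_{n-1}}$ of \emph{different} operators along a nonlinear trajectory, and the perturbative strategy of the smooth coupled-map literature breaks down because the stepwise maps $T^{\varepsilon}_{\mu}$ are discontinuous. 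Your part (1) is correct and coincides with the paper's own remark that the $\varepsilon=0$ case is the classical spectral gap of the doubling map on $BV$.

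The gaps are in part (2), and they are concrete. (i) Your reduction of the stability question to the one-dimensional map $\Phi^{\varepsilon}(E)$ acting on the expectation is heuristic: after one step the new mean $\int_0^1 x\,\bigl(\mathcal{L}_{\beta(f)}f\bigr)(x)\,\mathrm{d}x$ depends on the whole density $f$, not on $E_f$ alone, so $\Phi^{\varepsilon}$ is not the induced dynamics of the mean; turning ``fast shape relaxation plus slow mean'' into a theorem is precisely the missing center-manifold-type argument, which is blocked by the very non-differentiability you point out. You flag this honestly, but it is not a technicality to be deferred --- it is the whole content of the conjecture. (ii) The asymptotics $E_{\mu_\beta}=\tfrac12-c\,(\beta-2)|\log(\beta-2)|+o(\cdot)$ with a definite constant $c>0$ is stronger than anything established: Lemma \ref{lem_psicont} gives only an \emph{upper} bound of that order, and Lemma \ref{lem_id} gives a superlinear \emph{lower} bound only along the special sequence $\beta_k$ solving $\beta_k^{k}(\beta_k-2)=1$; in between these parameters the difference quotient could a priori remain bounded, so the claimed one-sided derivative $-\infty$ of $\beta\mapsto E_{\mu_\beta}$ at $2^{+}$ (hence the repulsion of $E=\tfrac12$ even at the heuristic level) is unproven. (iii) At the second fixed point you assert that the Parry density depends smoothly on $\beta$ for non-integer $\beta$; this is unjustified --- the branch points of $T_\beta$ move with $\beta$, linear response typically fails for such piecewise expanding families, and the paper (via Keller's stability results) only has log-Lipschitz continuity of $\beta\mapsto E_{\mu_\beta}$ away from integers (Lemma \ref{lem_psicont}, part 2). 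So $(\Phi^{\varepsilon})'(E^{*})$ may not exist, and even granting existence, $|(\Phi^{\varepsilon})'(E^{*})|<1$ for all $\varepsilon>0$ is not a step one can ``aim to show'' by these means: under your (unproven) reduction it \emph{is} the conjectured stability. In short, your program is a reasonable reading of the mechanism that the paper's numerics suggest, but each of its three pillars is currently unsupported, which is exactly why the paper states the result as a conjecture rather than a theorem.
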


Part (1) of this conjecture is the stability of Lebesgue measure under the doubling map, which is well known, we included it in the conjecture to highlight the bifurcation phenomenon.  

\begin{con} \label{con_th2}
	In the setting of Theorem \eqref{theo_main}, the uniform density is stable with respect to the $BV$-norm for all $\varepsilon > 0$. Other acims are unstable with respect to the $BV$-norm. 
\end{con}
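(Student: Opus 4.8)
The plan is to recast Conjecture \ref{con_th2} as a statement about the \emph{self-consistent transfer operator} and to read off the stability of each acim from the spectrum of its linearisation. Writing densities $h \geq 0$ with $\int_0^1 h = 1$, set $E_h = \int_0^1 x\,h(x)\,dx$ and $\beta(h) = 2 + \varepsilon F\!\left(\tfrac{1}{E_h} - 2\right)$, and let $L_\beta$ denote the Perron--Frobenius operator of the $\beta$-map $x \mapsto \beta x \bmod 1$. The dynamics on densities is then the \emph{nonlinear} operator $\mathcal{L}_\varepsilon h = L_{\beta(h)}h$, whose fixed points are exactly the acims produced in Theorem \ref{theo_main}; stability in $BV$ means the fixed point attracts a $BV$-neighbourhood under iteration of $\mathcal{L}_\varepsilon$. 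I would establish attraction or repulsion by linearising $\mathcal{L}_\varepsilon$ at each acim and invoking a spectral-gap plus invariant-manifold argument. This is necessarily a program rather than a finished proof, since the conjecture is precisely the piecewise-smooth stability question flagged as open in the introduction.

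For the uniform density $h \equiv 1$ (the Lebesgue acim, $\beta = 2$) the decisive observation is that the hypothesis $F'(x) = O(1/|\log x|)$ forces $F'(0) = 0$, so that $F(s) = o(s)$ as $s \to 0$. Perturbing $h = 1 + v$ with $\int_0^1 v = 0$ gives $E_h = \tfrac12 + \langle x, v\rangle$ and hence $\beta(h) - 2 = \varepsilon F(-4\langle x,v\rangle + \dots) = o(\|v\|)$: the self-consistency coupling \emph{vanishes to first order} at the symmetric state. Consequently the Fréchet derivative is simply $D\mathcal{L}_\varepsilon(1)v = L_2 v$, the doubling-map operator, independently of $\varepsilon$. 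Since $L_2$ has a spectral gap on the zero-average subspace of $BV$ (the doubling-map Lasota--Yorke inequality gives contraction there), a standard contraction/stable-manifold estimate for a nonlinear map with uniformly contracting linearisation and $C^1$-small nonlinearity yields local $BV$-stability of $h \equiv 1$ for every $\varepsilon > 0$, which is the first assertion of the conjecture.

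For a second acim $h^* = h_{\beta^*}$ (with $\beta^* > 2$ non-integer and $e^* = E_{h^*} < \tfrac12$) the coupling no longer decouples: $\beta'(e^*) = -\varepsilon F'(1/e^* - 2)/(e^*)^2 \neq 0$, and linearising produces a rank-one correction to the $\beta^*$-map operator,
\[
D\mathcal{L}_\varepsilon(h^*)v = L_{\beta^*}v + \beta'(e^*)\,\big(\partial_\beta L_\beta\, h^*\big)\big|_{\beta^*}\,\langle x, v\rangle .
\]
Since $L_{\beta^*}$ itself has a spectral gap, I would apply a finite-rank spectral-perturbation criterion to show that this rank-one term pushes an eigenvalue of $D\mathcal{L}_\varepsilon(h^*)$ strictly outside the unit circle, producing an unstable direction and hence $BV$-instability. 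As the perturbation is aligned with the \emph{mean} functional $\langle x,\cdot\rangle$, the instability should ultimately be governed by the derivative of the scalar self-consistency map at $e^*$ together with the transverse spectral data of $L_{\beta^*}$.

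I expect the main obstacle to be exactly the difficulty named in the introduction: neither $\beta \mapsto L_\beta$ nor the Parry density $h_\beta$ is continuous in the $BV$-norm, because the jumps of $h_\beta$ sit at the orbit $\{T_\beta^n 1\}$ and move (and appear or disappear) as $\beta$ varies. Thus the derivative $\partial_\beta L_\beta\, h^*$ and the entire perturbation calculus must be justified in the weaker $(BV, L^1)$ setting via Keller--Liverani-type stability and linear-response machinery for piecewise expanding maps, while the passage from the linearised to the nonlinear dynamics requires uniform Lasota--Yorke bounds along $\mathcal{L}_\varepsilon$-orbits. Establishing this differentiable structure for the self-consistent operator is the crux; once it is in place, the stability of $h \equiv 1$ reduces to the doubling-map spectral gap and the instability of $h^*$ to a single sign computation for the perturbed leading eigenvalue.
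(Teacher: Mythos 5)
First, a point of calibration: the paper does not prove this statement at all --- it is Conjecture \ref{con_th2}, supported only by the simulations of Section \ref{sec_num2} (Tables \ref{tab_x2}, Figure \ref{fig_eps2}) and the heuristic discussion in the concluding remarks --- so your proposal is not being measured against a proof but against numerical evidence. Judged on its own terms, your program has a genuine gap exactly at the step you call ``decisive''. The hypotheses on $F$ do give $\beta(1+v)-2=\varepsilon F\bigl(\tfrac{1}{E_{1+v}}-2\bigr)=o(\|v\|_{BV})$, but smallness of $\beta-2$ does \emph{not} make the operator defect small in $BV$. Writing $P_\beta$ for the transfer operator \eqref{eq_pfbeta} (your $L_\beta$), for $\beta\in(2,3)$ one has
\[
P_\beta 1=\tfrac{3}{\beta}\,\mathbf{1}_{[0,\beta-2)}+\tfrac{2}{\beta}\,\mathbf{1}_{[\beta-2,1]},
\qquad\text{hence}\qquad
\|P_\beta 1-1\|_{BV}\ \ge\ \operatorname{var}(P_\beta 1)=\tfrac{1}{\beta}\ \ge\ \tfrac13
\]
for \emph{every} $\beta\neq 2$, no matter how close to $2$. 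Since $F>0$ off zero, any perturbation $v$ with $\langle x,v\rangle\neq 0$ forces $\beta(1+v)>2$ strictly, so
\[
\bigl\|\mathcal{F}_\varepsilon(1+v)-1-P_2v\bigr\|_{BV}\ \ge\ \operatorname{var}\bigl(P_{\beta(1+v)}1\bigr)-\operatorname{var}\bigl(P_{\beta(1+v)}v-P_2v\bigr)\ \ge\ \tfrac13-C\|v\|_{BV},
\]
which does not tend to $0$ as $\|v\|_{BV}\to 0$. Thus $\mathcal{F}_\varepsilon$ is not Fr\'echet differentiable at $h\equiv 1$ in $BV$ --- it is not even continuous there --- and the identity $D\mathcal{F}_\varepsilon(1)=P_2$, on which your contraction/stable-manifold argument rests, is false. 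What your expansion genuinely shows is that the defect $P_{\beta}1-1$ is $O(\beta-2)=o(\|v\|)$ \emph{in $L^1$}; your linearisation is valid only as a map into $L^1$, so the argument can at best deliver $L^1$-stability, not the $BV$-stability asserted in the conjecture.

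Worse, the same jump mechanism suggests the first half of Conjecture \ref{con_th2} is itself dubious as literally stated, so no correct proof of it should be expected. Whenever $E_{f_t}\neq\tfrac12$ (hence $\beta_t>2$), the iterate $f_{t+1}=P_{\beta_t}f_t$ acquires a jump of height $f_t(1^-)/\beta_t$ at the point $\beta_t-2$ (barring exact cancellation with images of jumps of $f_t$). If $\operatorname{var}(f_t-1)\to 0$, then $\|f_t-1\|_\infty\to 0$, so this new jump has height tending to $\tfrac12$ while all jumps of $f_{t+1}$ must tend to $0$ --- a contradiction unless $\beta_t=2$, i.e.\ $E_{f_t}=\tfrac12$ \emph{exactly}, for all large $t$; and an entire $BV$-neighbourhood of $1$ cannot be driven onto that codimension-one set in finitely many steps. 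So along orbits that never hit $\{E=\tfrac12\}$ exactly, $\operatorname{var}(f_t-1)$ stays bounded away from zero: the uniform density can attract in $L^1$ but not in $BV$. The vanishing of $\overline{\operatorname{var}}$ in Tables \ref{tab_x2} is consistent with a finite-precision artifact: once $|1/E-2|$ is small, $\varepsilon(1/E-2)^{2k}$ falls below machine epsilon, the computed slope is exactly $2$, and from then on the pure doubling map contracts the variation; note that for $F(x)=x$, where no such underflow occurs at comparable scales, the paper's own tables show $\overline{\operatorname{var}}$ settling near $1$, not $0$. This matches the paper's concluding remark that ``in full generality only stability in the $L^1$-sense is to be expected.'' I would therefore redirect your program at $L^1$-stability, where your second paragraph is essentially the right strategy. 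Your treatment of the nontrivial acims suffers from the same lack of $BV$-differentiability (which you acknowledge, but only as a technical obstacle rather than as something that invalidates the linearisation), and in addition the claim that the rank-one term pushes an eigenvalue outside the unit circle is asserted, not computed: no sign analysis is offered, so even granting the Keller--Liverani machinery the instability conclusion remains open.
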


\section{The auxiliary function $\psi^{\varepsilon}$} \label{sec_psi}

The aim of this section is to give the definition and discuss some properties of an auxiliary function on which the proofs of Theorems \eqref{theo_main2} and \eqref{theo_main} rely. Let 
\begin{equation}
T_{\beta}(x)= \beta x \mod 1, \quad x \in [0,1]
\end{equation}
such that $\beta > 1$, and denote by $\mu_\beta$ the unique acim of the system. By the classical results of \cite{renyi1957representations} we in fact know that such a measure exists, and it is equivalent to the Lebesgue measure. Remember that by our notation \eqref{eq_expected}
\[
E_{\mu_{\beta}}=\int_0^1 x \text{ d}\mu_{\beta}(x).
\]
Define $\psi^{\varepsilon}: (1,\infty) \to \mathbb{R}$ as
\begin{equation} \label{eq_psi}
\psi^{\varepsilon}(\beta)=2+\varepsilon F\left(\frac{1}{E_{\mu_{\beta}}}-2\right).
\end{equation}
Suppose there exists a $\bar{\beta}$ such that $\psi^{\varepsilon}(\bar{\beta})=\bar{\beta}$. Notice that in this case $\mu_{\bar{\beta}}$ is an invariant measure of the self-consistent system \eqref{eq_selfc}. Indeed, 
\[
\bar{\beta}=2+\varepsilon F\left(\frac{1}{E_{\mu_{\bar{\beta}}}}-2\right) \Rightarrow T_{\mu_{\bar{\beta}}}(x)=\bar{\beta}x \mod 1,
\]
and this implies that
\[
(T_{\mu_{\bar{\beta}}})_*\mu_{\bar{\beta}}=\mu_{\bar{\beta}}.
\]
This shows that every fixed point of $\psi^{\varepsilon}$ gives rise to an absolutely continuous invariant measure of \eqref{eq_selfc}. Moreover, if $F \geq 0$, the absolutely continuous invariant measures of \eqref{eq_selfc} and the fixed points of $\psi^{\varepsilon}$  are in one--to--one correspondence. So it suffices to study number of fixed points of the function $\psi^{\varepsilon}$ to prove our Theorems \ref{theo_main2} and \ref{theo_main}. 

We now state a lemma implying regularity properties of $\psi^{\varepsilon}$.

\begin{lem} \label{lem_psicont}
	1. Let $2 \leq \beta < 3$. There exists a constant $K > 0$ such that
	\[
	|E_{\mu_{\beta}}-E_{\mu_{2}}| \leq K |\beta-2|(1+|\log|\beta-2||).
	\]
	2. Let $2 < a_0 \leq a_1$. There exists a constant $K(a_0,a_1) > 0$ such that for all $\beta,\beta' \in [a_0,a_1]$,
	\[
	|E_{\mu_{\beta}}-E_{\mu_{\beta'}}| \leq K(a_0,a_1) |\beta-\beta'|(1+|\log|\beta-\beta'||).
	\]
\end{lem}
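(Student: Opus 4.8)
The plan is to reduce everything to the explicit Gelfond--Parry--R\'enyi formula for the invariant density of the $\beta$-map \cite{parry1960beta} and then to estimate, term by term, how the resulting series depend on $\beta$. Writing $\xi_n(\beta)=T_\beta^n(1)$ for the orbit of the right endpoint, the classical formula gives
\[
\frac{d\mu_\beta}{d\lambda}(x)=\frac{1}{F(\beta)}\sum_{n\ge0}\beta^{-n}\mathbf 1_{[0,\xi_n(\beta))}(x),\qquad F(\beta)=\sum_{n\ge0}\beta^{-n}\xi_n(\beta).
\]
Integrating $x$ against this density term by term yields the closed form
\[
E_{\mu_\beta}=\frac{1}{2F(\beta)}\sum_{n\ge0}\beta^{-n}\xi_n(\beta)^2 .
\]
Thus both numerator and denominator are explicit power series in $\beta^{-1}$ whose coefficients are the orbit points $\xi_n(\beta)\in[0,1)$, and it suffices to control the modulus of continuity of $\beta\mapsto\xi_n(\beta)$ and of the two series, then pass to the quotient, using that $F$ is bounded below away from $0$ uniformly on the relevant $\beta$-range.

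The mechanism producing the logarithmic factor, and the organizing principle of the estimate, is a head--tail splitting of these series at an index $N\asymp|\log|\beta-\beta_0||/\log\beta_0$, where $\beta_0=2$ in part 1 and $\beta_0=\beta'$ in part 2. As long as the orbits of $1$ under $T_\beta$ and $T_{\beta_0}$ stay in the same branches, the recursion $\xi_{n+1}=\beta\xi_n\bmod 1$ gives $\xi_{n+1}(\beta)-\xi_{n+1}(\beta_0)=\beta(\xi_n(\beta)-\xi_n(\beta_0))+(\beta-\beta_0)\xi_n(\beta_0)$ away from breakpoints, whence $|\xi_n(\beta)-\xi_n(\beta_0)|\le\frac{\beta^n-1}{\beta-1}|\beta-\beta_0|$. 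Weighting by $\beta^{-n}$, each such coherent term contributes $O(|\beta-\beta_0|)$, so the first $N$ terms contribute $O(|\beta-\beta_0|\,|\log|\beta-\beta_0||)$; for the tail $n>N$ the trivial bound $\xi_n\in[0,1)$ together with the geometric weight gives $\sum_{n>N}\beta^{-n}=O(\beta^{-N})=O(|\beta-\beta_0|)$. In part 1 the bookkeeping is especially transparent: since $T_2(1)=0$ one has $\xi_n(2)=0$ for $n\ge1$ and $\xi_1(\beta)=\beta-2$, so the orbit grows geometrically, $\xi_n(\beta)\approx\beta^{n-1}(\beta-2)$, until step $N$, and already the denominator satisfies $F(\beta)-1\approx c(\beta-2)|\log(\beta-2)|$, exhibiting the claimed modulus. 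The same computation shows the numerator differs from its value at $\beta_0$ only by $O(|\beta-\beta_0|)$, so the log factor is carried entirely by $F$.

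The hard part will be that $\beta\mapsto\xi_n(\beta)$ is only piecewise smooth: it jumps whenever some earlier orbit point $\xi_k(\beta)$ crosses a discontinuity of $T_\beta$ (equivalently $\beta\,\xi_k(\beta)\in\mathbb Z$), and at such a crossing the clean coherent recursion breaks and $|\xi_n(\beta)-\xi_n(\beta_0)|$ may jump to order $1$. A single jump occurring at a late index $n>N$ is harmless, being absorbed into the geometric tail; the real difficulty is to rule out, or to pay for, branch crossings occurring early, at some $n_0<N$, since such a crossing injects a difference of size $\asymp\beta^{-n_0}$, potentially much larger than the target $O(|\beta-\beta_0|\,|\log|\beta-\beta_0||)$. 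I would resolve this by a quantitative control of how close the orbit of $1$ comes to the breakpoints over $[\beta_0,\beta]$, bounding the number and size of the discontinuities of $\beta\mapsto\xi_n(\beta)$ there and showing their $\beta^{-n}$-weighted contribution is again $O(|\beta-\beta_0|\,(1+|\log|\beta-\beta_0||))$; I expect this crossing analysis to be the delicate heart of the argument. As an alternative that bypasses pointwise orbit coherence altogether, one can set up the transfer operators $L_\beta$ on $BV$, verify a uniform Lasota--Yorke inequality and the weak-norm bound $\|L_\beta-L_{\beta_0}\|_{BV\to L^1}\lesssim|\beta-\beta_0|$, and invoke the Keller--Liverani spectral stability theorem, whose conclusion delivers exactly the modulus $|\beta-\beta_0|(1+|\log|\beta-\beta_0||)$ for $\int x\,d\mu_\beta$; uniformity of the Lasota--Yorke constants over $[a_0,a_1]$, respectively down to $\beta=2$, then yields the constants $K(a_0,a_1)$ and $K$.
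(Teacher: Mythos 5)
Your treatment of part 1 is essentially the paper's own proof: the Parry series $h_\beta=\sum_n\beta^{-n}\mathbf 1_{[0,T_\beta^n(1))}$, the observation that the orbit of $1$ stays in the first branch (so $T_\beta^n(1)=\beta^{n-1}(\beta-2)$ exactly) up to time $N\asymp|\log(\beta-2)|/\log\beta$, and the head--tail split at $N$ giving $\int h_\beta-1=O\bigl((\beta-2)(1+|\log(\beta-2)|)\bigr)$. (The paper is even a bit lighter than your sketch: by the one-sided inequality $E_{\mu_\beta}\le E_{\mu_2}$ it only needs a lower bound on $E_{\mu_\beta}$, hence only the denominator, and never has to track the numerator.) For part 2 the paper does \emph{not} do any orbit analysis: it invokes the Keller--Liverani spectral stability machinery (its reference \cite{keller2008continuity}, Proposition 2 and Corollary 1), i.e.\ exactly your ``alternative'' route, and only on intervals $[a_0,a_1]$ with $a_0>2$.

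Your primary route for part 2 has a genuine gap, and it is worse than the ``delicate heart'' you flag: the plan as stated --- estimate the modulus of continuity of the two unnormalized series separately and then pass to the quotient using that $F$ is bounded below --- provably cannot work. At any parameter $\beta_c$ where the orbit of $1$ hits a branch point at an early time $n_0$ (such parameters exist in every interval $[a_0,a_1]$, with $n_0$ bounded), the functions $\beta\mapsto F(\beta)$ and $\beta\mapsto\int x h_\beta$ are genuinely \emph{discontinuous}: with the convention $T_\beta(j/\beta)=0$ the orbit terminates at $\beta_c$, while for $\beta\uparrow\beta_c$ the point $\xi_{n_0+1}(\beta)$ is near $1^-$ and its further orbit shadows the orbit of $1$, which yields the exact relation $F^-(\beta_c)=F(\beta_c)\bigl/\bigl(1-\beta_c^{-(n_0+1)}\bigr)$, a jump of size $\Theta(\beta_c^{-n_0})\gg|\beta-\beta'|(1+|\log|\beta-\beta'||)$. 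The numerator jumps by the \emph{same} scalar factor, so the ratio $E_{\mu_\beta}$ stays continuous; this cancellation through the normalization is the only reason the lemma is true, and separate term-by-term bounds on numerator and denominator are structurally blind to it. No ``quantitative control of how close the orbit comes to breakpoints'' can repair this, since the crossings really occur and really produce $\Theta(\beta^{-n_0})$ discrepancies in each series. (This also explains why the same elementary method \emph{does} work in part 1: near $\beta_0=2$ the orbit of $1$ starts at $\beta-2$ and cannot reach a breakpoint before time $N\asymp|\log(\beta-2)|$, so early decoherence is impossible there --- which is precisely why the paper splits the lemma into two parts proved by different methods.)

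Your fallback via Keller--Liverani is sound for part 2 and is what the paper does, but your final claim that uniformity of the Lasota--Yorke constants holds ``down to $\beta=2$,'' so that the same argument also yields part 1, is unjustified. As $\beta\downarrow2$ the rightmost branch of $T_\beta$ has length $(\beta-2)/\beta\to0$, and the standard Lasota--Yorke/weak-norm constants for this family degenerate as the branch shrinks; uniformity on $[a_0,a_1]$ with $a_0>2$ is exactly what one can verify, and exactly why the paper restricts the spectral argument to such intervals and handles the comparison point $\beta_0=2$ by the explicit computation instead.
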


\begin{proof}
As proved in \cite{parry1964representations}, the \emph{unnormalized} invariant density of $T_{\beta}$ can be given by the formula
\begin{equation} \label{eq_hbeta}
h_{\beta}(x)=\sum_{n=0}^{\infty}\frac{1}{\beta^n}\mathbf{1}_{[0,T^n_{\beta}(1))}(x).
\end{equation}
Thus
\begin{align}
\int_0^1h_{\beta}(x)\text{ d}x&=\sum_{n=0}^{\infty}\int_0^1\frac{1}{\beta^n}\mathbf{1}_{[0,T^n_{\beta}(1))}(x)\text{ d}x=\sum_{n=0}^{\infty}\frac{1}{\beta^n}T_{\beta}^n(1) \label{eq_normalizer} \\
\int_0^1xh_{\beta}(x)\text{ d}x&=\sum_{n=0}^{\infty}\int_0^1\frac{1}{\beta^n}x\mathbf{1}_{[0,T^n_{\beta}(1))}(x)\text{ d}x=\sum_{n=0}^{\infty}\frac{1}{2\beta^n}(T_{\beta}^n(1))^2. \label{eq_exp}
\end{align} 
Notice in particular that since $T_2$ has range $[0,1)$, we have $T^0_{2}(1)=1$ and $T^n_2(1)=0$ for all $n \geq 1$, hence $h_2(x)=1$. This implies that 
\[
\int_0^1h_{2}(x)\text{ d}x=1 \quad \text{and} \quad \int_0^1xh_{2}(x)\text{ d}x=\frac{1}{2}.
\]

By definition and the above observation, we have 
\[
E_{\mu_{\beta}}=\frac{1}{\int_0^1h_{\beta}(x)\text{ d}x}\int_0^1xh_{\beta}(x)\text{ d}x \quad \text{and} \quad E_{\mu_{2}}=\frac{1}{2}.
\]
We can deduce the following uniform bounds on $E_{\mu_{\beta}}$:
\begin{align} \label{eq_embound}
\frac{1}{4} \leq \frac{1}{2}\cdot \frac{\beta-1}{\beta}\leq E_{\mu_{\beta}} \leq \frac{1}{2},
\end{align}
where the last inequality is always strict if $\beta$ is not an integer. Indeed, 
\begin{align*}
\frac{1}{2}\frac{\sum_{n=0}^{\infty}\frac{1}{\beta^n}(T_{\beta}^n(1))^2}{\sum_{n=0}^{\infty}\frac{1}{\beta^n}T_{\beta}^n(1)} \leq \frac{1}{2},
\end{align*}
since $(T_{\beta}^n(1))^2 \leq T_{\beta}^n(1)$ for all $n$. In particular, if $\beta$ is an integer, $T_{\beta}^0(1)=1$ and $T_{\beta}^n(1)=0$ for all $n \geq 1$, hence the equality (and this is the only case that equality can occur). We get the lower bound from the fact $1 \leq h_{\beta} \leq \frac{\beta}{\beta-1}$ (including just the first term in the sum \eqref{eq_hbeta} versus all terms). 

In particular observe that
\begin{equation} \label{eq_expvalueineq}
E_{\mu_{\beta}} \leq E_{\mu_{2}}
\end{equation}
so $|E_{\mu_{\beta}}-E_{\mu_{2}}|=E_{\mu_{2}}-E_{\mu_{\beta}}$.

Let us write $h_{\beta}(x)=1+g_{\beta}(x)$ where 
\[
g_{\beta}(x)=\sum_{n=1}^{\infty}\frac{1}{\beta^n}\mathbf{1}_{[0,T^n_{\beta}(1))}(x).
\] 
Then
\[
E_{\mu_{\beta}}=\frac{\int_0^1 x(1+g_\beta(x)) \mathrm{d}x}{\int_0^1 1+g_\beta(x) \mathrm{d}x}=\frac{\frac12+\int_0^1 x g_\beta(x) \mathrm{d}x}{1+\int_0^1 g_\beta(x) \mathrm{d}x},
\]
so
\begin{align*}
E_{\mu_\beta}&\ge \frac{\frac12}{1+\int_0^1 g_\beta(x) \mathrm{d}x}\ge \frac12 \left( 1-\int_0^1 g_\beta(x) \mathrm{d}x\right) \Rightarrow E_{\mu_{2}}-E_{\mu_{\beta}} \leq \frac{1}{2}\int_0^1 g_\beta(x) \mathrm{d}x, 
\end{align*}
so it is enough to show
\begin{equation} \label{eq_loglip}
\int_0^1 g_\beta(x) \mathrm{d}x=\sum_{n=1}^{\infty}\frac{1}{\beta^n}T^n_{\beta}(1) \leq N\eta+\left(\frac{1}{\beta}\right)^N
\end{equation}
for $\eta=\beta-2$ and $N=\lceil \frac{\log \eta}{\log 1/\beta} \rceil$ to prove the claim of part (a). To show \eqref{eq_loglip}, we first note that
\begin{equation} \label{eq_loglip2}
\sum_{k=N+1}^{\infty}\frac{1}{\beta^k}= \frac{1}{\beta-1}\left(\frac{1}{\beta}\right)^N
\end{equation}
Now notice that if $T^{n-1}_{\beta}(1)=\beta^{n-2}(\beta-2) <\frac{1}{\beta}$, the images $T^{n}_{\beta}(1)$ fall under the first branch. By the choice of $N$, this is exactly the case for all $k=1,\dots,N$. So
\[
\frac{1}{\beta^n}T^n_{\beta}(1)=\frac{\beta^{n-1}(\beta-2)}{\beta^n} \leq \frac{1}{\beta}(\beta-2) \qquad \text{for }1 \leq n \leq N
\] 
implying
\begin{equation} \label{eq_loglip3}
\sum_{k=1}^{N}\frac{1}{\beta^k} \leq \frac{N}{\beta}(\beta-2)
\end{equation}
Putting together \eqref{eq_loglip2} and \eqref{eq_loglip3} we get \eqref{eq_loglip}.

Part 2 can be proved in an analogous way to \cite[Proposition 2]{keller2008continuity} as a consequence of \cite[Corollary 1]{keller2008continuity}.
\end{proof} 

This lemma has the following important corollary:

\begin{cor} \label{cor_psi}
	1. For $2 \leq \beta < 3$, there exists a constant $\tilde{K} > 0$ such that
	\[
	\frac{1}{E_{\mu_{\beta}}}-2 \leq \tilde{K} |\beta-2|(1+|\log|\beta-2||).
	\]
	2. $\psi^{\varepsilon}(\beta)$ is continuous for all $\beta \geq 2$.
\end{cor}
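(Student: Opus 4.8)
The two parts are tightly linked: part 1 is a direct algebraic translation of Lemma \ref{lem_psicont}, and part 2 combines part 1 with the continuity of the singular logarithmic modulus, the structure of $\psi^\varepsilon$ in \eqref{eq_psi}, and the smoothness of $F$.

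For part 1, the plan is to bound $\frac{1}{E_{\mu_\beta}}-2$ by relating it to $E_{\mu_2}-E_{\mu_\beta}$. Since $E_{\mu_2}=\frac12$, I would write
\[
\frac{1}{E_{\mu_\beta}}-2=\frac{1}{E_{\mu_\beta}}-\frac{1}{E_{\mu_2}}=\frac{E_{\mu_2}-E_{\mu_\beta}}{E_{\mu_\beta}E_{\mu_2}}.
\]
The numerator is exactly the quantity controlled by part 1 of Lemma \ref{lem_psicont}, namely $E_{\mu_2}-E_{\mu_\beta}=|E_{\mu_\beta}-E_{\mu_2}|\le K|\beta-2|(1+|\log|\beta-2||)$, using \eqref{eq_expvalueineq} to drop the absolute value. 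For the denominator I would invoke the uniform lower bound $E_{\mu_\beta}\ge \frac14$ from \eqref{eq_embound} together with $E_{\mu_2}=\frac12$, so $E_{\mu_\beta}E_{\mu_2}\ge \frac18$. Setting $\tilde K=8K$ then yields the claimed inequality. This step is routine; the only care needed is that the bounds be valid on the whole range $2\le\beta<3$, which \eqref{eq_embound} guarantees.

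For part 2, I would split the continuity claim at $\beta=2$ versus $\beta>2$. For $\beta>2$ I would argue that $\beta\mapsto E_{\mu_\beta}$ is continuous on any compact subinterval of $(2,\infty)$ by part 2 of Lemma \ref{lem_psicont} (the log-Lipschitz modulus $|\beta-\beta'|(1+|\log|\beta-\beta'||)\to 0$ as $\beta'\to\beta$), that $E_{\mu_\beta}$ stays bounded away from $0$ by \eqref{eq_embound}, and hence $\beta\mapsto \frac{1}{E_{\mu_\beta}}-2$ is continuous; composing with the continuous $F$ and scaling by $\varepsilon$ gives continuity of $\psi^\varepsilon$ there. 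For continuity at the endpoint $\beta=2$, I would use part 1: as $\beta\to 2+$, part 1 of this corollary forces $\frac{1}{E_{\mu_\beta}}-2\to 0$, since $|\beta-2|(1+|\log|\beta-2||)\to 0$. Because $F$ is continuous and $F(0)=0$, we get $F\!\left(\frac{1}{E_{\mu_\beta}}-2\right)\to F(0)=0$, so $\psi^\varepsilon(\beta)\to 2=\psi^\varepsilon(2)$, establishing right-continuity at $2$ (and $\beta=2$ is the left endpoint of the domain relevant here).

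The main obstacle, such as it is, is purely bookkeeping: ensuring the estimates from Lemma \ref{lem_psicont} are applied on the correct ranges (part 1 near $\beta=2$, part 2 on compact pieces of $(2,\infty)$) and that the denominators never degenerate. No genuinely new analytic input is required beyond the lemma and the hypotheses $F\in C^1$, $F(0)=0$; the corollary is essentially a clean repackaging of Lemma \ref{lem_psicont} into statements directly usable for counting fixed points of $\psi^\varepsilon$.
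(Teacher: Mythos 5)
Your proposal is correct and follows essentially the same route as the paper: part 1 is proved there by the identical identity $\frac{1}{E_{\mu_\beta}}-\frac{1}{E_{\mu_2}}=\frac{E_{\mu_2}-E_{\mu_\beta}}{E_{\mu_\beta}E_{\mu_2}}$ combined with \eqref{eq_embound}, giving $\tilde K=8K$, and part 2 is obtained exactly as you describe, from part 2 of Lemma \ref{lem_psicont} on compact subintervals of $(2,\infty)$ together with part 1 for (right-)continuity at $\beta=2$ via $F(0)=0$. Your write-up merely spells out details the paper leaves implicit; no discrepancy to report.
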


Indeed, by part (1) of Lemma \ref{lem_psicont} and \eqref{eq_embound}
\begin{align*}
\frac{1}{E_{\mu_{\beta}}}-\frac{1}{E_{\mu_2}}= \left| \frac{E_{\mu_{2}}-E_{\mu_{\beta}}}{E_{\mu_{\beta}}E_{\mu_{2}}}\right| \leq 8K |\beta-2|(1+|\log|\beta-2||).
\end{align*}
We note that part (2) of Lemma \ref{lem_psicont} serves only the purpose to draw the conclusion of part (2) of the above corollary.

We now outline our main idea behind the proofs of Theorems \ref{theo_main2} and \ref{theo_main}. First observe that $\psi^{\varepsilon}(2)=2$, since Lebesgue measure is an invariant measure of the doubling map. But since Lebesgue is invariant for any $\beta$-map where $\beta$ is an integer, we have $\psi^{\varepsilon}(k)=2$ for all $k \geq 2$, $k \in \mathbb{N}$. So if for some $\bar{\beta} \in (k,k+1)$ we have $\psi^{\varepsilon}(\bar{\beta}) > \bar{\beta}$, we can conclude that $\psi^{\varepsilon}$ has a fixed point $\beta^* \in (k,k+1)$. This implies that $\mu_{\beta^*}$ is an invariant measure of the self-consistent system \eqref{eq_selfc} that is equivalent, but not equal to Lebesgue. 

In the following sections we are going to prove Theorem \ref{theo_main} part (1) by showing that no such $\bar{\beta}$ exists for the stated values of $\varepsilon$, while we prove Theorem \ref{theo_main2} and Theorem \ref{theo_main} part (2) by showing the existence of a $\bar{\beta}$ such that $\psi^{\varepsilon}(\bar{\beta}) > \bar{\beta}$ in the settings considered.   

\section{Proof of Theorem \ref{theo_main2}} \label{sec_thm1}

In this section we consider the special case when $F$ is the identity. Now \eqref{eq_psi} takes the form
\[
\psi^{\varepsilon}(\beta)=2+\varepsilon \left( \frac{1}{E_{\mu_{\beta}}}-2\right).
\]
We are going to show that for any $\varepsilon > 0$ the map $\psi^{\varepsilon}$ has another fixed point in addition to $\beta=2$. This implies Theorem \ref{theo_main2}, as discussed in Section \ref{sec_psi}. According to our arguments in Section \ref{sec_psi}, it is more than enough to prove the following proposition:

\begin{prop} \label{prop_id}
Let $\varepsilon > 0$. For any $\delta > 0$ there exists a $\beta > 2$, $|\beta-2|<\delta$ such that
\begin{equation} \label{eq_idwant}
\psi^{\varepsilon}(\beta) > \beta. 
\end{equation}
\end{prop}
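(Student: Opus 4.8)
The plan is to reduce the inequality $\psi^{\varepsilon}(\beta) > \beta$ to a \emph{lower} bound on $E_{\mu_2} - E_{\mu_{\beta}}$ carrying a logarithmic factor. Writing $\eta = \beta - 2 > 0$, the desired inequality \eqref{eq_idwant} reads $\varepsilon\bigl(\frac{1}{E_{\mu_\beta}} - 2\bigr) > \eta$. Since $E_{\mu_2} = \tfrac12$, one has $\frac{1}{E_{\mu_\beta}} - 2 = \frac{2(E_{\mu_2} - E_{\mu_\beta})}{E_{\mu_\beta}}$, and the uniform bounds \eqref{eq_embound} give $\tfrac14 \le E_{\mu_\beta} \le \tfrac12$, so $\frac{1}{E_{\mu_\beta}} - 2 \ge 4(E_{\mu_2} - E_{\mu_\beta})$. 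Thus it suffices to show $E_{\mu_2} - E_{\mu_\beta} \ge c\,\eta\,|\log\eta|$ for some constant $c > 0$ and all small $\eta$: the factor $|\log\eta| \to \infty$ then forces $4c\varepsilon\,\eta|\log\eta| > \eta$ for $\eta$ small, which is exactly \eqref{eq_idwant}. Note that Corollary \ref{cor_psi} supplies the matching \emph{upper} bound $\lesssim \eta(1 + |\log\eta|)$; the heart of the proof is that this growth rate is genuinely attained from below.

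To obtain the lower bound I would return to the exact expressions from the proof of Lemma \ref{lem_psicont}. With $h_\beta = 1 + g_\beta$ and $A = \int_0^1 g_\beta$, $B = \int_0^1 x\,g_\beta$, one has $E_{\mu_\beta} = \frac{1/2 + B}{1 + A}$, hence
\[
E_{\mu_2} - E_{\mu_\beta} = \frac{A/2 - B}{1 + A} = \frac{1}{2(1 + A)}\sum_{n=1}^{\infty}\frac{T^n_\beta(1)\bigl(1 - T^n_\beta(1)\bigr)}{\beta^n},
\]
where I used $\tfrac{A}{2} - B = \tfrac12\sum_{n \ge 1}\beta^{-n}\bigl(T^n_\beta(1) - (T^n_\beta(1))^2\bigr)$ and that every summand is nonnegative since $T^n_\beta(1) \in [0,1)$. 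Because $A \le \frac{1}{\beta - 1} \le 1$ for $\beta \ge 2$, the prefactor $\frac{1}{2(1+A)}$ is bounded below by $\tfrac14$, so it remains only to bound the series from below.

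Here the key is the same \emph{first-branch} description of the orbit of $1$ used in the lemma: for $1 \le n \le N_0$ one has $T^n_\beta(1) = \beta^{n-1}\eta$, where $N_0$ is the largest integer with $\beta^{N_0 - 1}\eta \le \tfrac12$, i.e.\ $N_0 \approx \frac{\log(1/\eta)}{\log\beta}$ (the condition $\beta^{n-1}\eta \le \tfrac12$ keeps $T^{n-1}_\beta(1) < \tfrac{1}{\beta}$, so each iterate indeed uses the linear branch). For such $n$ the crucial cancellation $\frac{T^n_\beta(1)}{\beta^n} = \frac{\eta}{\beta}$ occurs, giving
\[
\frac{T^n_\beta(1)\bigl(1 - T^n_\beta(1)\bigr)}{\beta^n} = \frac{\eta}{\beta}\bigl(1 - \beta^{n-1}\eta\bigr) \ge \frac{\eta}{2\beta}.
\]
Summing over $1 \le n \le N_0$ yields $\sum \ge \frac{\eta N_0}{2\beta} \gtrsim \eta\,|\log\eta|$, since $N_0 \gtrsim |\log\eta|$ while $\log\beta$ stays bounded as $\beta \to 2^+$. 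Combined with the previous display this gives $E_{\mu_2} - E_{\mu_\beta} \ge c\,\eta|\log\eta|$, as required.

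Finally I would chain the estimates: $\psi^\varepsilon(\beta) - \beta \ge 4c\,\varepsilon\,\eta|\log\eta| - \eta = \eta\bigl(4c\varepsilon|\log\eta| - 1\bigr)$, which is strictly positive once $\eta < e^{-1/(4c\varepsilon)}$; shrinking $\eta$ further guarantees $|\beta - 2| = \eta < \delta$. The main obstacle is precisely this logarithmic lower bound: one must observe that in the linear regime the weights $\beta^{-n}$ and the orbit values $\beta^{n-1}\eta$ combine to the constant multiple $\eta/\beta$, so that the $\Theta(|\log\eta|)$ iterates before the orbit leaves $[0,\tfrac12]$ each contribute $\Theta(\eta)$ and accumulate to $\Theta(\eta|\log\eta|)$. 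Some care is also needed at the step where the orbit leaves the linear regime, which is why I cut the sum at $N_0$ rather than at the full escape time $N$ used in the lemma.
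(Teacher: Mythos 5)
Your proposal is correct, and it takes a genuinely different route from the paper's. The paper reduces Proposition \ref{prop_id} to Lemma \ref{lem_id} (failure of Lipschitz continuity along a \emph{sequence}) and proves that lemma by constructing explicit parameters $\beta_k \to 2^+$ for which the orbit of $1$ rides the first branch for $k$ steps and then lands exactly on $0$, i.e.\ $\beta_k^{k}(\beta_k-2)=1$; for these special values the Parry sums \eqref{eq_normalizer}--\eqref{eq_exp} are finite and evaluated in closed form, and the local Lipschitz constant $c_{\beta_k}$ is read off explicitly and shown to blow up (it is of order $k \asymp |\log(\beta_k-2)|$). You instead prove a \emph{uniform} lower bound: the identity $E_{\mu_2}-E_{\mu_\beta} = \frac{1}{2(1+A)}\sum_{n\ge 1}\beta^{-n}T_\beta^n(1)\bigl(1-T_\beta^n(1)\bigr)$ with termwise nonnegative summands lets you truncate at the time $N_0 \asymp |\log\eta|$ when the orbit leaves $[0,\tfrac12]$, and the same cancellation $\beta^{-n}\cdot\beta^{n-1}\eta = \eta/\beta$ that the paper exploits then gives $E_{\mu_2}-E_{\mu_\beta} \ge c\,\eta|\log\eta|$ for \emph{every} $\beta \in (2,2+\delta_0)$, not just along a sequence. (Your induction keeping the orbit on the first branch, the bound $A \le \frac{1}{\beta-1}\le 1$, and the final chain $\eta(4c\varepsilon|\log\eta|-1)>0$ all check out.) The two arguments share the same two ingredients --- Parry's formula \eqref{eq_hbeta} and the first-branch description of the orbit of $1$ --- but the packaging differs: your version yields the stronger conclusion that $\psi^{\varepsilon}(\beta)>\beta$ on a whole right neighborhood of $2$, and in particular that the log-Lipschitz upper bound of Lemma \ref{lem_psicont} and Corollary \ref{cor_psi} is sharp at $\beta=2$ for all nearby $\beta$, which is the conceptual heart of the matter; what the paper's special sequence buys in exchange is exactness (terminating sums, no tail estimates, an explicit constant $c_{\beta_k}$), at the cost of only establishing the blow-up along that sequence.
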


This is the consequence of the following lemma, which claims that the log-Lipschitz continuity of $\beta \mapsto E_{\mu_{\beta}}$ at $\beta=2$ stated in Lemma~\ref{lem_psicont} cannot be improved to Lipschitz continuity:

\begin{lem} \label{lem_id}
There exists a sequence $\beta_k \to 2+$ such that
\[
|E_{\mu_{\beta_k}}-E_{\mu_{2}}| > c_{\beta_k}|\beta_k-2|
\]
for some $c_{\beta_k} > 0$ such that $\lim_{\beta_k \to 2+}c_{\beta_k}=\infty$.
\end{lem}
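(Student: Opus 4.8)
The plan is to exhibit an explicit sequence $\beta_k \to 2+$ along which the quantity $\int_0^1 g_{\beta_k}(x)\,\mathrm{d}x = \sum_{n\geq 1} \beta_k^{-n} T_{\beta_k}^n(1)$ is bounded below by something of order $|\beta_k - 2|\cdot|\log|\beta_k-2||$, rather than merely $|\beta_k-2|$. Since the proof of Lemma~\ref{lem_psicont} shows $E_{\mu_2} - E_{\mu_\beta}$ is comparable to $\frac12 \int_0^1 g_\beta$, a matching lower bound for this sum will force $|E_{\mu_{\beta_k}} - E_{\mu_2}|$ to exceed $c_{\beta_k}|\beta_k - 2|$ with $c_{\beta_k} \to \infty$. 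So the whole lemma reduces to a lower estimate on the tail $\sum_{n\geq 1}\beta^{-n}T_\beta^n(1)$ near $\beta = 2$.

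First I would recall the key structural fact already used in the upper bound: as long as $T_\beta^{n-1}(1) = \beta^{n-2}(\beta-2) < 1$, the orbit point $T_\beta^n(1)$ stays on the first branch and equals $\beta^{n-1}(\beta-2)$. With $\eta = \beta - 2$ and $N = \lceil \log\eta / \log(1/\beta)\rceil$ (the same $N$ as in the excerpt), these ``pre-escape'' iterates satisfy $T_\beta^n(1) = \beta^{n-1}\eta$ for $1 \leq n \leq N$, so each summand is $\beta^{-n}T_\beta^n(1) = \beta^{-1}\eta$ exactly. Summing these $N$ terms gives $\sum_{n=1}^N \beta^{-n}T_\beta^n(1) = \tfrac{N}{\beta}\eta$, which is not an inequality but an equality on this range. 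This is the crucial point: the same computation that produced the upper bound $\tfrac{N}{\beta}(\beta-2)$ in \eqref{eq_loglip3} simultaneously gives a lower bound of the same order, because all these terms are nonnegative and we are free to discard the remaining $n > N$ terms.

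The plan is therefore to write
\[
\int_0^1 g_\beta(x)\,\mathrm{d}x = \sum_{n=1}^\infty \frac{1}{\beta^n}T_\beta^n(1) \geq \sum_{n=1}^N \frac{1}{\beta^n}T_\beta^n(1) = \frac{N}{\beta}\eta,
\]
and then to note that $N = \lceil \log\eta / \log(1/\beta)\rceil$ grows like $|\log\eta|/\log\beta \sim |\log(\beta-2)|/\log 2$ as $\beta \to 2+$. To make this fully rigorous and avoid fussing over the ceiling and over $\log\beta \to \log 2$, I would choose a convenient subsequence $\beta_k \to 2+$ — for instance $\beta_k = 2 + 2^{-k}$, so that $\eta_k = 2^{-k}$ and $N_k$ is essentially $k$ — along which $N_k \to \infty$ is transparent. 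Combining with $E_{\mu_2} - E_{\mu_{\beta_k}} \geq \tfrac12\int_0^1 g_{\beta_k}$ (a lower bound companion to the upper bound in the Lemma~\ref{lem_psicont} proof, obtainable from the same expansion of $E_{\mu_\beta}$) yields $|E_{\mu_{\beta_k}} - E_{\mu_2}| \geq c_{\beta_k}|\beta_k - 2|$ with $c_{\beta_k}$ of order $N_k \to \infty$.

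The main obstacle I anticipate is not the lower bound on the $g_\beta$-integral, which falls out of the existing first-branch computation, but rather transferring it to a clean lower bound on $E_{\mu_2} - E_{\mu_\beta}$ itself. The proof of Lemma~\ref{lem_psicont} only derives the one-sided estimate $E_{\mu_2} - E_{\mu_\beta} \leq \tfrac12\int_0^1 g_\beta$ via the denominator inequality $\tfrac12/(1+\int g_\beta) \geq \tfrac12(1 - \int g_\beta)$; for the reverse direction I need to control the numerator term $\int_0^1 x\,g_\beta(x)\,\mathrm{d}x$ and the second-order correction in $\tfrac{1}{1+\int g_\beta}$. Since each indicator $\mathbf{1}_{[0,T_\beta^n(1))}$ is supported near $0$ for the relevant small iterates, $\int_0^1 x\,g_\beta$ is of strictly smaller order (roughly $\sum \beta^{-n}(T_\beta^n(1))^2$, i.e.\ a factor $T_\beta^n(1)$ smaller), so $E_{\mu_2}-E_{\mu_\beta}$ and $\tfrac12\int g_\beta$ agree to leading order; making this quantitative enough to preserve the $N\eta$ growth is the one genuinely careful estimate, but it is routine given that the discarded terms are provably lower order.
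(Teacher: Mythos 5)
Your proposal is correct, but it takes a genuinely different route from the paper. The paper's proof of Lemma~\ref{lem_id} constructs a \emph{special} sequence $\beta_k$, defined implicitly by $\beta_k^{k}(\beta_k-2)=1$, so that the orbit of $1$ climbs the first branch for $k$ steps and then lands exactly on $1\equiv 0$; Parry's series for $\int_0^1 h_{\beta_k}$ and $\int_0^1 x\,h_{\beta_k}$ then become \emph{finite} sums evaluated in closed form, and the constant
\[
c_{\beta_k}=\left|\frac{\frac{\beta_k^k-1}{\beta_k-1}(\beta_k-2)-k}{2\beta_k+2k(\beta_k-2)}\right|\sim \frac{k}{4}\to\infty
\]
falls out of explicit algebra, with no correction terms to estimate. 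You instead keep $\beta$ arbitrary near $2+$, bound $\int_0^1 g_\beta$ below by its pre-escape portion ($N$ equal terms of size $\eta/\beta$), and transfer this to $E_{\mu_2}-E_{\mu_\beta}$. The transfer step, which you rightly flag as the only delicate one, is indeed routine: writing
\[
E_{\mu_2}-E_{\mu_\beta}=\frac{\tfrac12\int_0^1 g_\beta-\int_0^1 x\,g_\beta}{1+\int_0^1 g_\beta},
\qquad
\int_0^1 x\,g_\beta=\sum_{n\ge1}\frac{(T_\beta^n(1))^2}{2\beta^n},
\]
the pre-escape part of the quadratic sum is geometric and dominated by its last term, $\sum_{n=1}^N\beta^{n-2}\eta^2/2=\frac{\eta^2(\beta^N-1)}{2\beta(\beta-1)}\le\frac{\eta}{2}$ (since $\beta^N<\beta/\eta$), the tail is at most $\frac{\beta^{-N}}{2(\beta-1)}\le\frac{\eta}{2}$ (since $\beta^{-N}\le\eta$), and the denominator is $1+O(\eta|\log\eta|)\le 2$; hence $E_{\mu_2}-E_{\mu_\beta}\ge\frac12\left(\frac{N\eta}{2\beta}-\eta\right)$ with $N\sim|\log\eta|/\log 2\to\infty$, which is exactly what you need. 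As for what each approach buys: the paper's construction gives exact values along a sparse sequence and sidesteps all two-sided estimation, whereas yours proves the strictly stronger statement that $E_{\mu_2}-E_{\mu_\beta}\asymp|\beta-2|\,(1+|\log|\beta-2||)$ for \emph{every} $\beta$ close to $2+$, i.e.\ the log-Lipschitz bound of Lemma~\ref{lem_psicont} is sharp in order everywhere rather than only along a subsequence, and it yields the conclusion of Proposition~\ref{prop_id} for all $\beta$ near $2$ rather than for a sequence. One small slip to fix when writing this up: the condition for $T_\beta^n(1)$ to remain on the first branch is $\beta^{n-1}(\beta-2)<1$, equivalently $T_\beta^{n-1}(1)<1/\beta$, not $T_\beta^{n-1}(1)<1$; with the paper's choice of $N$ this does hold for all $n\le N$, so your conclusion is unaffected.
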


To see that Lemma~\ref{lem_id} readily implies \eqref{eq_idwant}, note first that we can discard the absolute values from the statement of this lemma by \eqref{eq_expvalueineq}. As we previously showed that $E_{\mu_{\beta}}$ is bounded away from 0, we also have
\[
\frac{1}{E_{\mu_{\beta_k}}}-\frac{1}{E_{\mu_{2}}} > \tilde{c}_{\beta_k}(\beta_k-2)
\]
for some $\tilde{c}_{\beta_k} > 0$ such that $\lim_{\beta_k \to 2+}\tilde{c}_{\beta_k}=\infty$.
But then for $\tilde{c}_{\beta_k} > \frac{1}{\varepsilon}$ we have
\[
\psi^{\varepsilon}(\beta_k)=2+\varepsilon\left( \frac{1}{E_{\mu_{\beta_k}}}-2\right) > 2+\varepsilon\tilde{c}_{\beta_k}(\beta_k-2) > \beta_k.
\]
By choosing $k_0$ so large such that $\tilde{c}_{\beta_{k_0}} > \frac{1}{\varepsilon}$ holds for all $k \geq k_0$, we obtain that $\psi^{\varepsilon}(\beta_k) > k$ for all $k \geq k_0$. 

\begin{proof}[The proof of Lemma \ref{lem_id}.]
We are going to construct the sequence $\beta_k$ explicitly. Let $\beta_k$ be such that the first $k$ images of 1 fall under the first branch of $T_{\beta_k}$ and the $k+1$-th image of 1 is 0. This means that
\begin{align}
T_{\beta_k}(1)&=\beta_k-2 \nonumber \\
T_{\beta_k}^2(1)&=\beta_k(\beta_k-2) \nonumber \\
T_{\beta_k}^3(1)&=\beta_k^2(\beta_k-2) \nonumber \\
&\vdots \nonumber\\
T_{\beta_k}^k(1)&=\beta_k^{k-1}(\beta_k-2) \nonumber \\
T_{\beta_k}^{k+1}(1)&=\beta_k^{k}(\beta_k-2)=1 \equiv 0. \label{eq_betak}
\end{align}
To obtain $\beta_k$, one simply has to find the unique positive solution of \eqref{eq_betak}. It is easy to see that the thus defined $\beta_k \to 2+$ as $k \to \infty$. 

\begin{figure}
	\centering
	\begin{tikzpicture}[scale=2]
	\draw (0,0) -- (2,0) node[above right] {$x$};
	\draw (0,0) -- (0,2) node[above right] {\hspace{0.1cm}$T_{\beta_3}(x)$};
	\draw[dashed] (2,0) -- (2,2) -- (0,2);
	\draw[dotted] (0.95,0) -- (0.95,2);
	\draw[dotted] (1.9,0) -- (1.9,2);
	\draw[thick] (0,0) -- (0.95,2);
	\draw[thick] (0.95,0) -- (1.9,2);
	\draw[thick] (1.9,0) -- (2,0.21);
	\draw (0.04,-0.2) node {\text{\tiny{$\beta_3-2$}}};
	\draw (0.55,0.2) node {\text{\tiny{$\beta_3(\beta_3-2$)}}};
	\draw (1.25,-0.2) node {\text{\tiny{$\beta_3^2(\beta_3-2)$}}};
	\draw (1.08,-0.4) node {\text{\tiny{$=\frac{1}{\beta_3}$}}};
	\draw (2,-0.4) node {\text{\tiny{$=1$}}};
	\foreach \x/\xtext in {2/\text{\tiny{\hspace{1cm}$\beta_3^3(\beta_3-2)$}},0.95/,0.21/,0.44/}
	\draw[shift={(\x,0)}] (0pt,2pt) -- (0pt,-2pt) node[below] {$\xtext$};
	\foreach \y/\ytext in {2/1}
	\draw[shift={(0,\y)}] (2pt,0pt) -- (-2pt,0pt) node[left] {$\ytext$};
	\end{tikzpicture}
	\caption{The choice of $\beta_k$, $k=3$ is pictured.} \label{fig_betak}
\end{figure}
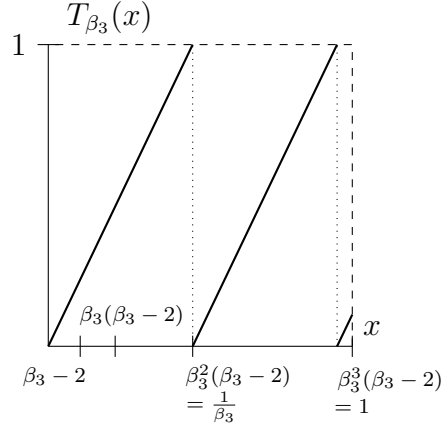

Straightforward calculations using \eqref{eq_normalizer} and \eqref{eq_exp} give that
\begin{align*}
\int_0^1h_{\beta_k}(x)\text{ d}x&=\sum_{n=0}^{\infty}\frac{1}{\beta_k^n}T_{\beta_k}^n(1)=1+k\cdot\frac{1}{\beta_k}(\beta_k-2) \\
\int_0^1xh_{\beta_k}(x)\text{ d}x&=\sum_{n=0}^{\infty}\frac{1}{2\beta_k^n}(T_{\beta_k}^n(1))^2=\frac{1}{2}\left(1+\frac{\beta_k^k-1}{\beta_k-1}\cdot\frac{1}{\beta_k}(\beta_k-2)^2\right)
\end{align*}
Now as
\[
E_{\beta_k}=\frac{1}{\int_0^1h_{\beta_k}(x)\text{ d}x}\int_0^1xh_{\beta_k}(x)\text{ d}x,
\]
and in particular $E_{2}=\frac{1}{2}$, we obtain that
\begin{align*}
|E_{\beta_k}-E_2|=\left|\frac{\frac{1}{2}\cdot\left(1+\frac{\beta_k^k-1}{\beta_k-1}\cdot\frac{1}{\beta_k}(\beta_k-2)^2\right)}{1+k\cdot\frac{1}{\beta_k}(\beta_k-2)}-\frac{1}{2}\right|=|\beta_k-2|\cdot \left|\frac{\frac{\beta_k^k-1}{\beta_k-1}(\beta_k-2)-k}{2\beta_k+2k(\beta_k-2)}\right|.
\end{align*}
Since $\left|\frac{\frac{\beta_k^k-1}{\beta_k-1}(\beta_k-2)-k}{2\beta_k+2k(\beta_k-2)}\right| \to \infty$ we can choose
\[
c_{\beta_k}=\left|\frac{\frac{\beta_k^k-1}{\beta_k-1}(\beta_k-2)-k}{2\beta_k+2k(\beta_k-2)}\right|.
\] 
\end{proof}

\section{Proof of Theorem \ref{theo_main}} \label{sec_thm2}
Throughout this section we assume that $F > 0$ for all $x \neq 0$ and $F'$ has the property $F'(x)=O\left(\frac{1}{|\log x|}\right)$ as $x \to 0+$.

\subsection{Weak self-consistency: unique acim}
In this section we prove the following proposition:
\begin{prop}
There exists an $\varepsilon^*_1 > 0$ such that for $0 \leq \varepsilon < \varepsilon^*_1$ the function $\psi^{\varepsilon}$ has a unique fixed point.
\end{prop}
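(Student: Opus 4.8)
The plan is to show that $\beta=2$ is the only fixed point of $\psi^{\varepsilon}$ once $\varepsilon$ is small. The starting observation is that the argument of $F$ is uniformly bounded: by \eqref{eq_embound} we have $E_{\mu_{\beta}} \in [\tfrac14,\tfrac12]$, so $\frac{1}{E_{\mu_{\beta}}}-2 \in [0,2]$ for every $\beta$. Since $F$ is continuous, $M:=\sup_{[0,2]}|F|<\infty$ and therefore $\psi^{\varepsilon}(\beta) \le 2+\varepsilon M$ for all $\beta$. Consequently every fixed point lies in $[2,2+\varepsilon M]$, which is contained in $[2,3)$ as soon as $\varepsilon<1/M$. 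Moreover $F\ge 0$ forces $\psi^{\varepsilon}(\beta)\ge 2>\beta$ for $\beta<2$, so no fixed point can occur below $2$, while $\psi^{\varepsilon}(2)=2$ always holds. Thus it suffices to prove the strict inequality $\psi^{\varepsilon}(\beta)<\beta$ for all $\beta\in(2,3)$.

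The heart of the argument is a sharp estimate near $\beta=2$. Write $\eta=\beta-2$. By Corollary~\ref{cor_psi}(1) there is a constant $\tilde K$ with
\[
0 \le \frac{1}{E_{\mu_{\beta}}}-2 \le \tilde K\,\eta\,(1+|\log\eta|)=:Y(\eta).
\]
Using $F(0)=0$ and the hypothesis $F'(x)=O(1/|\log x|)$ as $x\to 0+$, for all sufficiently small $y$ we have $F(y)=\int_0^y F'(t)\,\mathrm{d}t \le C\int_0^y \frac{\mathrm{d}t}{|\log t|}$. The right-hand side is increasing in $y$ and, by L'H\^opital, satisfies $\int_0^Y \frac{\mathrm{d}t}{|\log t|}\sim \frac{Y}{|\log Y|}$ as $Y\to 0+$. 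Since $\frac{1}{E_{\mu_\beta}}-2\le Y(\eta)$ and $\log Y(\eta)\sim \log\eta$ as $\eta\to0+$, the logarithmic factors cancel and one obtains $F\!\left(\frac{1}{E_{\mu_\beta}}-2\right)\le C''\,\eta=C''(\beta-2)$ for all $\beta$ in some interval $(2,2+\delta)$. Hence $\psi^{\varepsilon}(\beta)\le 2+\varepsilon C''(\beta-2)<\beta$ whenever $\varepsilon<1/C''$.

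Away from $\beta=2$, on $[2+\delta,3)$, the crude bound suffices: $F(\tfrac{1}{E_{\mu_\beta}}-2)\le M$ together with $\beta-2\ge \delta$ gives $\psi^{\varepsilon}(\beta)\le 2+\varepsilon M<2+\delta\le\beta$ as soon as $\varepsilon<\delta/M$. Setting $\varepsilon^*_1=\min\{1/M,\,1/C'',\,\delta/M\}$, both estimates hold simultaneously, so $\psi^{\varepsilon}(\beta)<\beta$ throughout $(2,3)$ while all fixed points are confined to $[2,3)$. Therefore $\beta=2$ is the unique fixed point for $0\le\varepsilon<\varepsilon^*_1$.

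The main obstacle is the cancellation of logarithms in the middle step: I must verify carefully that $\int_0^Y \mathrm{d}t/|\log t|\sim Y/|\log Y|$ and that $|\log Y(\eta)|\sim|\log\eta|$, so that the log-Lipschitz modulus supplied by Corollary~\ref{cor_psi} is exactly compensated by the $1/|\log|$ decay of $F'$. This is precisely where the hypothesis $F'(x)=O(1/|\log x|)$ is essential: it is the weakest decay rendering $\beta\mapsto F(\tfrac{1}{E_{\mu_\beta}}-2)$ genuinely Lipschitz at $\beta=2$, in contrast to the case $F=\mathrm{id}$ of Theorem~\ref{theo_main2}, where this Lipschitz property fails and extra fixed points appear for every $\varepsilon>0$.
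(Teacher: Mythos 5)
Your proof is correct, and at the structural level it mirrors the paper's: fixed points below $2$ are excluded via $\psi^{\varepsilon}\ge 2$ (i.e.\ \eqref{eq_expvalueineq} together with $F\ge 0$), a Lipschitz-type bound $F\left(\frac{1}{E_{\mu_{\beta}}}-2\right)\le C''(\beta-2)$ is established on some interval $(2,2+\delta)$, the region $\beta\ge 2+\delta$ is handled by a crude uniform bound on $F$, and $\varepsilon^*_1$ is taken as a minimum of the resulting thresholds. The genuine difference lies in how you prove the key Lipschitz estimate (the paper's Lemma~\ref{lem_lip}). The paper introduces the inverse function $G$ of $x\mapsto \tilde{K}x(1+|\log x|)$, shows $F'<MG'$ near $0$ from the hypothesis $F'(x)=O(1/|\log x|)$, integrates to get $F<MG$, and then uses monotonicity of $G$ and Corollary~\ref{cor_psi} to conclude $G\left(\frac{1}{E_{\mu_{\beta}}}-2\right)\le \beta-2$. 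You instead integrate the derivative bound directly, $F(y)\le C\int_0^y \mathrm{d}t/|\log t|$, and do the asymptotics by hand: $\int_0^Y \mathrm{d}t/|\log t|\sim Y/|\log Y|$ and $|\log Y(\eta)|\sim|\log\eta|$. The cancellation of logarithms is the same phenomenon in both arguments, but your route avoids the inverse-function construction entirely and is more elementary, at the price of the two asymptotic verifications you flag --- both of which are correct (the first is a one-line L'H\^opital computation, the second follows since $\log(1+|\log\eta|)=o(|\log\eta|)$ and $\log\tilde{K}=o(|\log\eta|)$). A further small gain of your version: for $\beta\ge 2+\delta$ the paper bounds $F\left(\frac{1}{E_{\mu_{\beta}}}-2\right)\le F\left(\frac{2}{\beta-1}\right)\le F\left(\frac{2}{1+\delta}\right)$, which implicitly uses monotonicity of $F$ (true for the model case $F(x)=x^{2k}$ but not among the stated hypotheses), whereas your bound by $\sup_{[0,2]}|F|$ requires no monotonicity. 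One minor imprecision to fix: the containment $\frac{1}{E_{\mu_{\beta}}}-2\in[0,2]$ holds for $\beta\ge 2$ (the lower bound $E_{\mu_{\beta}}\ge\frac14$ in \eqref{eq_embound} requires $\beta\ge 2$), not ``for every $\beta$''; this is harmless in your argument, since you only invoke it on $[2,\infty)$ after fixed points below $2$ have already been ruled out.
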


As discussed, this implies part (1) of Theorem~\ref{theo_main}.

\begin{proof}
As we previously noted $\psi^{\varepsilon}(2)=2$. We are going to show that no other fixed point exists provided that $\varepsilon$ is small enough.

First notice that $\psi^{\varepsilon} \geq 2$ by \eqref{eq_expvalueineq}, so no fixed point exists which is smaller that 2.

We now show that despite the irregularity of $\beta \mapsto E_{\mu_{\beta}}$ in $\beta=2$ (as stated in Lemma \ref{lem_id}), $\psi^{\varepsilon}$ is Lipschitz continuous in $\beta=2$ as a result of the derivative of $F$ vanishing in 0 at an appropriate rate. 

\begin{lem} \label{lem_lip}
There exists a $\delta=\delta(F) > 0$ and an $M=M(F) > 0$  such that
\begin{equation} \label{eq_psilip}
|\psi^{\varepsilon}(\beta)-\psi^{\varepsilon}(2)| \leq \varepsilon M |\beta-2|
\end{equation}
for all $\beta \in (2,2+\delta)$.	
\end{lem}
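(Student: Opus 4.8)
The plan is to reduce the estimate to controlling the single quantity $u(\beta) := \tfrac{1}{E_{\mu_\beta}} - 2$. Since $E_{\mu_2} = \tfrac12$ gives $u(2) = 0$ and $F(0) = 0$, we have $\psi^\varepsilon(2) = 2$, so that
\[
|\psi^\varepsilon(\beta) - \psi^\varepsilon(2)| = \varepsilon \left| F(u(\beta)) \right|,
\]
and moreover $u(\beta) \geq 0$ by \eqref{eq_expvalueineq}. Thus it suffices to produce constants $\delta, M > 0$ (depending only on $F$) with $|F(u(\beta))| \leq M |\beta - 2|$ for $\beta \in (2, 2+\delta)$. Note first that $u(\beta) \to 0$ as $\beta \to 2+$ by Corollary \ref{cor_psi}, so after shrinking $\delta$ we may assume $u(\beta)$ is as small as we like throughout.

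Next I would write $F$ as the integral of its derivative and invoke the hypothesis $F'(t) = O(1/|\log t|)$: fix $C, \delta_0 > 0$ with $|F'(t)| \leq C/|\log t|$ for $0 < t < \delta_0$, and shrink $\delta$ so that $0 \le u(\beta) < \delta_0$. Then
\[
|F(u(\beta))| = \left| \int_0^{u(\beta)} F'(t)\,\mathrm{d}t \right| \leq C \int_0^{u(\beta)} \frac{\mathrm{d}t}{|\log t|}.
\]
Because $t \mapsto 1/|\log t| = 1/\log(1/t)$ is increasing on $(0,1)$, the integrand is maximized at the right endpoint, giving the clean bound
\[
\int_0^{u(\beta)} \frac{\mathrm{d}t}{|\log t|} \leq \frac{u(\beta)}{|\log u(\beta)|}.
\]
So everything reduces to showing that $u(\beta)/|\log u(\beta)|$ is genuinely $O(|\beta-2|)$.

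Finally I would feed in the log-Lipschitz bound from Corollary \ref{cor_psi}, namely $u(\beta) \leq \tilde K \eta (1 + |\log \eta|)$ with $\eta := \beta - 2$, which for small $\eta$ gives $u(\beta) \leq 2\tilde K \eta |\log \eta|$. The numerator thus carries an extra factor $|\log \eta|$, and the whole point is that it is cancelled by the denominator. Indeed, taking logarithms of this upper bound yields $|\log u(\beta)| \geq |\log \eta| - \log(2\tilde K) - \log |\log \eta|$, and since $|\log \eta|$ dominates both correction terms as $\eta \to 0+$, one gets $|\log u(\beta)| \geq \tfrac12 |\log \eta|$ after a further shrink of $\delta$. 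Combining the two estimates then gives
\[
\frac{u(\beta)}{|\log u(\beta)|} \leq \frac{2\tilde K \eta |\log \eta|}{\tfrac12 |\log \eta|} = 4\tilde K \, \eta,
\]
so that $M = 4C\tilde K$ works. The main obstacle, and the only genuinely delicate point, is exactly this last cancellation: the irregularity established in Lemma \ref{lem_id} shows that $u(\beta)$ itself fails to be Lipschitz at $\beta = 2$ (it loses a factor $|\log \eta|$), and the precise role of the hypothesis $F'(t) = O(1/|\log t|)$ is to supply a compensating factor $1/|\log u(\beta)| \sim 1/|\log \eta|$ that restores the Lipschitz regularity of $\psi^\varepsilon$. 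Care must be taken to verify that the nested logarithm $\log|\log \eta|$ and the additive constant $\log(2\tilde K)$ are truly lower order, which is what dictates the final choice of $\delta$.
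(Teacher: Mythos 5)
Your proof is correct, and it reaches the Lipschitz bound by a different mechanism than the paper's. The paper introduces the inverse function $G$ of the log-Lipschitz modulus $x \mapsto \tilde{K}x(1+|\log x|)$ from Corollary \ref{cor_psi}, proves the derivative comparison $F'(x) < M G'(x)$ near $0$ (itself requiring the auxiliary observation that $x^{\alpha} < G(x)$ for any $\alpha > 1$ and small $x$, so that $|\log G(x)| \leq \alpha|\log x|$), integrates to get $F < MG$, and then concludes by composition: $F(u(\beta)) \leq M G(u(\beta)) \leq M G\bigl(\tilde{K}(\beta-2)(1+|\log(\beta-2)|)\bigr) = M(\beta-2)$, where $u(\beta)=\tfrac{1}{E_{\mu_\beta}}-2$. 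You never construct $G$; instead you integrate the hypothesis $|F'(t)| \leq C/|\log t|$ directly, use monotonicity of $t \mapsto 1/|\log t|$ to get the closed-form bound $|F(u)| \leq C u/|\log u|$, and then perform the log-cancellation by hand, verifying $|\log u(\beta)| \geq \tfrac12|\log(\beta-2)|$ because the correction terms $\log(2\tilde{K})$ and $\log|\log(\beta-2)|$ are lower order. Both arguments rest on exactly the same cancellation --- the $O(1/|\log|)$ decay of $F'$ absorbs the logarithmic loss established in Lemma \ref{lem_psicont} --- but your version is more elementary and self-contained: it replaces the inverse-function construction and the derivative comparison (the least transparent steps in the paper, which need the auxiliary estimate $x^{1/\alpha} > \tilde{K}x(1+|\log x|)$) by two explicit inequalities, at the cost of a less clean constant $M = 4C\tilde{K}$. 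The only points deserving a sentence in a polished write-up are the trivial case $u(\beta)=0$, where the conclusion holds because $F(0)=0$ and no division by $|\log u(\beta)|$ is needed, and the requirement $2\tilde{K}(\beta-2)|\log(\beta-2)| < 1$ before taking logarithms; both are absorbed by your shrinking of $\delta$.
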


\begin{proof}
Let $G: (0,h) \to (0,\infty)$ (for some small $h>0$) be the inverse function of the map $x \mapsto \tilde{K}x(1+|\log x|)$ defined for small $x > 0$, where $\tilde{K}$ is from Corollary \ref{cor_psi} part (1), that is, 
\[
\frac{1}{E_{\mu_{\beta}}}-2 \leq \tilde{K}(\beta-2)(1+|\log(\beta-2)|)
\] 
for all $\beta > 2$ sufficiently close to 2. Taking arbitrary $\alpha > 1$, since $x^{\frac{1}{\alpha}} > \tilde{K}x(1+|\log x|)$ for all sufficiently small $x > 0$, we have that $x^{\alpha} < G(x)$ and so $\frac{1}{\alpha|\log x|} < \frac{1}{|\log G(x)|}=\tilde{K}G'(x)$ for all sufficiently small $x > 0$. Therefore, since by assumption $F'(x)$ is $O\left(\frac{1}{|\log x|} \right)$ as $x \to 0+$ we have that $F'(x) < MG'(x)$ for some constant $M=M(F)$ and for sufficiently small $x>0$. Since $F(0)=0$, it follows that $F(x) < M G(x)$ for sufficiently small $x>0$ and so
\[
\psi^{\varepsilon}(\beta)-2=\varepsilon F\left(\frac{1}{E_{\mu_{\beta}}}-2 \right) \leq M \varepsilon G\left(\frac{1}{E_{\mu_{\beta}}}-2 \right) \leq \varepsilon M(\beta-2).
\]

\end{proof}
Now we show that for small enough $\varepsilon$, we have $\psi^{\varepsilon}(\beta) < \beta$ for $2 < \beta < 2+\delta$. By \eqref{eq_psilip}, 
\begin{align*}
\psi^{\varepsilon}(\beta)-2 &\leq \varepsilon M   (\beta-2) \\
\psi^{\varepsilon}(\beta) &\leq \varepsilon M   (\beta-2)+2
\end{align*}
and
\begin{align*}
\varepsilon  M  (\beta-2)+2 & < \beta \quad \text{whenever} \\
\varepsilon   &< \frac{1}{M}.
\end{align*}
We now show
\begin{align} \label{eq_bigbeta}
\psi^{\varepsilon}(\beta)=2+\varepsilon F\left(\frac{1}{E_{\mu_{\beta}}}-2\right) < \beta \quad \text{for $\beta \in [2+\delta,\infty)$.}
\end{align}
By \eqref{eq_embound} we have
\[
F\left(\frac{1}{E_{\mu_{\beta}}}-2\right) \leq F\left(\frac{2}{\beta-1}\right).
\]
Since for $\beta \geq 2+\delta$
\[
2+\varepsilon F\left(\frac{1}{E_{\mu_{\beta}}}-2\right) \leq 2+\varepsilon F\left(\frac{2}{1+\delta}\right) < 2+\delta \leq \beta
\]
when $\varepsilon < \frac{\delta}{F\left(\frac{2}{1+\delta}\right)}$, \eqref{eq_bigbeta} is proved. This implies the statement of the proposition with
\[
\varepsilon^*_1=\min \left\{\frac{1}{M},\frac{\delta}{F\left(\frac{2}{1+\delta}\right)}\right\}.
\]
\end{proof}

\subsection{Strong self-consistency: multiple acims}

In this section we prove Theorem \eqref{theo_main}, part (2). For this it suffices to find a single $\bar{\beta}$ such that $\psi^{\varepsilon}(\bar{\beta}) > \bar{\beta}$ holds for sufficiently large $\varepsilon$.

We in fact show that we can achieve $\psi^{\varepsilon}(\bar{\beta}) > \bar{\beta}$ for arbitrary $\bar{\beta} \in (k,k+1)$, provided that $\varepsilon$ is large enough in terms of $\bar{\beta}$.   

\begin{prop} \label{prop_strong}
Let $\bar{\beta} \in (k,k+1)$. There exists an $\varepsilon^*_2=\varepsilon^*_2(\bar{\beta})$ such that 
\[
\psi^{\varepsilon}(\bar{\beta}) > \bar{\beta}
\]
holds for all $\varepsilon > \varepsilon^*_2(\bar{\beta})$.
\end{prop} 

\begin{proof}
We would like to have
\[
\psi^{\varepsilon}(\bar{\beta})=2+\varepsilon F\left(\frac{1}{E_{\mu_{\bar{\beta}}}}-2\right) > \bar{\beta}
\]
This holds if
\[
\varepsilon > \frac{\bar{\beta}-2}{F\left(\frac{1}{E_{\mu_{\bar{\beta}}}}-2\right)},
\]
so by the choice of 
\[
\varepsilon_2^*=\frac{\bar{\beta}-2}{F\left(\frac{1}{E_{\mu_{\bar{\beta}}}}-2\right)}
\]
the proposition is proved.
\end{proof}

This proposition has the following corollary:

\begin{cor} \label{cor_many}
	The self-consistent system \eqref{eq_selfc} can have an arbitrarily large finite number of invariant measures equivalent to the Lebesgue measure, provided that $\varepsilon$ is large enough. 
\end{cor}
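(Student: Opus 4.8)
The plan is to combine Proposition~\ref{prop_strong} with the intermediate value theorem, using facts already in hand: $\psi^{\varepsilon}$ is continuous on $[2,\infty)$ by Corollary~\ref{cor_psi}(2); it satisfies $\psi^{\varepsilon}(k)=2$ at every integer $k\ge 2$ (Lebesgue is invariant for integer $\beta$-maps, so $\tfrac{1}{E_{\mu_k}}-2=0$ and $F(0)=0$); and, since $F\ge 0$, its fixed points are in one-to-one correspondence with the acims of \eqref{eq_selfc}. First I would fix an arbitrary integer $N\ge 1$. For each $k\in\{2,3,\dots,N\}$ I choose a point $\bar{\beta}_k\in(k,k+1)$, say $\bar{\beta}_k=k+\tfrac12$, and let $\varepsilon_2^*(\bar{\beta}_k)$ be the threshold supplied by Proposition~\ref{prop_strong}. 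Setting $\varepsilon^*=\max_{2\le k\le N}\varepsilon_2^*(\bar{\beta}_k)$, a finite maximum over finitely many values, one has $\psi^{\varepsilon}(\bar{\beta}_k)>\bar{\beta}_k$ for every such $k$ as soon as $\varepsilon>\varepsilon^*$.

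Next I would locate one fixed point in each interval $(k,k+1)$ through a sign change of the continuous function $g(\beta)=\psi^{\varepsilon}(\beta)-\beta$. At the right endpoint, $g(k+1)=\psi^{\varepsilon}(k+1)-(k+1)=2-(k+1)=1-k<0$ since $k\ge 2$, while at the interior point $g(\bar{\beta}_k)=\psi^{\varepsilon}(\bar{\beta}_k)-\bar{\beta}_k>0$. By continuity and the intermediate value theorem there is a $\beta_k^*\in(\bar{\beta}_k,k+1)\subset(k,k+1)$ with $\psi^{\varepsilon}(\beta_k^*)=\beta_k^*$. Note that no monotonicity of $\psi^{\varepsilon}$ is required here, only continuity together with the two opposite signs.

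Finally I would assemble the count. The points $\beta_k^*$ for $k=2,\dots,N$ lie in the pairwise disjoint intervals $(k,k+1)$ and are therefore distinct; together with the fixed point $\beta=2$ they give $N$ distinct fixed points of $\psi^{\varepsilon}$. Through the one-to-one correspondence these yield $N$ distinct acims of \eqref{eq_selfc}: the one at $\beta=2$ is Lebesgue, and each $\mu_{\beta_k^*}$ is equivalent to Lebesgue by the classical result of \cite{renyi1957representations} recalled in Section~\ref{sec_psi}. Since $N$ was arbitrary, this proves the corollary. The statement is essentially a repackaging of Proposition~\ref{prop_strong}, so I expect no serious obstacle; the two points deserving care are that the threshold $\varepsilon^*$ must be allowed to grow with $N$ (the bound $\varepsilon_2^*(\bar{\beta}_k)=(\bar{\beta}_k-2)/F(\tfrac{1}{E_{\mu_{\bar{\beta}_k}}}-2)$ tends to $\infty$ as $k\to\infty$, since the numerator grows while $E_{\mu_{\beta}}\to\tfrac12$ forces the denominator to $0$, so no single $\varepsilon$ produces infinitely many acims by this argument), and that continuity of $\psi^{\varepsilon}$ across the integer points $k\ge 3$ is genuinely used so that the intermediate value theorem applies on the whole interval—this is precisely what part (2) of Lemma~\ref{lem_psicont} secures.
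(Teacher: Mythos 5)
Your proposal is correct and follows essentially the same route as the paper: fix finitely many $\bar{\beta}_k \in (k,k+1)$, take the maximum of the thresholds $\varepsilon_2^*(\bar{\beta}_k)$ from Proposition~\ref{prop_strong}, and use continuity of $\psi^{\varepsilon}$ together with $\psi^{\varepsilon}(k+1)=2$ to extract a fixed point in each interval via the intermediate value theorem, then invoke the fixed-point--acim correspondence. The only difference is that you spell out the sign-change argument explicitly (the paper delegates it to the outline in Section~\ref{sec_psi}), and your closing observation that $\varepsilon_2^*(\bar{\beta}_k)\to\infty$ as $k\to\infty$, so this argument cannot yield infinitely many acims for a fixed $\varepsilon$, is a correct and worthwhile remark.
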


To see this, let $\ell \geq 2$ be an arbitrary integer. Choose $\bar{\beta}_k \in (k,k+1)$, $k=2,\dots,\ell$. Let $\varepsilon^*=\max_{k \in \{2,\dots,\ell\}}\varepsilon_2^*(\bar{\beta}_k)$. Then Proposition~\ref{prop_strong} implies that for $\varepsilon > \varepsilon^*$, the function $\psi^{\varepsilon}$ has a fixed point on each of the intervals $(k,k+1)$, $k=2,\dots,\ell$ implying a total number of at least $\ell$ acims.

\section{Numerical results} \label{sec_num}

\subsection{Illustrations of $\beta \mapsto \psi^{\varepsilon}(\beta)$} \label{sec_num1}
To illustrate the results of the previous sections, we present some computer approximations of the curve $\beta \mapsto \psi^{\varepsilon}(\beta)$ for some appropriate functions $F$. Since $T_{\beta}(x)=\beta x \mod 1$ is ergodic (proved in \cite{renyi1957representations}), we can approximate $E_{\mu_{\beta}}$ by computing the reciprocal of ergodic averages. This means we can approximate the graph of $\psi^{\varepsilon}$ by
\[
\psi^{\varepsilon}(\beta) \approx P_{x,N}(\beta)=2+\varepsilon F\left(\frac{N}{\sum_{n=0}^{N-1}T_{\beta}^n(x)}-2\right)
\] 
for (Lebesgue) almost every $x \in [0,1]$ and $N$ large. 

\begin{figure}
	\centering
	\begin{subfigure}[b]{0.44\textwidth}
		\centering
		\includegraphics[scale=0.5]{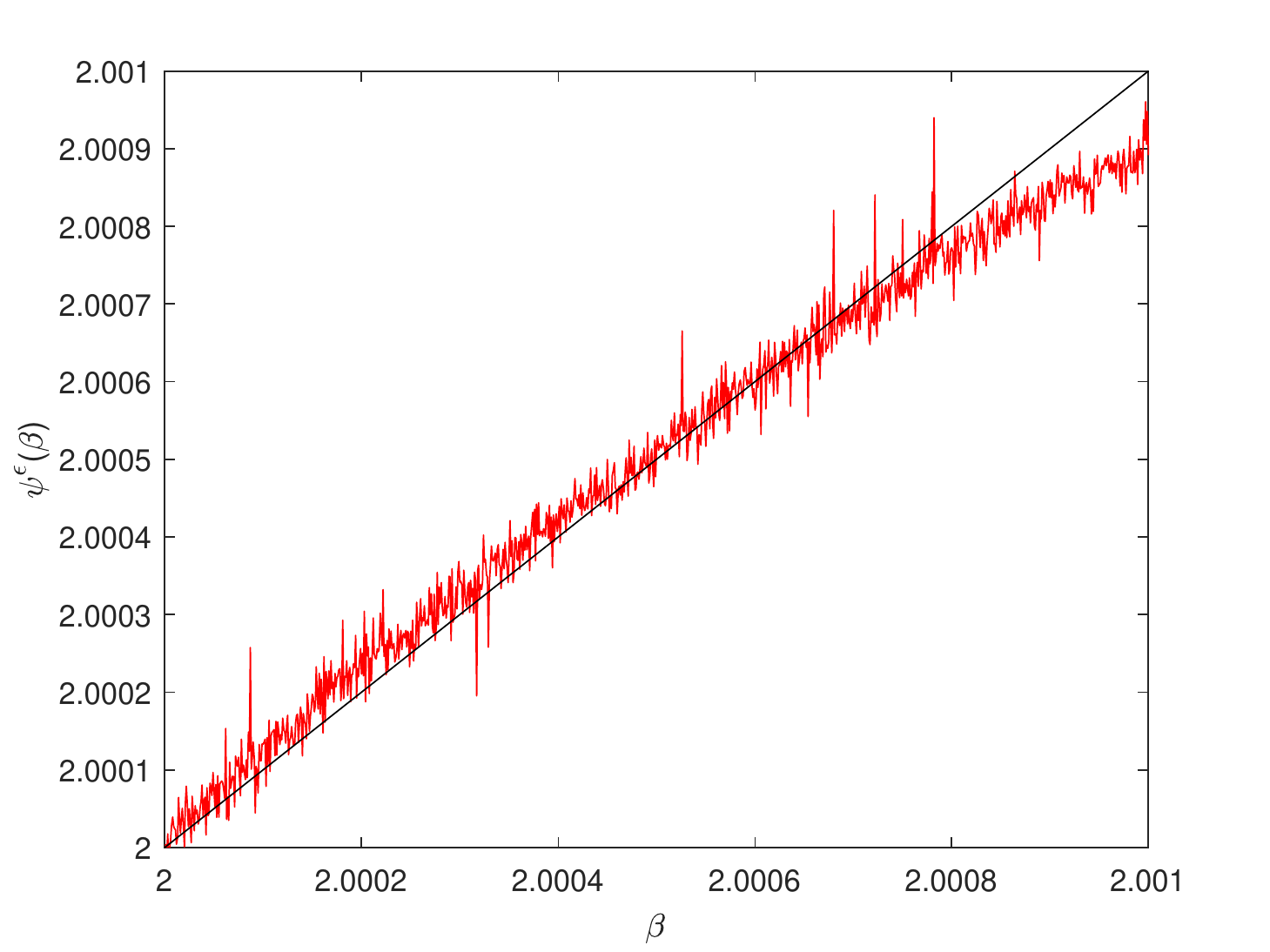}
		\caption{$N=10^8$, $\Delta=10^{-6}$, $\beta_1=2.001$,\\ $\varepsilon=0.1$}
	\end{subfigure}
	\qquad
	\begin{subfigure}[b]{0.44\textwidth}
		\centering
		\includegraphics[scale=0.5]{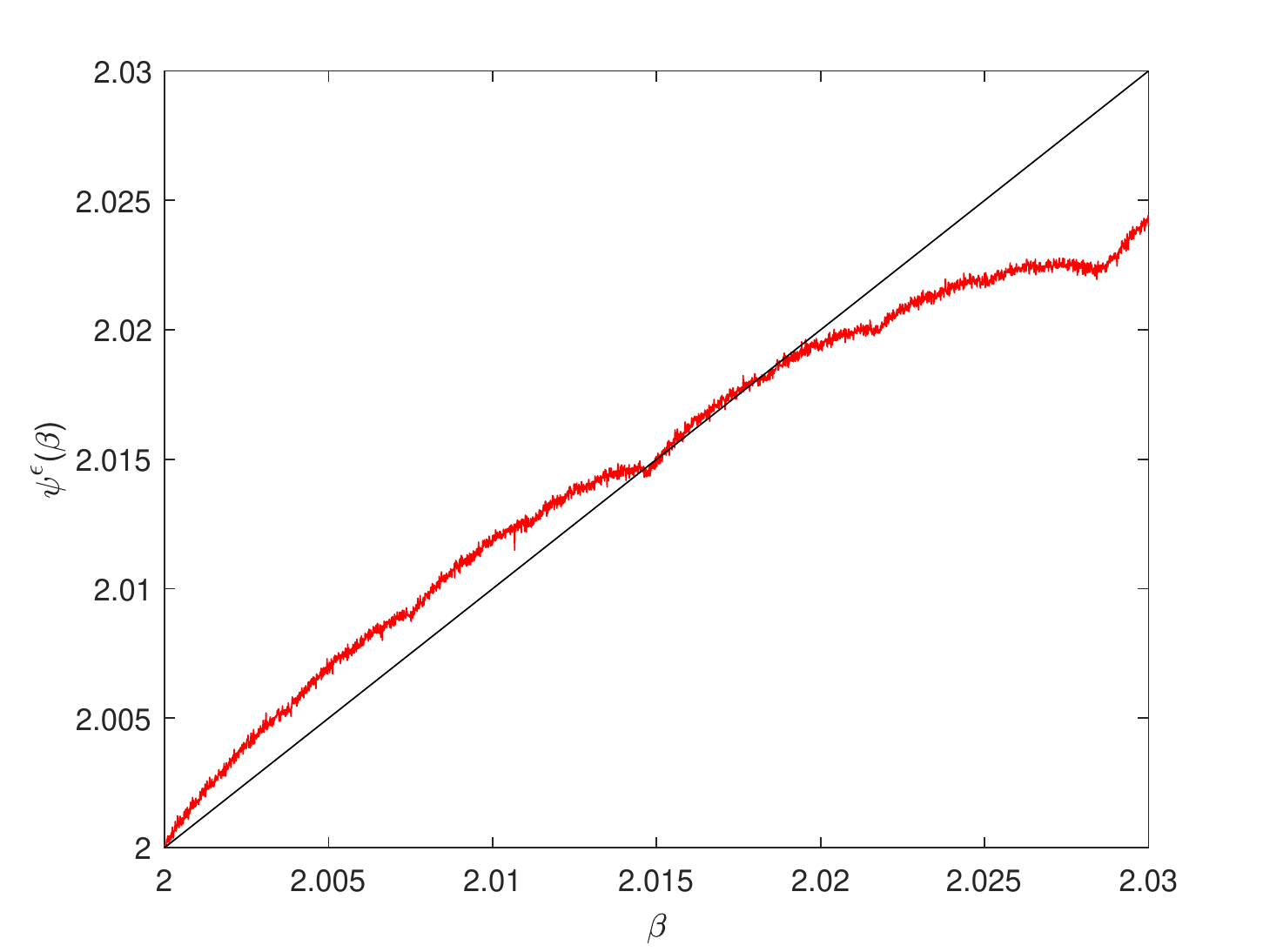}
		\caption{$N=10^7$, $\Delta=10^{-5}$, $\beta_1=2.03$,\\ $\varepsilon=0.2$}
	\end{subfigure}
	\caption{Approximation of $\psi^{\varepsilon}$ with ergodic averages, $F(x)=x$. $P_{x,N}(\beta)$ is plotted for $\beta \in [2,\beta_1]$ with gridsize $\Delta$ and for $x$ drawn uniform randomly from $[0,1]$. The line $x=y$ is plotted in black.} \label{fig_psi0}
\end{figure}

We first consider the setting of Theorem \ref{theo_main2}: the case when $F$ is the identity. We illustrate on Figure \ref{fig_psi0} that no matter how small $\varepsilon$ is, the curve approximating $\beta \mapsto \psi^{\varepsilon}(\beta)$ always grows above the line $x=y$ for $\beta$ values sufficiently close to 2. This shows that $\beta \mapsto \frac{1}{E_{\mu_{\beta}}}-2$ cannot be Lipschitz at 2, otherwise multiplying it with sufficiently small $\varepsilon$ would force the curve $\beta \mapsto \psi^{\varepsilon}(\beta)=2+\varepsilon\left( \frac{1}{E_{\mu_{\beta}}}-2\right)$ to always stay below $x=y$.

\begin{figure}[h!!]
	\centering
	\begin{subfigure}[b]{\textwidth}
		\centering
		\includegraphics[scale=0.5]{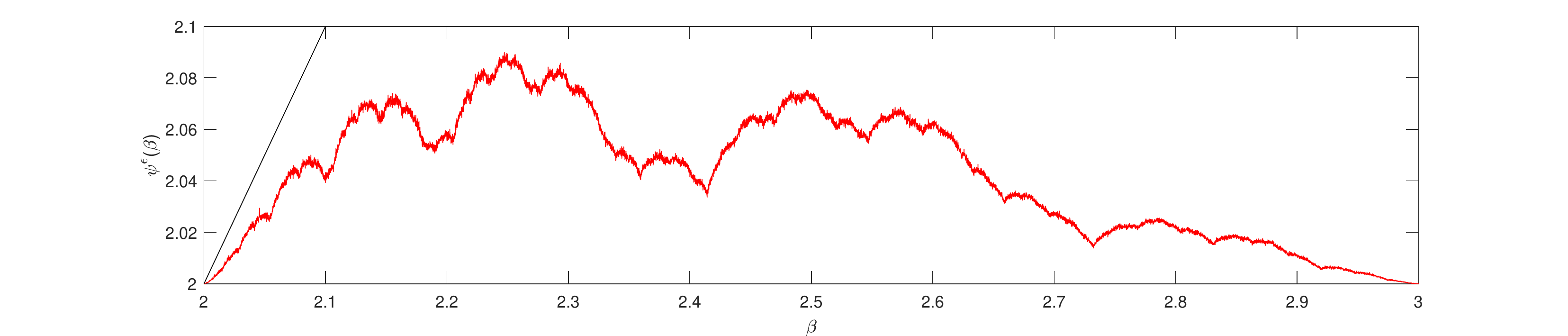}
		\caption{$F(x)=x^2$} \label{fig_psi1}
	\end{subfigure}
	\qquad
	\begin{subfigure}[b]{\textwidth}
		\centering
		\includegraphics[scale=0.5]{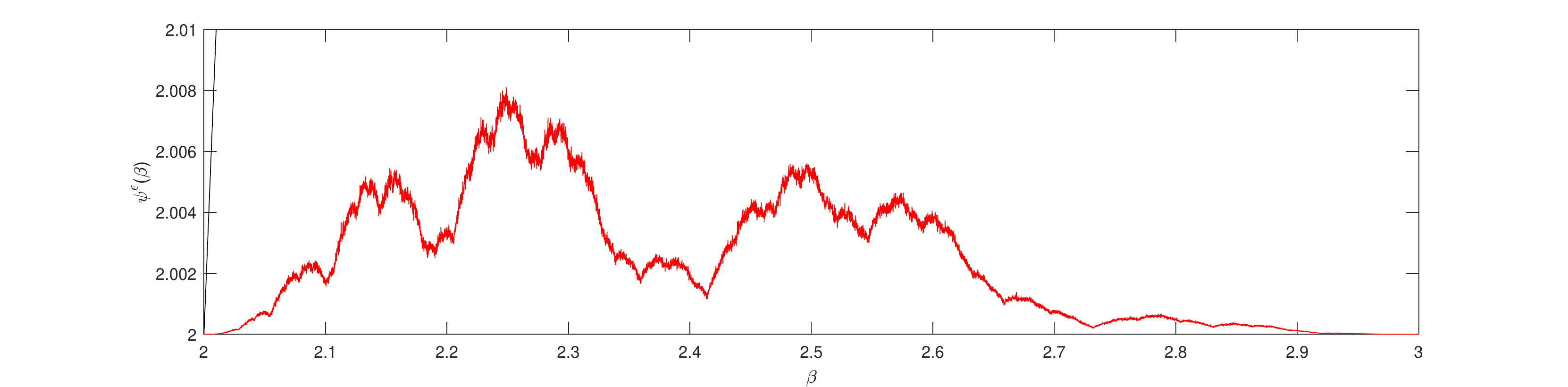}
		\caption{$F(x)=x^4$} \label{fig_psi3}
		\qquad
		\begin{subfigure}[b]{\textwidth}
			\centering
			\includegraphics[scale=0.5]{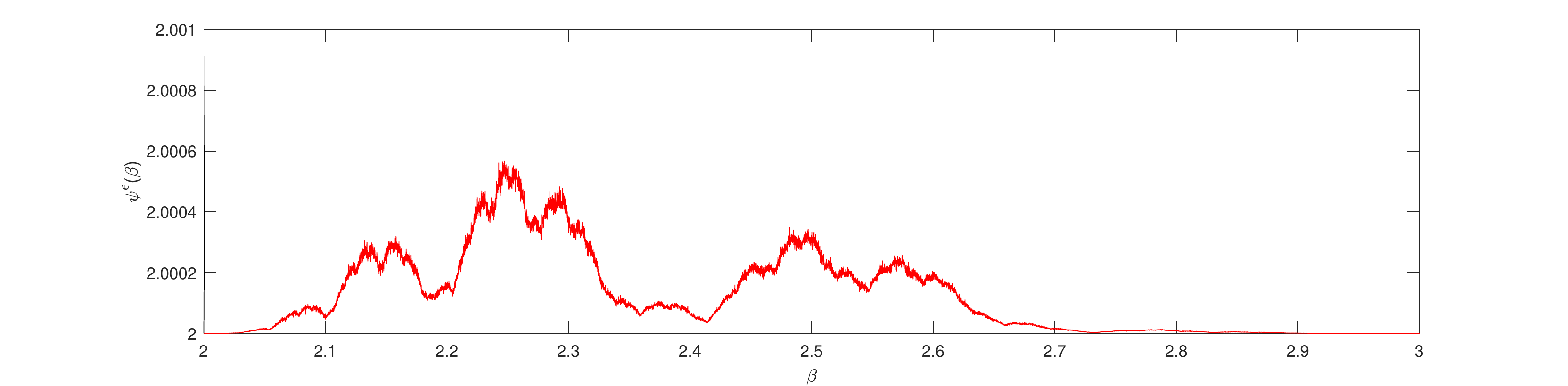}
			\caption{$F(x)=x^6$} \label{fig_psi2}
		\end{subfigure}
	\end{subfigure}
	\caption{Approximation of $\psi^{\varepsilon}$ with ergodic averages. $P_{x,N}(\beta)$ is plotted for $\beta \in [2,3]$ with gridsize $\Delta=10^{-4}$, $N=10^6$ and for $x$ drawn uniform randomly from $[0,1]$,  $\varepsilon=0.8$. The line $x=y$ is plotted in black.} \label{fig_psi}
\end{figure}

We now consider the setting of Theorem \ref{theo_main}. We first study the setting of part (1), that is when $\varepsilon$ is sufficiently small. Now $\beta \mapsto F\left(\frac{1}{E_{\mu_{\beta}}}-2\right)$ is Lipschitz at 2 as a result of $F'(x)=O\left(\frac{1}{|\log x|}\right)$ and the log-Lipschitz continuity of $\beta \mapsto \frac{1}{E_{\mu_{\beta}}}-2$. So sufficiently small $\varepsilon$ will not let the curve $\beta \mapsto \psi^{\varepsilon}(\beta)$ rise above $x=y$.  The results of our computer simulations are pictured on Figure \ref{fig_psi}, showing clearly that the curve approximating $\beta \mapsto \psi^{\varepsilon}(\beta)$ has a single intersection with the diagonal $x=y$ at $\beta=2$.

\begin{figure}[h!!]
	\centering
	\begin{subfigure}[b]{0.44\textwidth}
		\centering
		\includegraphics[scale=0.5]{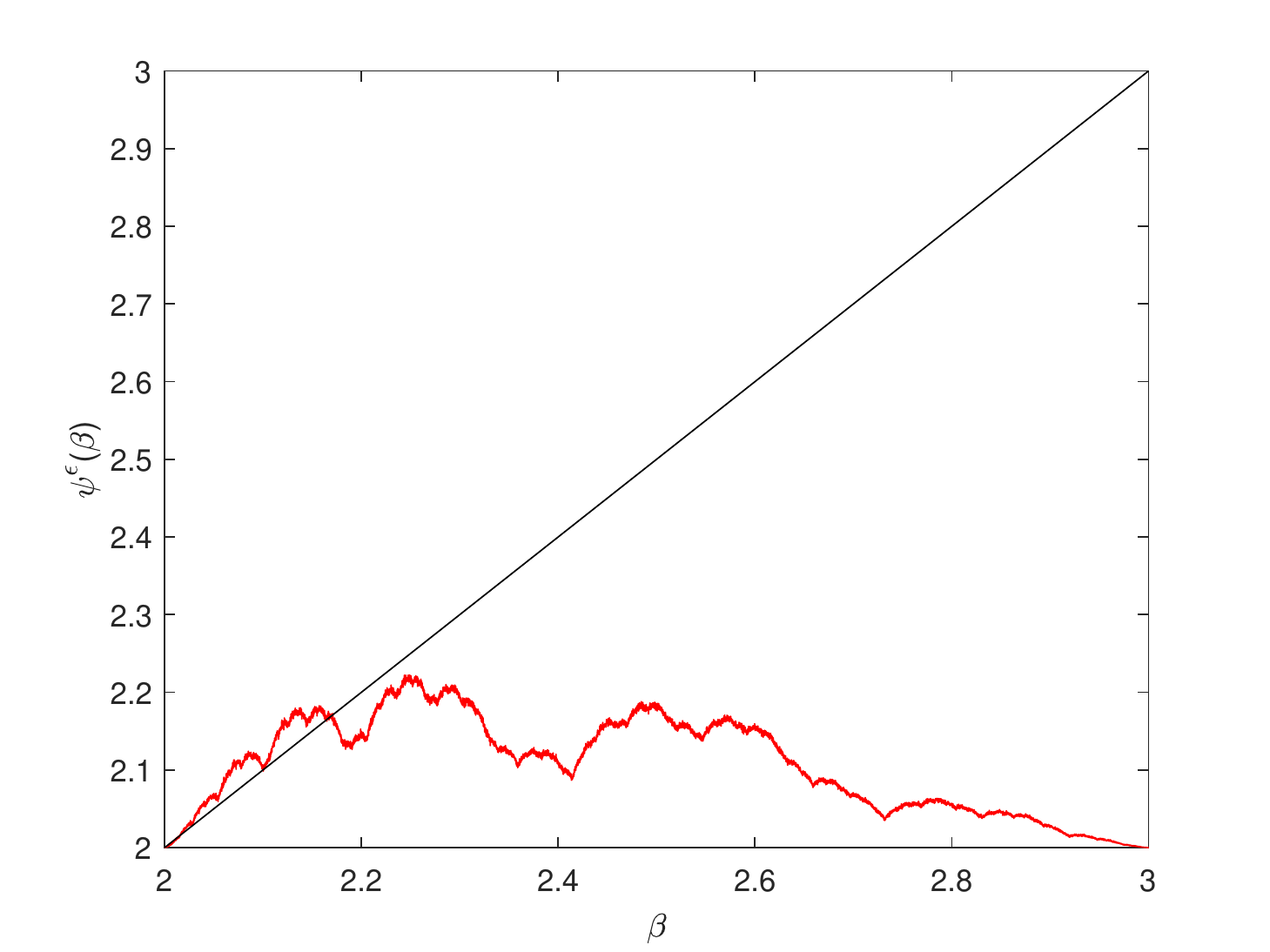}
		\caption{$\varepsilon=2.5$, $\beta_1=3$} 
	\end{subfigure}
	\qquad
	\begin{subfigure}[b]{0.44\textwidth}
		\centering
		\includegraphics[scale=0.5]{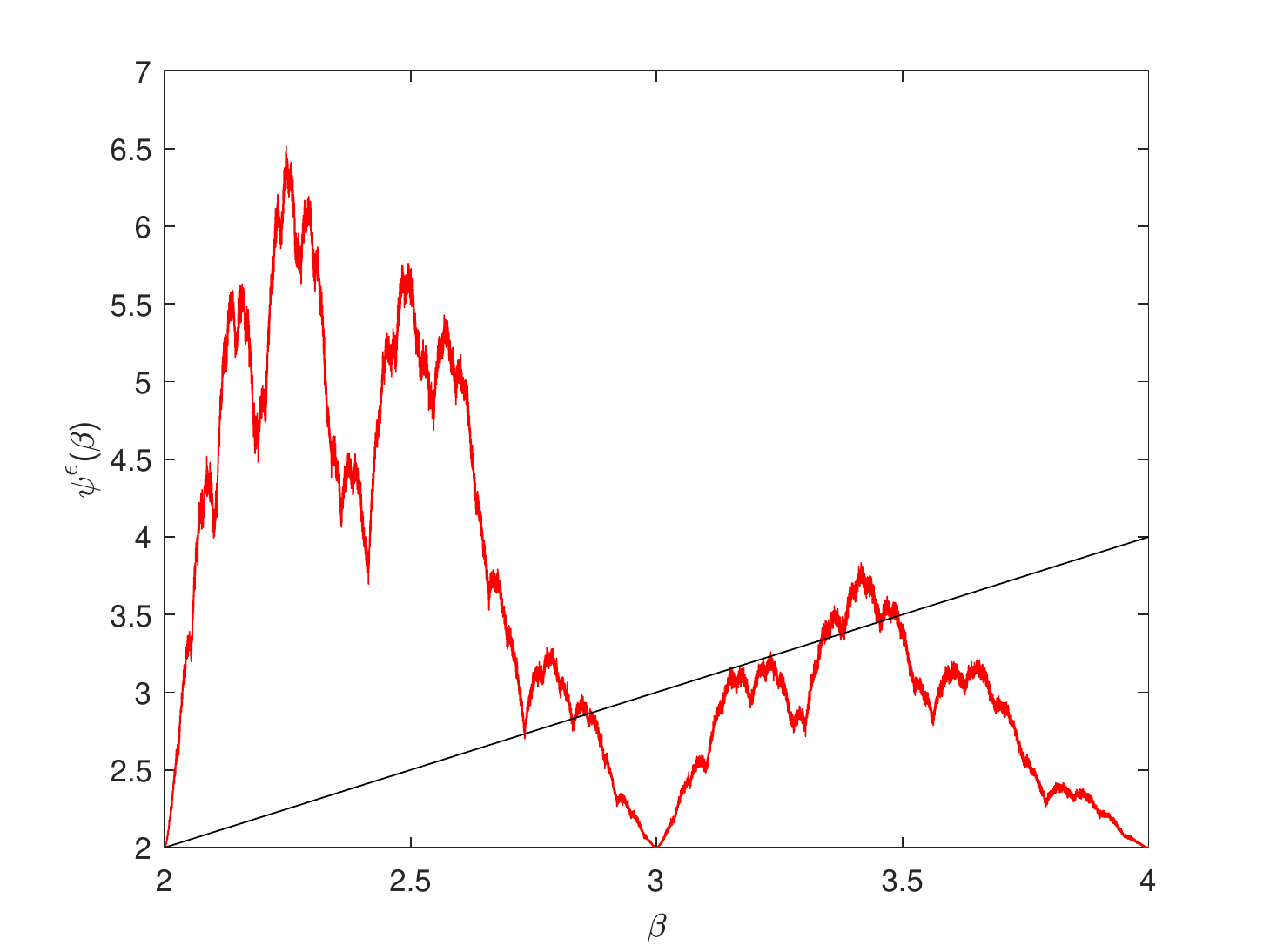}
		\caption{$\varepsilon=50$, $\beta_1=4$} 
	\end{subfigure}
	\qquad
	\begin{subfigure}[b]{0.44\textwidth}
		\centering
		\includegraphics[scale=0.5]{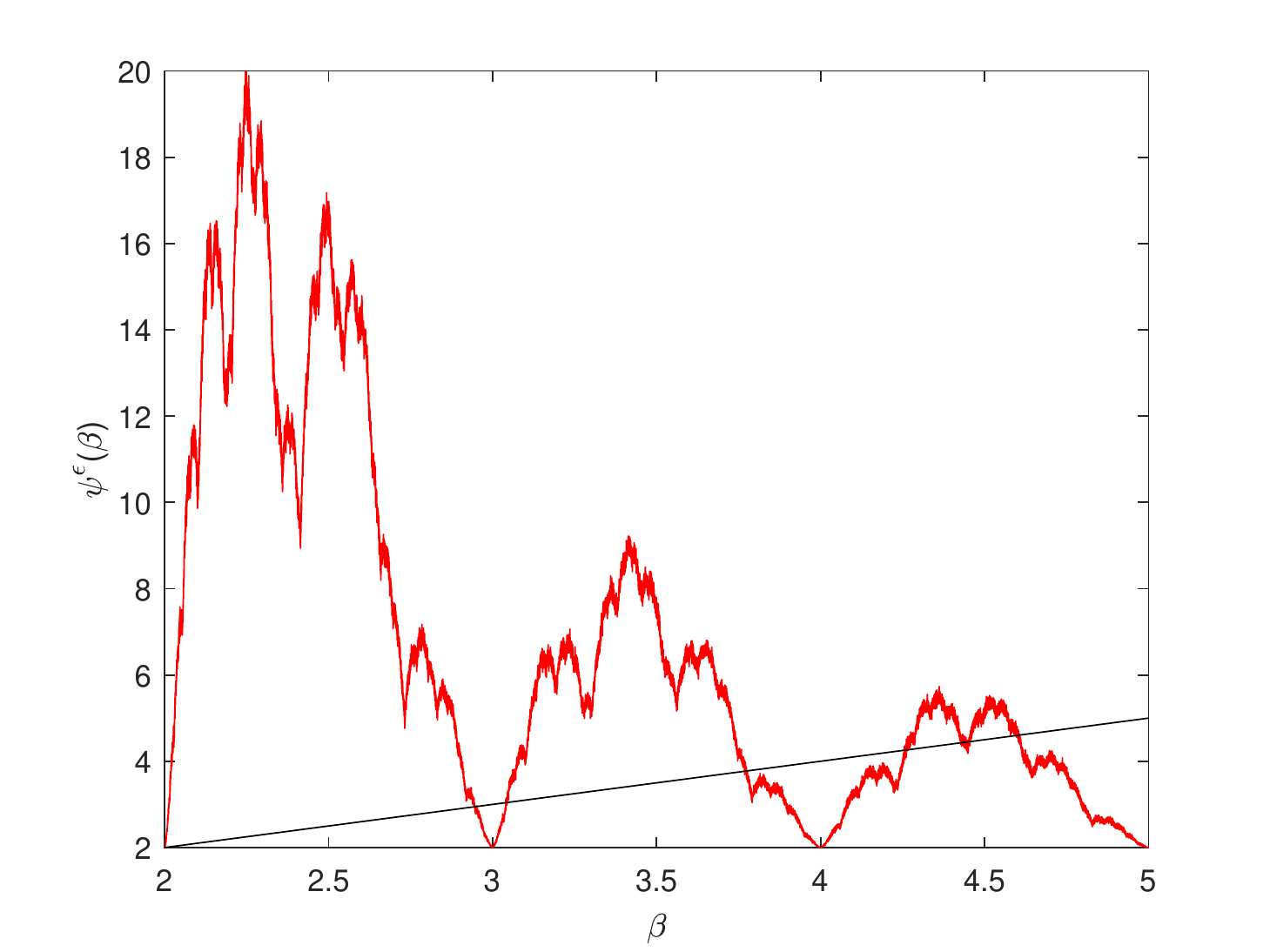}
		\caption{$\varepsilon=200$, $\beta_1=5$} 
	\end{subfigure}
	\caption{Approximation of $\psi^{\varepsilon}$ with ergodic averages, $F(x)=x^2$. $P_{x,N}(\beta)$ is plotted for $\beta \in [2,\beta_1]$ with gridsize $\Delta=10^{-4}$, $N=10^6$ and for $x$ drawn uniform randomly from $[0,1]$. The line $x=y$ is plotted in black.} \label{fig_psi20}
\end{figure}

The setting of Theorem \ref{theo_main} part (2) is studied on Figure \ref{fig_psi20} first for the special case $F(x)=x^2$. We can clearly see, as suggested by Corollary \ref{cor_many}, that if $\varepsilon$ is larger and larger, the curve approximating $\beta \mapsto \psi^{\varepsilon}(\beta)$ has intersections with the line $x=y$ on more and more intervals between two consecutive integers.

\begin{figure}[h!!]
	\centering
	\begin{subfigure}[b]{0.44\textwidth}
		\centering
		\includegraphics[scale=0.5]{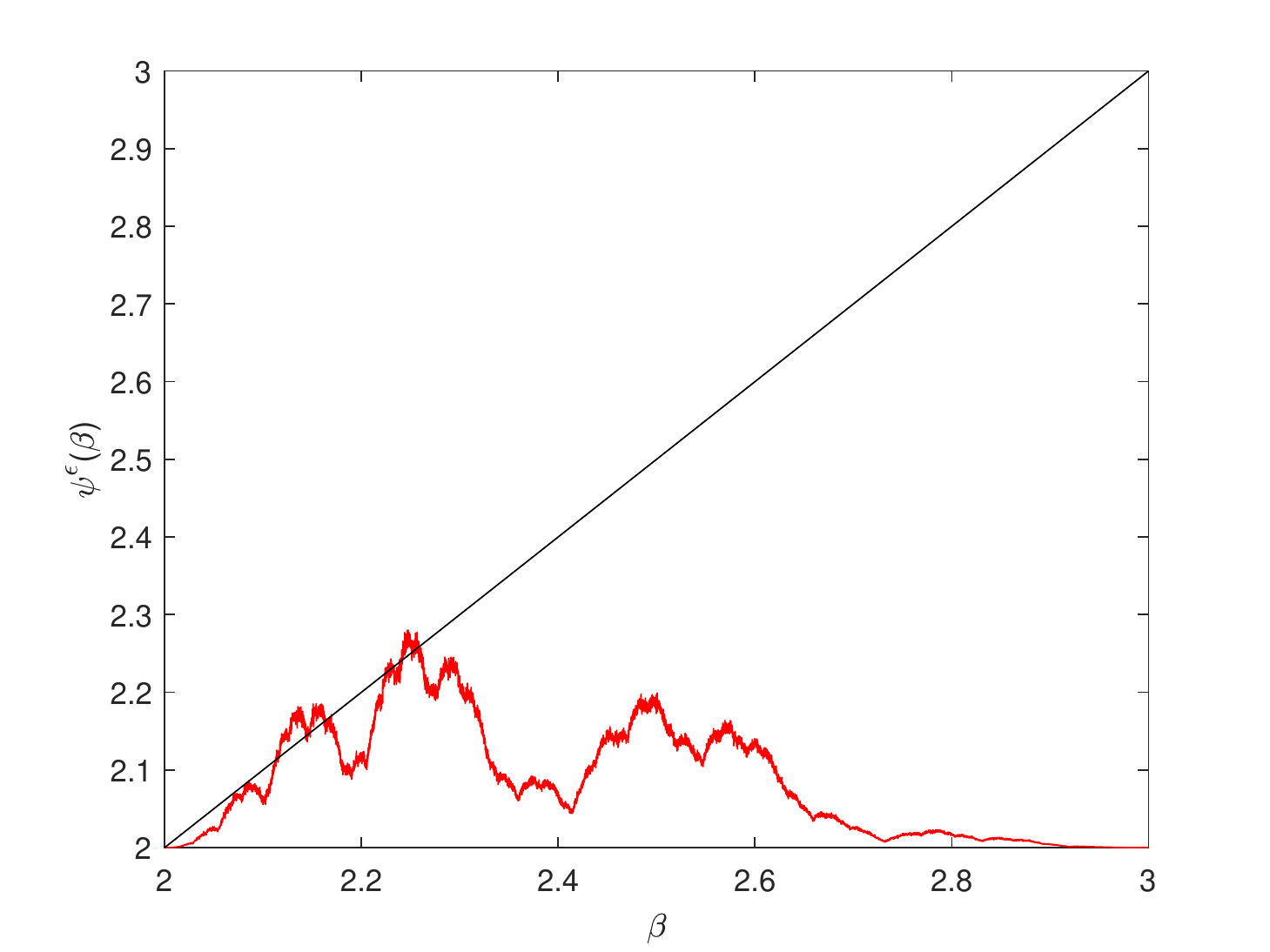}
		\caption{$\varepsilon=35$, $F(x)=x^4$} 
	\end{subfigure}
	\qquad
	\begin{subfigure}[b]{0.44\textwidth}
		\centering
		\includegraphics[scale=0.5]{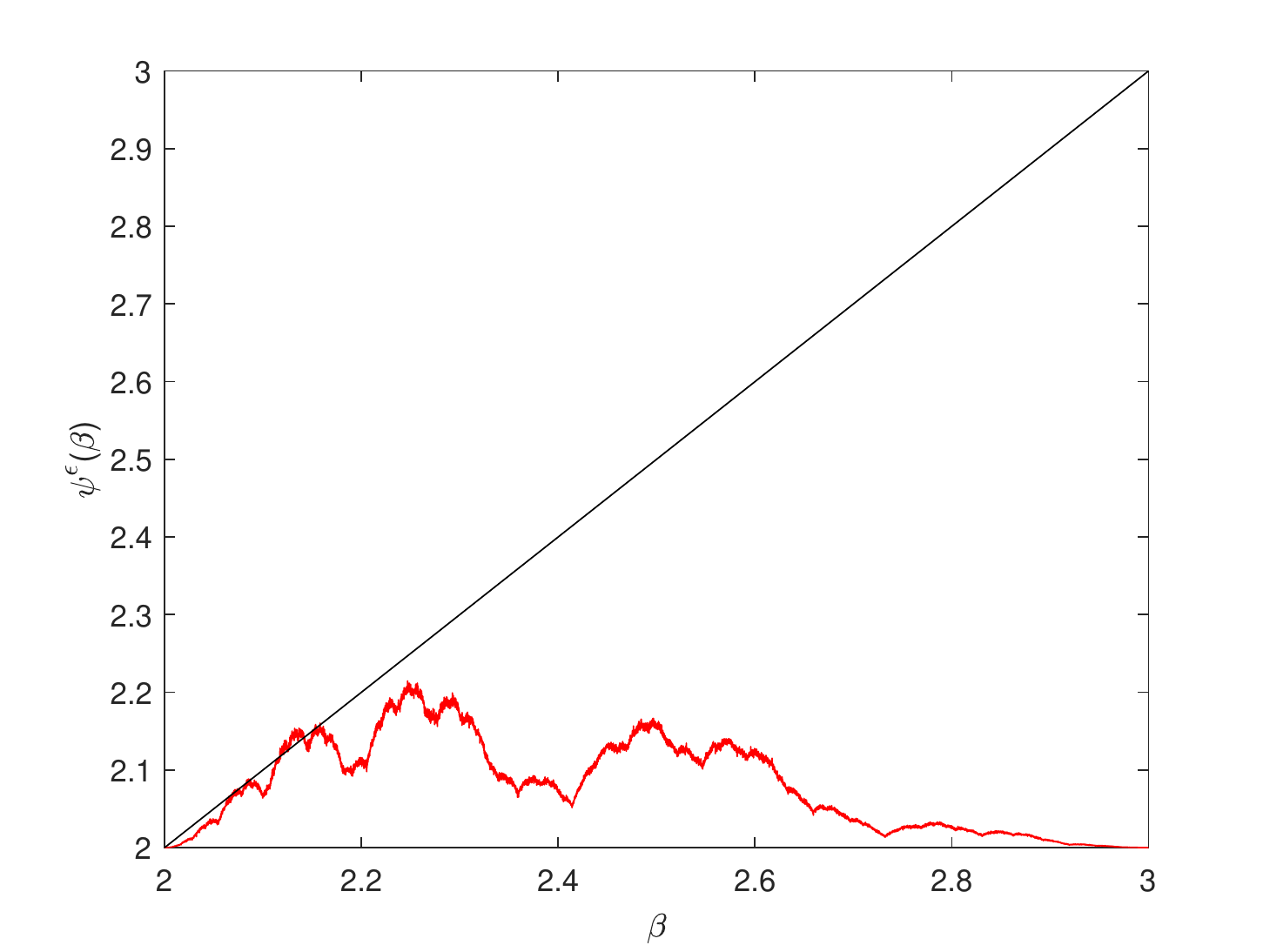}
		\caption{$\varepsilon=400$, $F(x)=x^6$} 
	\end{subfigure}
	\caption{Approximation of $\psi^{\varepsilon}$ with ergodic averages, $P_{x,N}(\beta)$ is plotted for $\beta \in [2,3]$ with gridsize $\Delta=10^{-4}$, $N=10^6$ and for $x$ drawn uniform randomly from $[0,1]$. The line $x=y$ is plotted in black.} \label{fig_psi30}
\end{figure}

Similar plots can be made for $F(x)=x^4$ and $F(x)=x^6$. On Figure \ref{fig_psi30} we can see that for sufficiently large $\varepsilon$ the curve approximating $\beta \mapsto \psi^{\varepsilon}(\beta)$ has intersections with the line $x=y$, indicating multiple invariant densities.

\subsection{Stability of the invariant densities} \label{sec_num2}

Although the existence of a unique or multiple invariant measures is an interesting phenomenon on its own, it is natural to further inquire about their stability. Let d$\mu=f$d$\lambda$ and let $P_{f}^{\varepsilon}$ denote the \emph{transfer operator} associated to the dynamics $T_{\mu}^{\varepsilon}$. This operator maps the density $h$ of a measure $\nu$ to the density of the measure $(T_{\mu}^{\varepsilon})_*\nu$.  Consider an initial measure d$\mu_0=f_0$d$\lambda$. The associated dynamics is $T_{\mu_0}^{\varepsilon}$ and the associated transfer operator is $P_{f_0}^{\varepsilon}$. Let $f_1=P_{f_0}^{\varepsilon}f_0$ be the pushforward density and d$\mu_1=f_1$d$\lambda$ be the pushforward measure. Continuing this further we obtain the $n$th step pushforward density as
\[
f_{n+1}=P_{f_n}^{\varepsilon}f_n, \qquad n=0,1,\dots
\]  
As an ease of notation, consider the \emph{self-consistent transfer operator} $\mathcal{F}_{\varepsilon}$ defined as 
\[
\mathcal{F}_{\varepsilon}(f_n)=P_{f_n}^{\varepsilon}f_n.
\]
An invariant density of the self-consistent system is a fixed point of this operator, and we can study its stability, that is, if densities sufficiently close to it in some metric converge to it. A function space well suited to this problem is the space $BV$ of functions of bounded variation, and the metric is the one given by the bounded variation norm.  

In case of $\varepsilon=0$, producing the doubling map as the dynamics in each step, it is well known that the Lebesgue measure is stable in the sense that all measures with a density of bounded variation converge to it exponentially fast. It is a natural question to ask if this still holds in our self-consistent system when Lebesgue is the unique absolutely continuous invariant measure, so in the setting of Theorem \ref{theo_main} part (1). On the other hand, in the setting of Theorem \ref{theo_main2} and Theorem \ref{theo_main} part (2) we have proved that the Lebesgue measure is a unique invariant measure of the system for sufficiently small values of $\varepsilon$, but for larger values multiple invariant measures exits. It would be interesting to see what kind of bifurcation occurs at the critical value of the coupling: is Lebesgue stable for small $\varepsilon$, and does it stay that way when multiple invariant measures arise, or does it lose its stability? Also, are the new invariant measures stable or unstable? For example, it would be interesting to show a similar behavior to the pitchfork bifurcation observed in the system of coupled fractional linear maps of \cite{bardet2009stochastically}: they show that the stable, unique invariant measure loses stability if the coupling strength is sufficiently increased, and two new stable invariant measures arise. 

To study these questions we present the results of some computer simulations. We first note that there exists an explicit expression for the transfer operator associated to the $\beta$ map $T_{\beta}(x)=\beta x \mod 1$:
\[
P_{\beta}f(x)=\sum_{y \in T^{-1}_{\beta}(x)}\frac{f(y)}{|T_{\beta}'(y)|}, \qquad f \in L^1,
\]
or more explicitly one can write
\begin{equation} \label{eq_pfbeta}
P_{\beta}f(x)=\begin{cases}
\frac{1}{\beta}\sum_{k=0}^{\lfloor \beta \rfloor}f\left( \frac{x+k}{\beta} \right) & \text{ if } \quad  0 \leq x < \beta-\lfloor \beta \rfloor \\
\frac{1}{\beta}\sum_{k=0}^{\lfloor \beta \rfloor-1}f\left( \frac{x+k}{\beta} \right) & \text{ if } \quad \beta-\lfloor \beta \rfloor \leq x \leq 1.  \\
\end{cases}
\end{equation}
It is clear that if $f$ is a a finite linear combination of indicators of intervals (a step function), $P_{\beta}f$ is also a step function. As functions of this kind are easy to store and manipulate by computer programs, we will restrict to working with densities of this kind. Note that this restriction means that we do not need to apply a discretization scheme to compute pushforward densities.

Let the step function $f$ be represented by the vectors $x_f \in [0,1]^N$ such that $x_f(1)=0$ and $x_f(N)=1$ and $y_f=(\mathbb{R}^+_0)^{N-1}$. The vector $x_f$ contains the jumps of the step function in increasing order and $y_f$ contains the respective heights of the steps. We define the \emph{total variation} of $f$ as 
\[
\text{var}(f)=\sum_{i=1}^{N-2}|y_f(i+1)-y_f(i)|.
\] 
Our initial densities will be generated in the following way: $x_f(2),\dots,x_f(N-1)$ are $N-2$ numbers drawn from the uniform random distribution on $(0,1)$ and then ordered increasingly. The values $\tilde{y}_f(1),\dots,\tilde{y}_f(N-1)$ are also drawn from the uniform random distribution on $(0,1)$. We define
\[
y_f(i)=\frac{1}{\sum_{j=1}^{N-1}\tilde{y}_f(j)(x_f(j+1)-x_f(j))}\cdot\tilde{y}_f(i), \quad i=1,\dots, N-1,
\] 
as this is an easy way to generate a fairly general step function of integral 1. 

Let the `expectation' associated to the density $f$ be defined as
\[
E_f=\sum_{i=1}^{N-1}\frac{y_f(i)}{2}(x_f(i+1)^2-x_f(i)^2).
\] 
The action of the self-consistent transfer operator $\mathcal{F}_{\varepsilon}(f)$ can be computed in the following way: take an initial density represented by the pair of vectors $(x_{f_0},y_{f_0}) \in [0,1]^{N_0} \times (\mathbb{R}^+_0)^{N_0-1}$. Compute $\beta_0=2+\varepsilon F \left(\frac{1}{E_{f_0}}-2 \right)$ defining the dynamics $T_{\beta_0}$. Compute the new vector of jumps $x_{f_1}$ as the vector containing the values $T_{\beta_0}(x_{f_0}(i))$, $i=1,\dots,N_0$ and 1 in increasing order.

To compute the new heights of the steps, choose $\rho > 0$ such that $\rho < \min|x_{f_1}(i+1)-x_{f_1}(i)|$ and compute
\[
y_{f_1}(i)=P_{\beta_0}f_0(x_{f_1}(i+1)-\rho) \quad \text{for} \quad i=1,\dots,N_0  
\]
by using the formula \eqref{eq_pfbeta}. As $f_0$ is piecewise constant and exactly stored, the evaluation of a $f_0$ at prescribed places is not an issue. Set $N_1=N_0+1$ and repeat with $(x_{f_1},y_{f_1}) \in [0,1]^{N_1} \times (\mathbb{R}^+_0)^{N_1-1}$. 

Our procedure is the following: we generate $K_1 \times K_2$ of the above described step functions in the following way: for each $k=1,\dots,K_1$ we generate a random integer between 1 and $M$ (denote this by $m(k)$) and generate $K_2$ step functions with $m(k)$ inner jumps (this means not counting 0 and 1). This gives us a fairly general pool of initial densities. 

We then compute the $T$ long trajectories of the densities with respect to the self consistent transfer operator $\mathcal{F}_{\varepsilon}$. We are going to use the notation $f^i_t=\mathcal{F}_{\varepsilon}f^i_0$ where the lower index refers to time and the upper index to which one of the initial densities we are considering, so $t=1,\dots,T$ and $i=1,\dots, K_1 \times K_2$. 

However, our experience is that computational errors grow rapidly and seriously skew our results in the long run. So in each iteration we normalize $f_t^i$ by the numerical integral
\[
I(f_t^i)=\sum_{j=1}^N y_{f_t^i}(j)(x_{f_t^i}(j+1)-x_{f_t^i}(j)),
\]
defining
\[
\tilde{f}_t^i=\frac{1}{I(f_t^i)}f_t^i.
\]
This assures us that the Perron-Frobenius operator is indeed applied to a density function. This density $\tilde{f}_t^i$ might not be the actual density $\mathcal{F}_{\varepsilon}^t(f_0^i)$, but one close to it. It can be thought of as $\mathcal{F}_{\varepsilon}^t(g_0^i)$ for some different initial density $g_0^i$ by anticipating a type of shadowing property.   

We are going to study two mean quantities of the densities. Define the \emph{mean slope} of the densities at time $t$ as
\[
\overline{\beta}(t)=\frac{1}{K_1\times K_2}\sum_{i=1}^{K_1\times K_2}2+\varepsilon F\left(\frac{1}{E_{\tilde{f}_t^i}}-2\right)
\]
and the \emph{mean total variation}  as 
\[
\overline{\text{var}}(t)=\frac{1}{K_1\times K_2}\sum_{i=1}^{K_1\times K_2}\text{var}(\tilde{f}^i_t).
\] 
Finally, we define our notions of \emph{stability} of an invariant density $f_*(\varepsilon)$. We are going to study two types of stability.
\begin{enumerate}
	\item \textbf{Stability in the BV-norm:} $$\overline{\text{var}}_{f_*(\varepsilon)}(t)=\frac{1}{K_1\times K_2}\sum_{i=1}^{K_1\times K_2}\text{var}(\tilde{f}^i_t-f_*(\varepsilon)) \to 0 \quad  \text{as} \quad  t \to \infty$$
	\item \textbf{Stability in the $L^1$-norm:} $$\overline{\text{int}}_{f_*(\varepsilon)}(t)=\frac{1}{K_1\times K_2}\sum_{i=1}^{K_1\times K_2}\int_{[0,1]}|\tilde{f}^i_t-f_*(\varepsilon)|d\lambda \to 0 \quad  \text{as} \quad  t \to \infty$$
\end{enumerate}

If an explicit expression for $f_*(\varepsilon)$ is not available, then in practice $f_*(\varepsilon)$ is $\tilde{f}_{\bar{T}}(\varepsilon)$ for some $\bar{T}$ considerably larger than $T$ and some fixed initial density, as we assume this is a good approximation of an invariant density. 

Note that $L^1$-stability is weaker than $BV$-stability. We further remark that $\overline{\text{var}}(t) \to 0$ as $t \to \infty$ implies stability of the uniform invariant density in the $BV$-sense.  

We first consider the setting of Theorem \ref{theo_main2}. In this case we studied the values $\varepsilon=0.1$ and $\varepsilon=0.2$. From Figure \ref{fig_psi0} we can read that there exists an invariant density $f_*(\varepsilon)$ for which $2+\varepsilon F\left(\frac{1}{E_{f_*(0.1)}}-2\right) \approx 2.0006$ and $2+\varepsilon F\left(\frac{1}{E_{f_*(0.2)}}-2\right) \approx 2.0181$. Running our simulations we can see from Table \ref{tab_x0} that in both cases $\overline{\text{var}}$ does not converge to zero, so the uniform density is not stable in $BV$-sense. To convince ourselves more thoroughly that the constant density is indeed not stable, we made computations with pools of initial densities very close to the constant one in the sense that $\text{var}(f^i_0) < 10^{-4}$. In this case convergence is slower, but it is clearly not to the constant density, see Table \ref{tab_x0b}.

\begin{table}
	\[
	\begin{array}{ | l | l | l | l | }
	\hline
	t & \overline{\text{var}} & \overline{\beta} \\ \hline
	0 & 7.1815 & 2.0065   \\ \hline
	5 &     1.1544 &     2.0039   \\ \hline
	10 &     0.9559 &     2.0020    \\ \hline
	15 &     0.9856 &     2.0015    \\ \hline
	20 &     0.9901 &     2.0012    \\ \hline
	25 &     0.9917 &     2.0011    \\ \hline
	30 &     0.9928 &     2.0010    \\ \hline
	35 &     0.9934 &     2.0009    \\ \hline
	40 &     0.9940 &     2.0008    \\ \hline
	45 &     0.9944 &     2.0008    \\ \hline
	50 &     0.9947 &     2.0007    \\ \hline
	55 &     0.9949 &     2.0007    \\ \hline
	60 &     0.9950 &     2.0007    \\ \hline
	65 &     0.9952 &     2.0007    \\ \hline
	70 &     0.9953 &     2.0007    \\ \hline
	75 &     0.9954 &     2.0007    \\ \hline
	80 &     0.9954 &     2.0007    \\ \hline
	85 &     0.9955 &     2.0007    \\ \hline
	90 &     0.9955 &     2.0006    \\ \hline
	95 &     0.9956 &     2.0006    \\ \hline
	100 &     0.9956 &     2.0006    \\ \hline
	\end{array}
	\qquad
	\begin{array}{ | l | l | l | l | }
	\hline
	t & \overline{\text{var}} & \overline{\beta} \\ \hline
	0 & 5.1121 & 1.9993   \\ \hline
	5 &     0.9983 &     2.0130   \\ \hline
	10 &     0.9415 &     2.0117   \\ \hline
	15 &     0.9490 &     2.0134   \\ \hline
	20 &     0.9429 &     2.0149   \\ \hline
	25 &     0.9371 &     2.0160   \\ \hline
	30 &     0.9341 &     2.0165   \\ \hline
	35 &     0.9323 &     2.0168   \\ \hline
	40 &     0.9330 &     2.0171   \\ \hline
	45 &     0.9320 &     2.0174   \\ \hline
	50 &     0.9309 &     2.0176   \\ \hline
	55 &     0.9298 &     2.0178   \\ \hline
	60 &     0.9289 &     2.0179   \\ \hline
	65 &     0.9280 &     2.0180   \\ \hline
	70 &     0.9274 &     2.0181   \\ \hline
	75 &     0.9270 &     2.0181   \\ \hline
	80 &     0.9269 &     2.0181   \\ \hline
	85 &     0.9269 &     2.0181   \\ \hline
	90 &     0.9269 &     2.0181   \\ \hline
	95 &     0.9270 &     2.0181   \\ \hline
	100 &     0.9270 &     2.0181   \\ \hline
	\end{array}
	\]
	\caption{Computation of the mean total variation $\overline{\text{var}}$ and mean slope for $F(x)=x$. $T=100$, $M=K_1=K_2=10$. Left hand side: $\varepsilon=0.1$, right hand side: $\varepsilon=0.2$.} \label{tab_x0}
\end{table}

\begin{table}
	\[
	\begin{array}{ | l | l | l | l | }
	\hline
	t & \overline{\text{var}} & \overline{\beta} \\ \hline
	0 & 0.0000 & 2.0000   \\ \hline
	25 &     1.0000 &     2.0000   \\ \hline
	50 &     1.0000 &     2.0000    \\ \hline
	75 &     1.0000 &     2.0000    \\ \hline
	100 &     0.9998 &     2.0000    \\ \hline
	125 &     0.9995 &     2.0001    \\ \hline
	150 &     0.9987 &     2.0002    \\ \hline
	175 &     0.9978 &     2.0003    \\ \hline
	200 &     0.9969 &     2.0005    \\ \hline
	225 &     0.9966 &     2.0005    \\ \hline
	250 &     0.9966 &     2.0005    \\ \hline
	275 &     0.9964 &     2.0005    \\ \hline
	300 &     0.9962 &     2.0006    \\ \hline
	\end{array}
	\qquad
	\begin{array}{ | l | l | l | l | }
	\hline
	t & \overline{\text{var}} & \overline{\beta} \\ \hline
	0 & 0.0000 & 2.0000   \\ \hline
	25 &     1.0000 &     2.0000   \\ \hline
	50 &     0.9993 &     2.0003    \\ \hline
	75 &     0.9783 &     2.0072    \\ \hline
	100 &     0.9362 &     2.0160    \\ \hline
	125 &     0.9307 &     2.0177    \\ \hline
	150 &     0.9268 &     2.0181    \\ \hline
	175 &     0.9269 &     2.0181    \\ \hline
	200 &     0.9269 &     2.0181    \\ \hline
	225 &     0.9269 &     2.0181   \\ \hline
	250 &     0.9269 &     2.0181    \\ \hline
	275 &     0.9269 &     2.0181    \\ \hline
	300 &     0.9269 &     2.0181    \\ \hline
	\end{array}
	\]
	\caption{Computation of the mean total variation $\overline{\text{var}}$ and mean slope for $F(x)=x$, $\text{var}(f^i_0) < 10^{-4}$. $T=300$, $M=K_1=K_2=10$. Left hand side: $\varepsilon=0.1$, right hand side: $\varepsilon=0.2$.} \label{tab_x0b}
\end{table}

\begin{table}
	\[
	\begin{array}{ | l | l | l | l | }
	\hline
	t & \overline{\text{int}}_{f_*(0.1)} & \overline{\text{int}}_{f_*(0.2)} \\ \hline
	0 & 0.4096 & 0.5018   \\ \hline
	10 &     0.0101 &  0.0321     \\ \hline
	20 &     0.0049 &  0.0174       \\ \hline
	30 &     0.0028 &  0.0097     \\ \hline
	40 &     0.0017 &  0.0064      \\ \hline
	50 &     0.0011 &  0.0040      \\ \hline
	60 &     0.0007 &  0.0021       \\ \hline
	70 &     0.0005 &  0.0007       \\ \hline
	80 &     0.0004 &  0.0003       \\ \hline
	90 &     0.0003 &  0.0002       \\ \hline
	100 &     0.0002 &  0.0001       \\ \hline
	110 &     0.0002 &  0.0001       \\ \hline
	120 &     0.0002 &  0.0001       \\ \hline
	130 &     0.0001 &  0.0001       \\ \hline
	140 &     0.0001 &  0.0001       \\ \hline
	150 &     0.0001 &  0.0001      \\ \hline
	160 &     0.0001 &  0.0001       \\ \hline
	170 &     0.0001 &  0.0001       \\ \hline
	180 &     0.0001 &  0.0001       \\ \hline
	190 &     0.0001 &  0.0001       \\ \hline
	200 &     0.0001 &  0.0001       \\ \hline
	\end{array}
	\]
	\caption{Computation of $\overline{\text{int}}_{f_*(\varepsilon)}$ and mean slope for $F(x)=x$, $T=200$, $\bar{T}=5000$. $M=K_1=K_2=10$. } \label{tab_x0c}
\end{table}

On the other hand, we can also read form Table \ref{tab_x0} that $\overline{\beta}$ converges to $2.0006$ and $2.0181$ for $\varepsilon=0.1$ and $0.2$ respectively, and this suggests that the nontrivial invariant densities are stable. Table \ref{tab_x0c} provides further evidence pointing to the stability of the nontrivial invariant densities in the $L^1$-sense. Note that convergence of $\overline{\text{int}}_{f_*(\varepsilon)}$ to zero is not something to be expected since $\tilde{f}_{\bar{T}}(\varepsilon)$ is just an approximation of $f_*(\varepsilon)$. However, as $\bar{T}$ is larger, $\overline{\text{int}}_{f_*(\varepsilon)}$ converges to smaller values which supports our hypothesis.

So it seems likely that the constant density looses its stability (in the $L^1$-sense) as $\varepsilon$ becomes larger than zero, and a new stable invariant density arises (in the $L^1$-sense).

\begin{figure}[h]
	\centering
	\begin{subfigure}[b]{0.44\textwidth}
		\centering
		\includegraphics[scale=0.5]{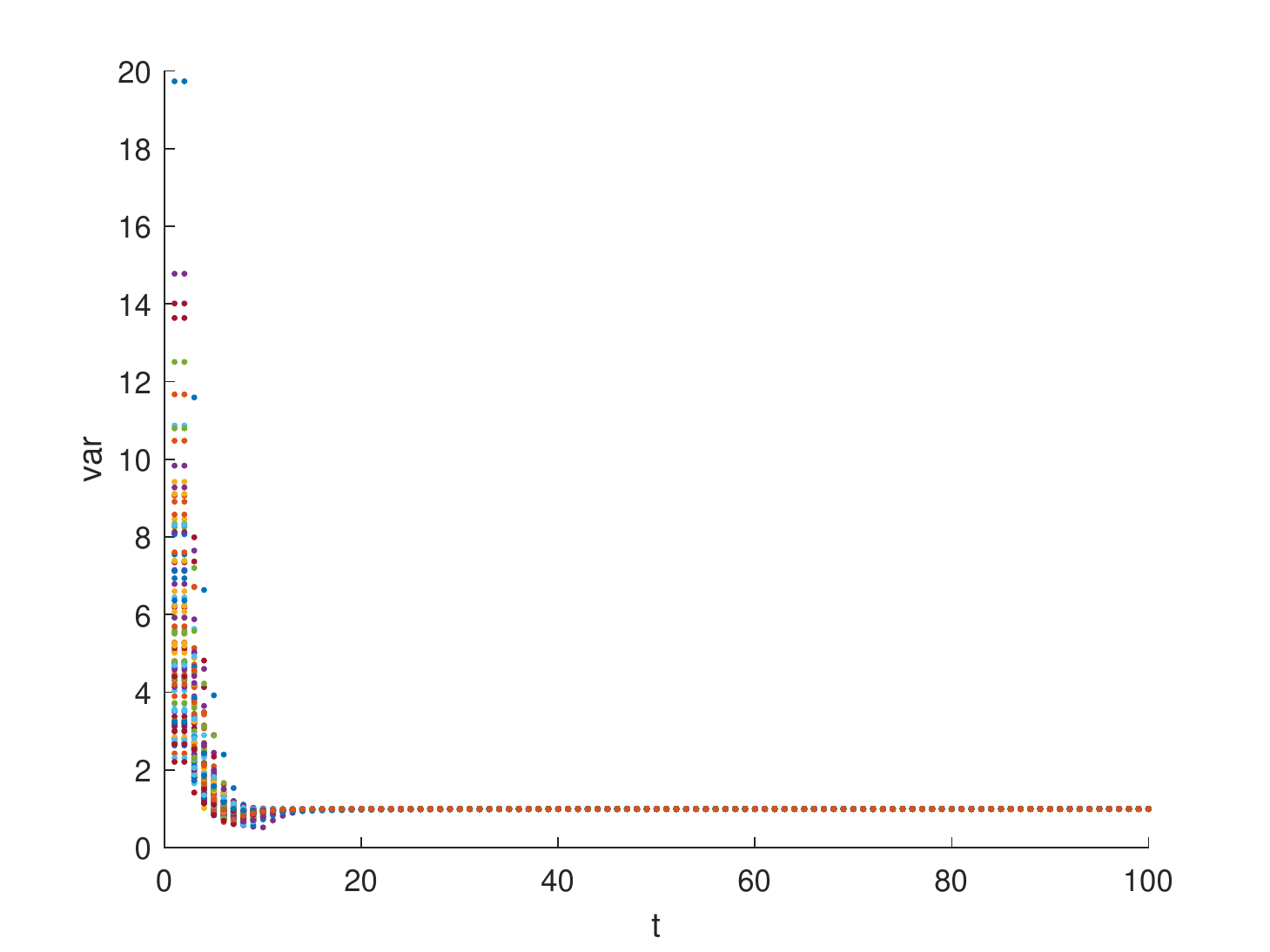}
		\caption{$\varepsilon=0.1$} 
	\end{subfigure}
	\qquad
	\begin{subfigure}[b]{0.44\textwidth}
		\centering
		\includegraphics[scale=0.5]{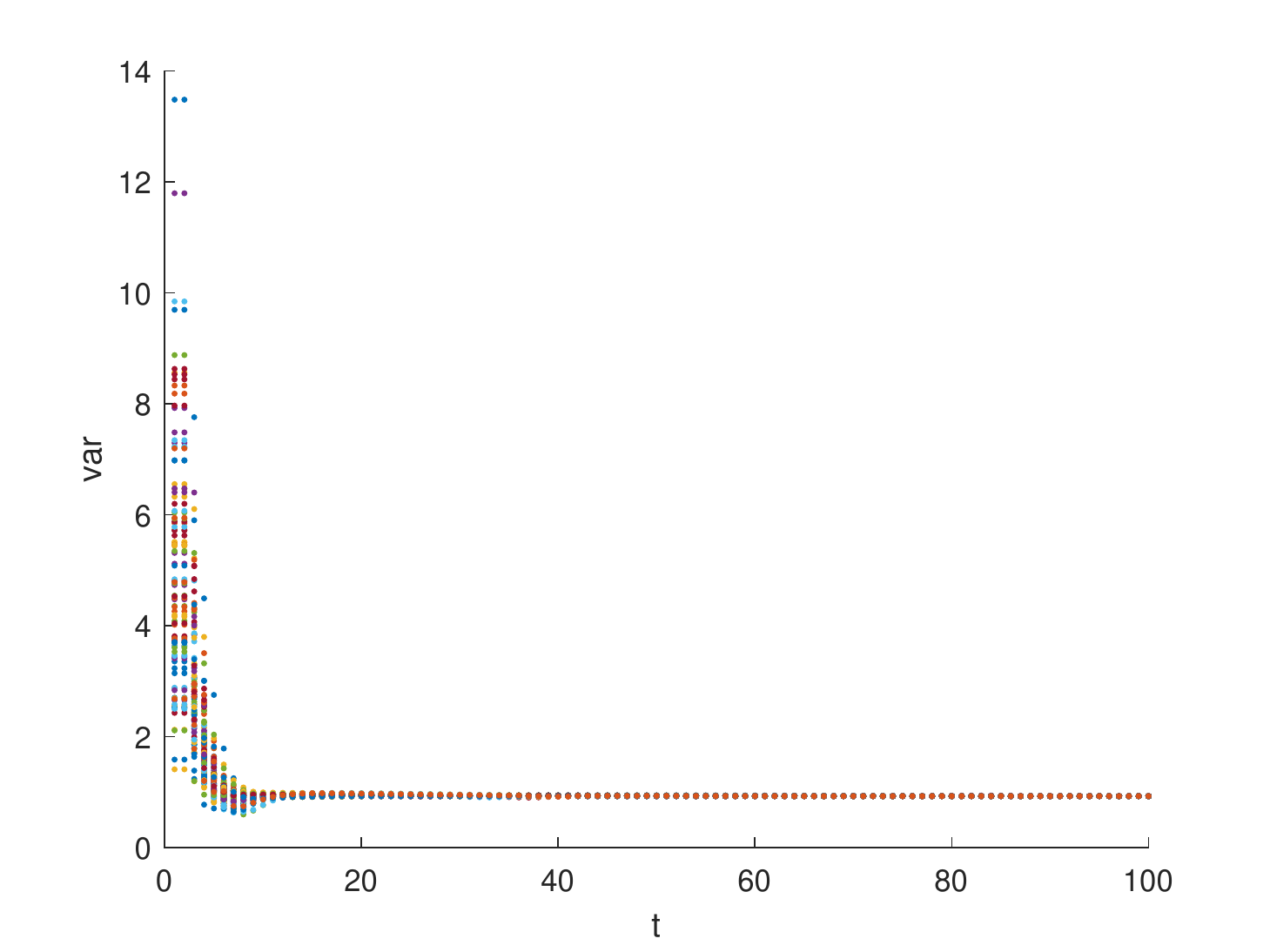}
		\caption{$\varepsilon=0.2$} 
	\end{subfigure}
	\caption{Total variation for each $f_t^i$ as a function of time, $F(x)=x$, $T=100$, $K_1\times K_2=100$.} \label{fig_idcloudvar}
\end{figure}

\begin{figure}[h]
	\centering
	\begin{subfigure}[b]{0.44\textwidth}
		\centering
		\includegraphics[scale=0.5]{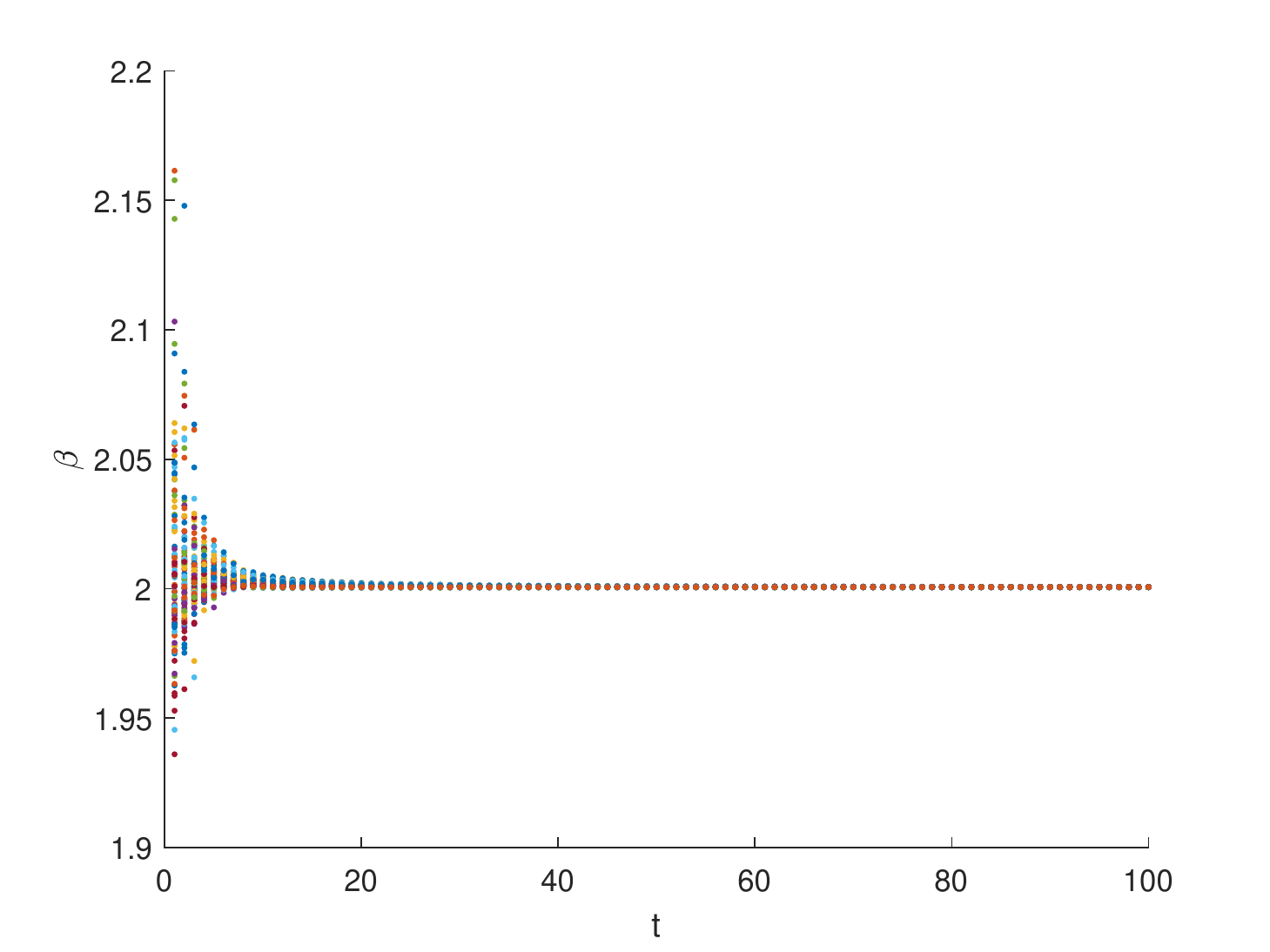}
		\caption{$\varepsilon=0.1$} 
	\end{subfigure}
	\qquad
	\begin{subfigure}[b]{0.44\textwidth}
		\centering
		\includegraphics[scale=0.5]{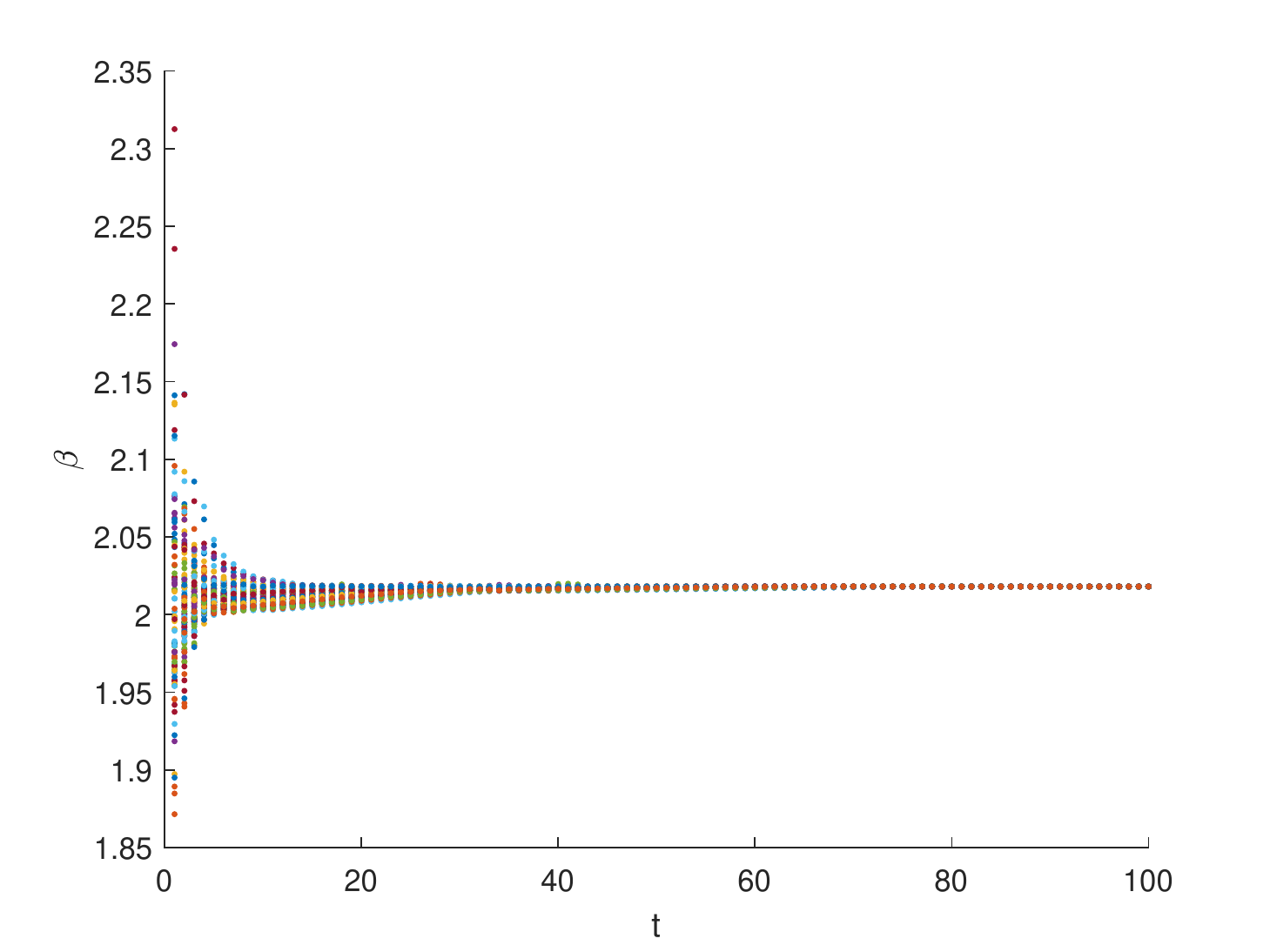}
		\caption{$\varepsilon=0.2$} 
	\end{subfigure}
	\caption{Associated slope to each $f_t^i$ as a function of time, $F(x)=x$, $T=100$, $K_1\times K_2=100$.} \label{fig_idcloudbeta}
\end{figure}

To back our assumption that $\overline{\text{var}}$ and $\bar{\beta}$ correctly describes the behavior of a typical density, we plot all the total variation and $\beta$ for the densities on Figures \ref{fig_idcloudvar} and \ref{fig_idcloudbeta} respectively, for each time instance. We can see that they all converge to a single value, hence the averaging does not give an average of different asymptotic behaviors but shows us the true one.

\begin{figure}[h]
	\centering
	\begin{subfigure}[b]{0.44\textwidth}
		\centering
		\includegraphics[scale=0.7]{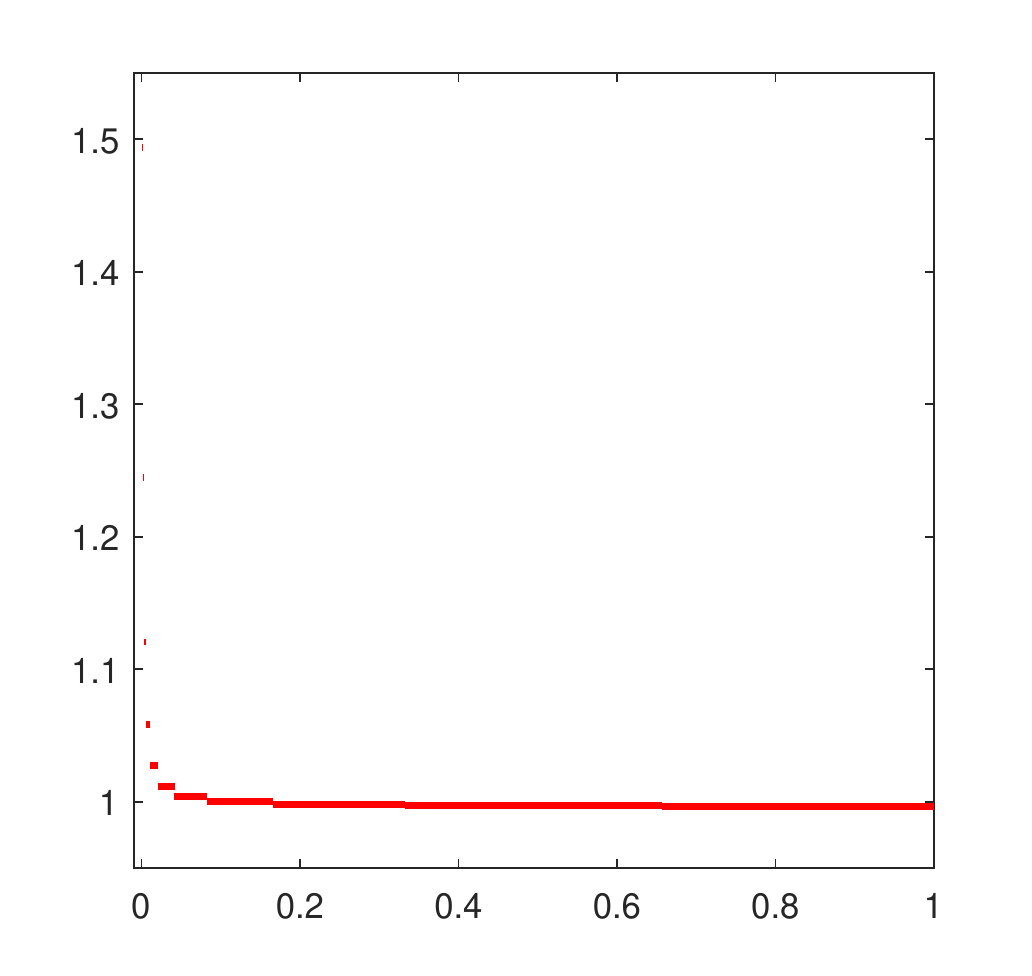}
		\caption{$\varepsilon=0.1$} 
	\end{subfigure}
	\qquad
	\begin{subfigure}[b]{0.44\textwidth}
		\centering
		\includegraphics[scale=0.7]{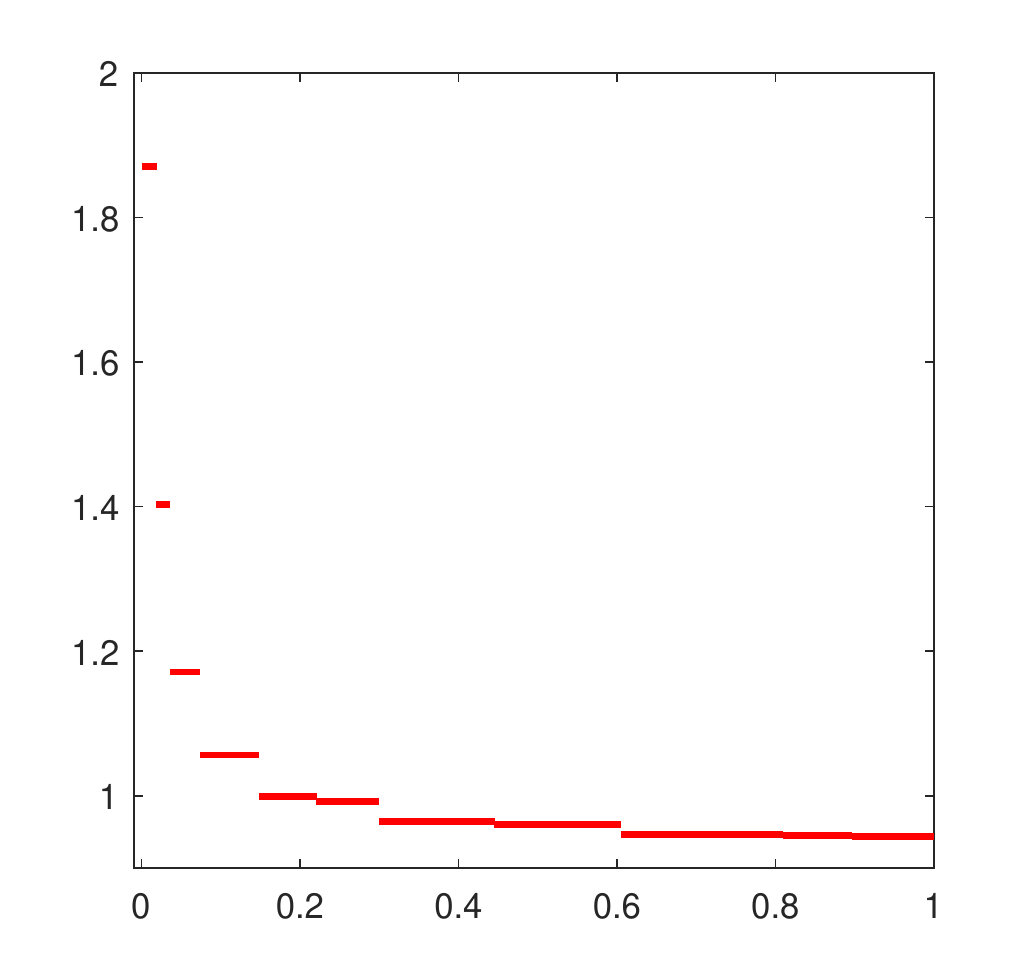}
		\caption{$\varepsilon=0.2$} 
	\end{subfigure}
	\caption{Approximation of the invariant density of the self-consistent system \eqref{eq_selfc} with $F(x)=x$ by a high iterate of an appropriate initial density.} \label{fig_densid}
\end{figure}

The asymptotic densities obtained from iterating an appropriate initial density for both cases $\varepsilon=0.1$ and $\varepsilon=0.2$ are pictured on Figure \ref{fig_densid}.

\begin{figure}[h]
	\centering
	\begin{subfigure}[b]{0.44\textwidth}
		\centering
		\includegraphics[scale=0.5]{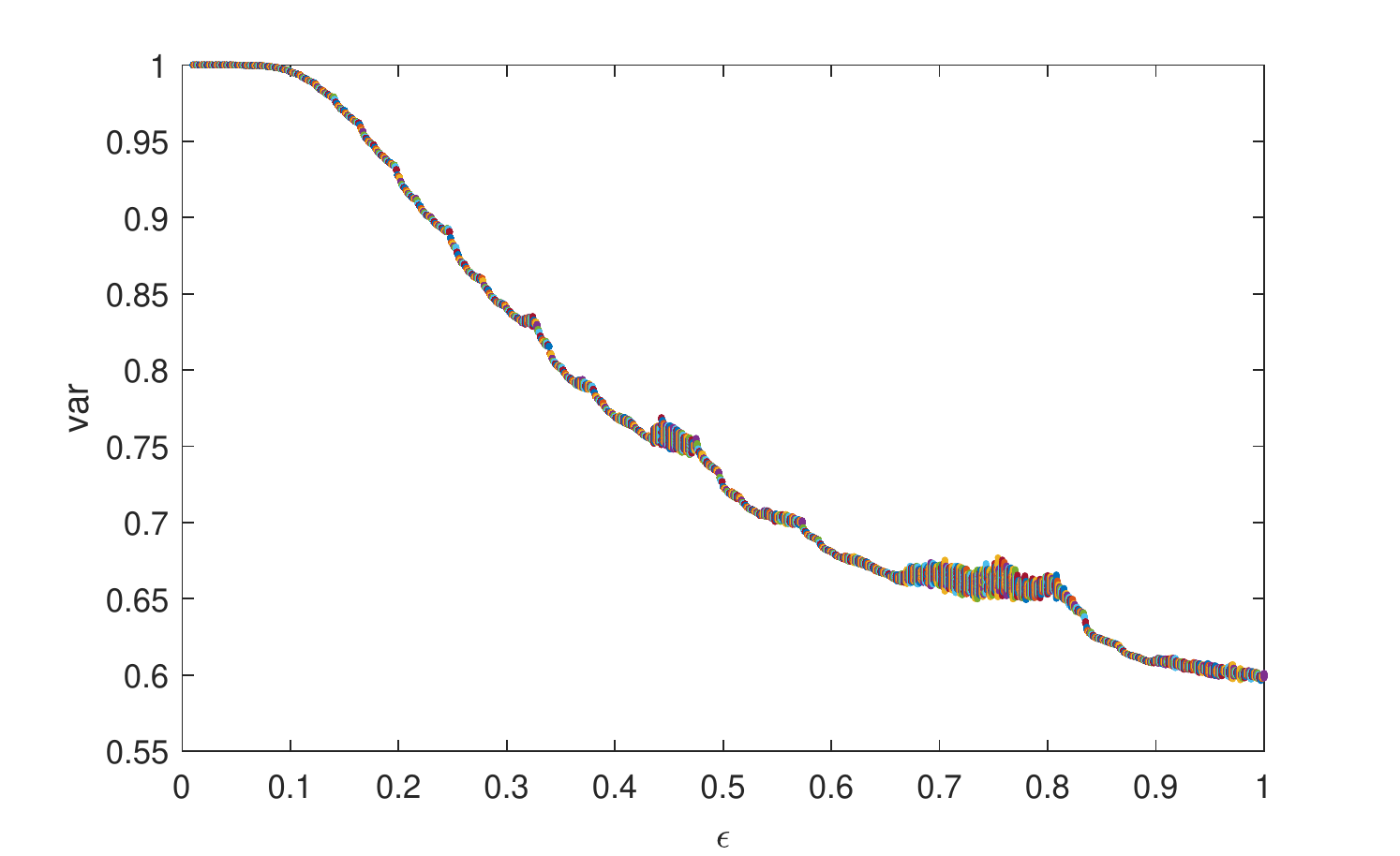}
	\end{subfigure}
	\qquad
	\begin{subfigure}[b]{0.44\textwidth}
		\centering
		\includegraphics[scale=0.5]{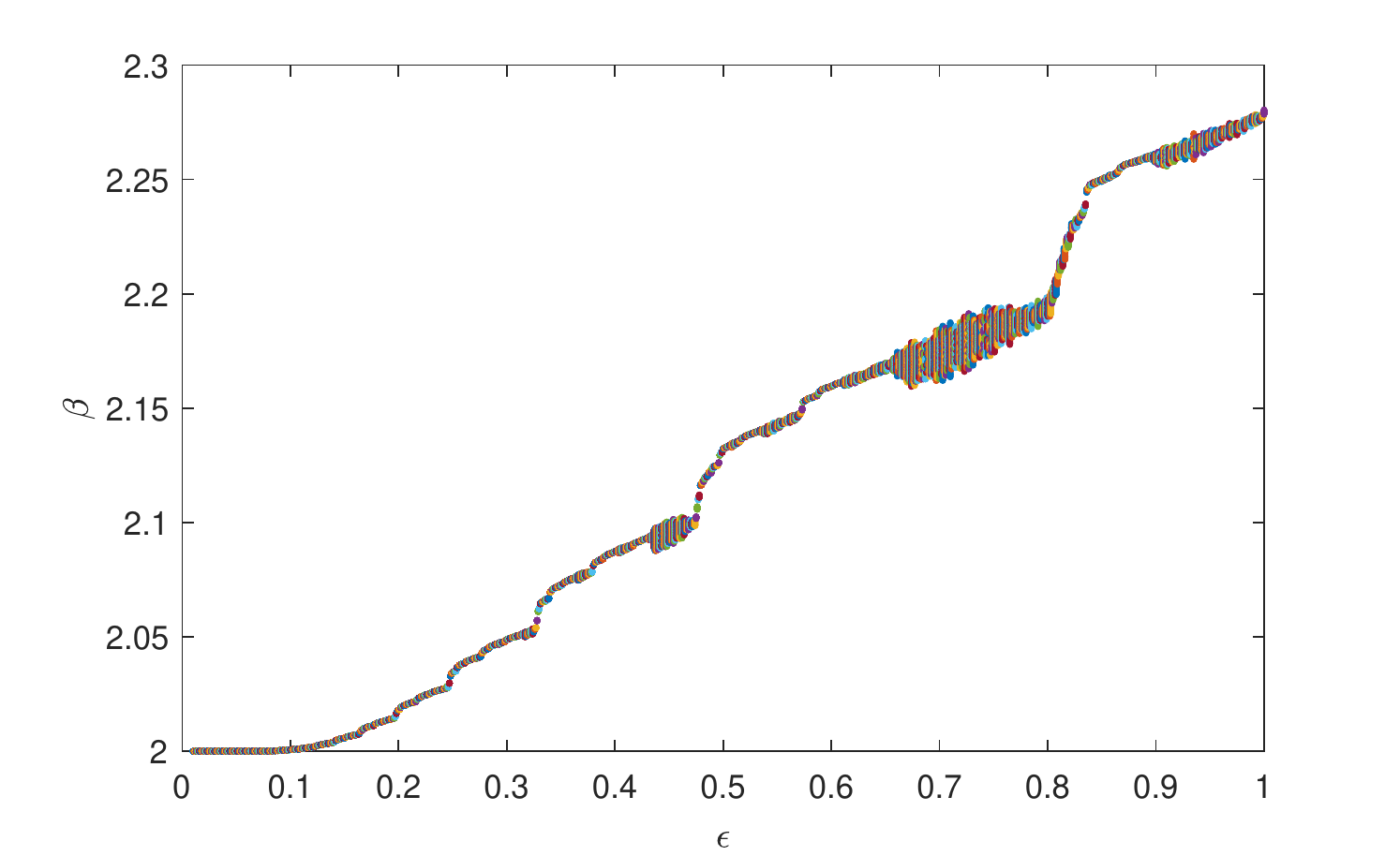}
	\end{subfigure}
	\caption{For each $\varepsilon= k \cdot 10^{-3}$, $k=1,\dots,10^3$ the values $\overline{\text{var}}(t)$ and $\overline{\beta}(t)$ are plotted for $t=150,\dots,200$, $F(x)=x$, $K_1 \times K_2=100$.} \label{fig_eps}
\end{figure}

Finally we discuss the \emph{convergence} of our method depending on the value of $\varepsilon$. By convergence we mean that the quantities $\overline{\text{var}}$ and $\overline{\beta}$ settle at a value $\overline{\text{var}}_*$ and $\overline{\beta}_*$ in the sense that 
\[
|\overline{\text{var}}_*-\overline{\text{var}}(t)| < 10^{-4} \quad \text{and} \quad |\overline{\beta}_*-\overline{\beta}(t)| < 10^{-4} \quad \text{for} \quad t=T_0,\dots,T_1,
\]
where we choose $T_0,T_1$ to be some fixed large numbers. In Figure \ref{fig_eps} we plotted the mean total variation and mean slope of the last 50 iterates of our density pool for a range of $\varepsilon$ values, that is, we chose $T_0=150$ and $T_1=200$. If this produces a considerable range of values for a single $\varepsilon$ (an interval having length larger then $10^{-4}$ above an $\varepsilon$ value), then the method does not converge. So we can read from this figure that our method is converges until $\varepsilon \approx 0.4$.

Now we move on to consider the setting of Theorem \ref{theo_main}. We studied the cases $F(x)=x^2$, $F(x)=x^4$ and $F(x)=x^6$ for a few values of $\varepsilon$ for which simulations similar to the ones discussed for the case of $F(x)=x$ clearly suggest unique or multiple absolutely continuous invariant measures. In the first columns of Tables \ref{tab_x2} we see computations in the cases when the constant density is the unique invariant one, see Figure \ref{fig_psi}. We can see that in all cases $\overline{\text{var}}(t)$ decreases rapidly, suggesting the stability of the invariant constant density in $BV$-sense. 

\begin{table}
	\[
	\begin{array}{ | l | l | l | l | }
	\hline
	t &  \varepsilon=1 & \varepsilon=2.5 \\ \hline 
	0 &  6.4694 & 5.0341 \\ \hline
	5 &          1.2483 & 1.1951   \\ \hline
	10 &          0.9808 & 0.8849   \\ \hline
	15 &          0.9914 & 0.8471  \\ \hline
	20 &          0.9976 & 0.8447  \\ \hline
	25 &          0.9994 & 0.8275  \\ \hline
	30 &          0.9999 & 0.8418  \\ \hline
	35 &          1.0000 & 0.8305  \\ \hline
	40 &          1.0000 & 0.8468  \\ \hline
	45 &          1.0000 & 0.8321  \\ \hline
	50 &          0.9531 & 0.8358 \\ \hline
	55 &          0.5357 & 0.8382  \\ \hline
	60 &          0.1802 & 0.7651  \\ \hline
	65 &          0.0128 & 0.6366  \\ \hline
	70 &          0.0004 & 0.5117  \\ \hline
	75 &          0.0000 & 0.4535  \\ \hline
	80 &          0.0000 & 0.4269  \\ \hline
	85 &          0.0000 & 0.4255  \\ \hline
	90 &          0.0000 & 0.4015  \\ \hline
	95 &          0.0000 & 0.3966  \\ \hline
	100 &      0.0000 & 0.3813  \\ \hline
	\end{array}
	\qquad
	\begin{array}{ | l | l | l | l | }
	\hline
	t & \varepsilon=1 &  \varepsilon=35 \\ \hline 
	0 & 5.3766 & 6.4806 \\ \hline
	5 &         1.2370 &            1.1785 \\ \hline
	10 &         0.9882 &            0.9788 \\ \hline
	15 &         0.6884 &              0.9873 \\ \hline
	20 &         0.0474 &              0.5994 \\ \hline
	25 &         0.0015 &           0.1093 \\ \hline
	30 &     0.0000 &              0.0347\\ \hline
	35 &     0.0000 &              0.0023 \\ \hline
	40 &     0.0000 &          0.0001 \\ \hline
	45 &     0.0000 &          0.0000 \\ \hline
	50 &     0.0000 &         0.0000 \\ \hline
	55 &     0.0000 &         0.0000 \\ \hline
	60 &     0.0000 &          0.0000 \\ \hline
	65 &     0.0000 &          0.0000 \\ \hline
	70 &     0.0000 &          0.0000 \\ \hline
	75 & 0 &         0.0000 \\ \hline
	80 & 0 &  0.0000 \\ \hline
	85 & 0 &  0 \\ \hline
	90 & 0 &  0 \\ \hline
	95 & 0 &  0 \\ \hline
	100 & 0 &  0 \\ \hline
	\end{array}
	\qquad
	\begin{array}{ | l | l | l | l | }
	\hline
	t &  \varepsilon=1 & \varepsilon=400 \\ \hline 
	0 &   7.7582 & 5.1444 \\ \hline
	5 &         1.3852 &     1.0419 \\ \hline
	10 &         0.5224 &     0.5990 \\ \hline
	15 &          0.0210 &     0.1435 \\ \hline
	20 &          0.0007 &     0.0026 \\ \hline
	25 &          0.0000 &     0.0002 \\ \hline
	30 &          0.0000 &     0.0000 \\ \hline
	35 &          0.0000 &     0.0000 \\ \hline
	40 &          0.0000 &     0.0000 \\ \hline
	45 &          0.0000 &     0.0000 \\ \hline
	50 &          0.0000 &     0.0000 \\ \hline
	55 &         0.0000 &     0.0000 \\ \hline
	60 &          0.0000 &     0.0000 \\ \hline
	65 &          0 &     0.0000 \\ \hline
	70 &          0 &     0 \\ \hline
	75 &        0 &     0 \\ \hline
	80 &  0 &     0\\ \hline
	85 &  0 &     0 \\ \hline
	90 &  0 &     0 \\ \hline
	95 &  0 & 0 \\ \hline
	100 &  0 & 0 \\ \hline
	\end{array}
	\]
	\caption{Computation of the mean total variation $\overline{\text{var}}$. $T=100$, $M=K_1=K_2=10$. Left hand side: $F(x)=x^2$, center: $F(x)=x^4$, right hand side: $F(x)=x^6$.} \label{tab_x2}
\end{table}

In the second columns of Tables \ref{tab_x2}, we considered situations where multiple invariant densities exist, see Figure \ref{fig_psi20} (a) and Figure \ref{fig_psi30}. In case of $F(x)=x^2$, our method does not converge in the sense that the values of $\overline{\text{var}}$ do not settle at a value with precision $10^{-4}$ if we consider any subinterval $t=T_0,\dots,100$. We did other experiments with $T_1=500$ and $T_1=1000$, $T_0=T_1-100$ but obtained similar results. However, a slow decrease of $\overline{\text{var}}$ is observable, so we do not exclude the possibility that we could obtain convergence for $T_1=10^k$ for some $k$ large, but our limited resources prohibit us from carrying out such computations in a reasonable amount of time. 

On the other hand, for $F(x)=x^4$ and $F(x)=x^6$ we see fast convergence of $\overline{\text{var}}$ to zero implying that the constant density is likely to be stable one in $BV$-sense.

\begin{figure}[h]
	\centering
	\begin{subfigure}[b]{0.44\textwidth}
		\centering
		\includegraphics[scale=0.4]{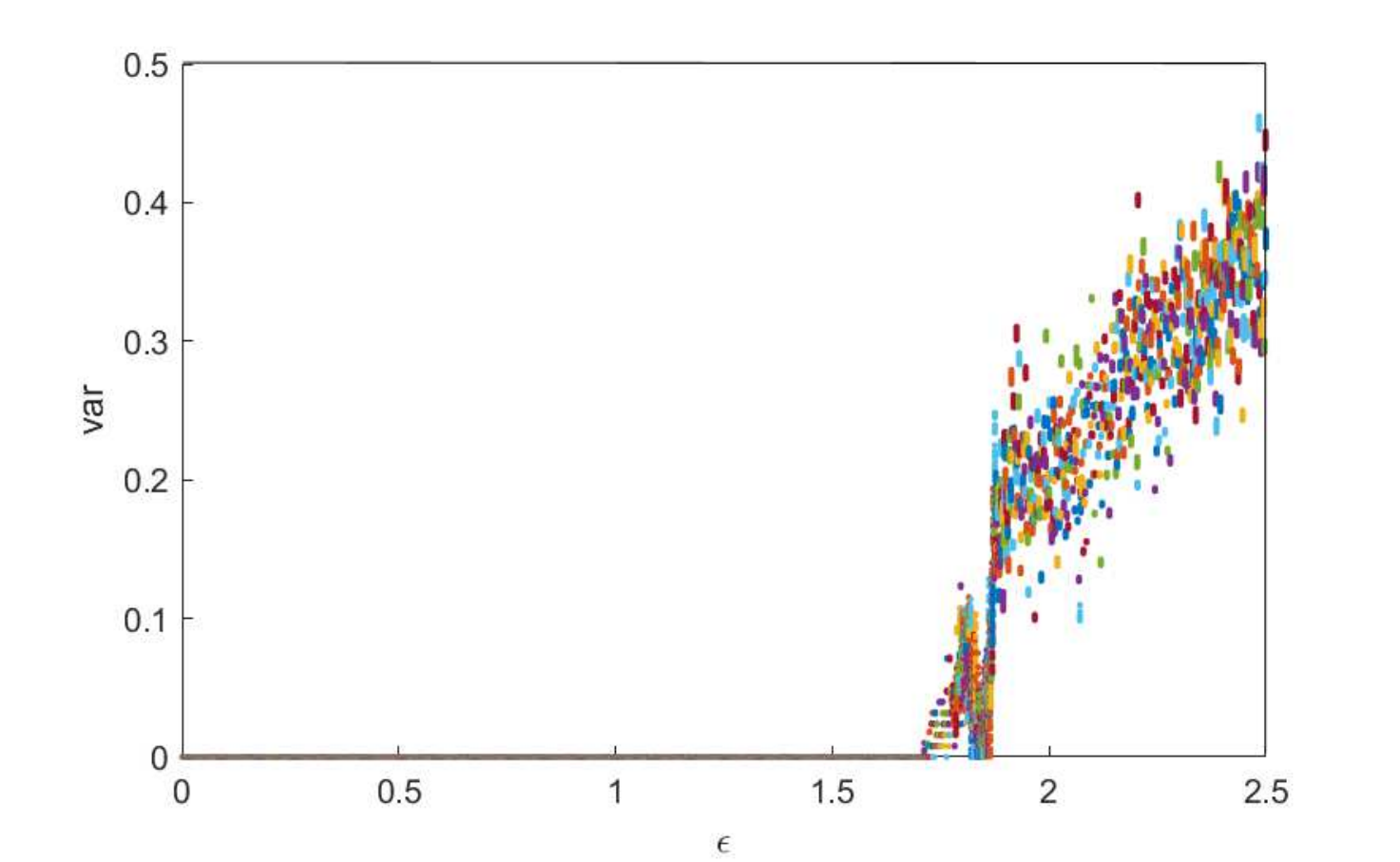}
		\caption{$F(x)=x^2$, $\Delta=10^{-3}$, $E=2.5$} 
	\end{subfigure}
	\qquad
	\begin{subfigure}[b]{0.44\textwidth}
		\centering
		\includegraphics[scale=0.4]{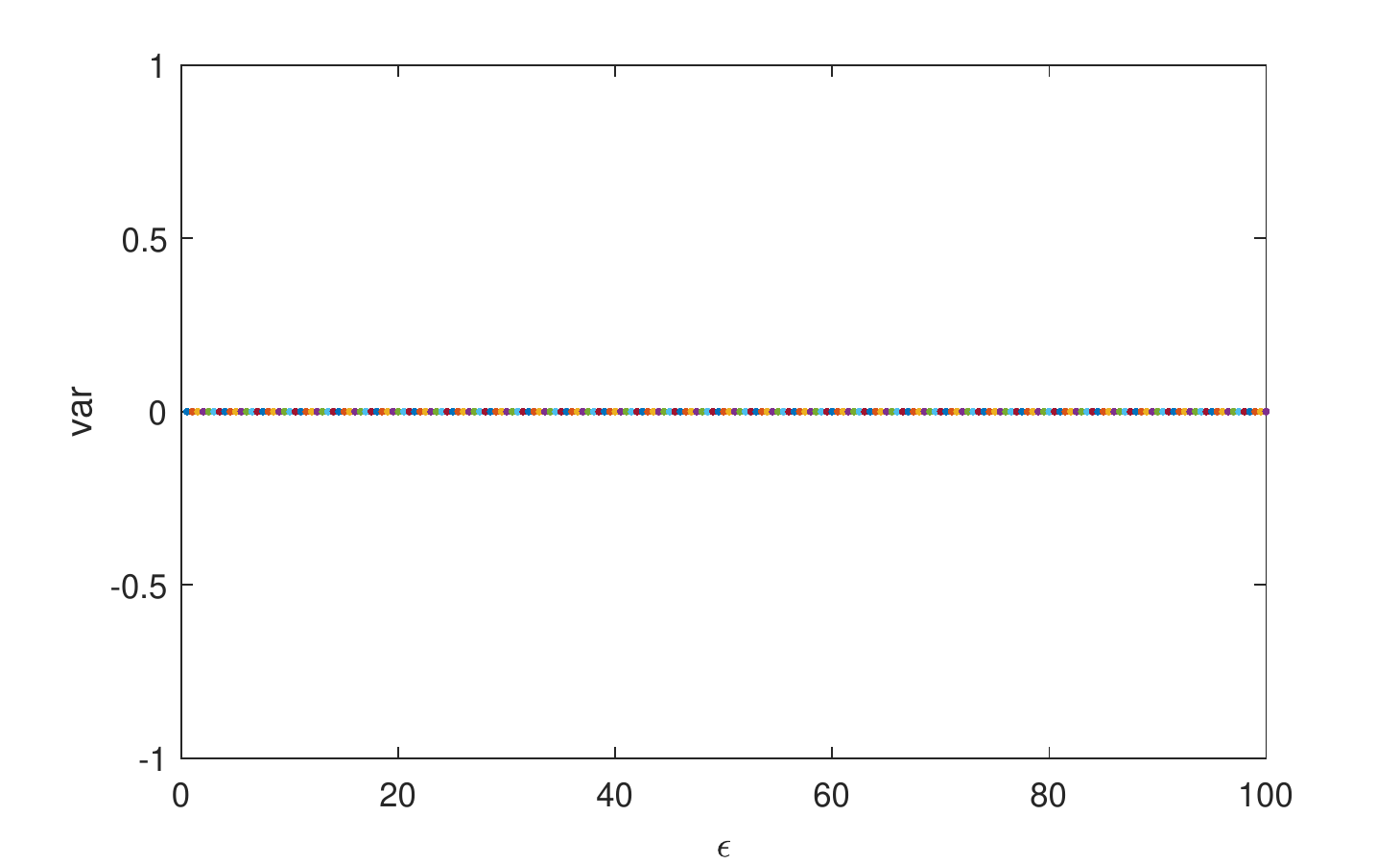}
		\caption{$F(x)=x^4$, $\Delta=0.5$, $E=100$} 
	\end{subfigure}
	\qquad
	\begin{subfigure}[b]{0.44\textwidth}
		\centering
		\includegraphics[scale=0.4]{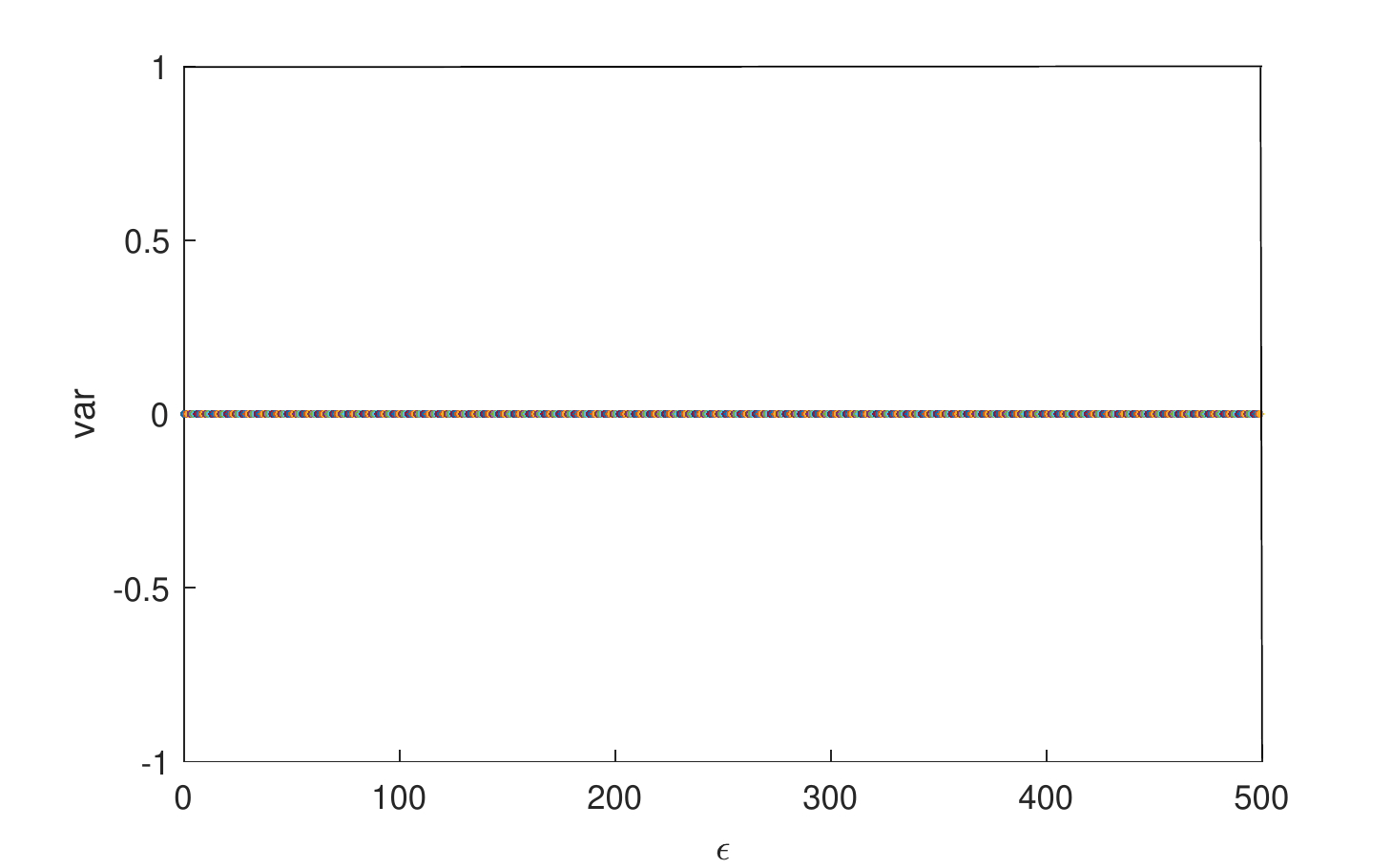}
		\caption{$F(x)=x^6$, $\Delta=10^{-2}$, $E=12$} 
	\end{subfigure}
	\caption{For each $\varepsilon= k \cdot \Delta$, $k=1,\dots,E/\Delta$ the values $\overline{\text{var}}(t)$ are plotted for $t=150,\dots,200$, $K_1 \times K_2=100$.} \label{fig_eps2}
\end{figure}

On Figure \ref{fig_eps2} we discuss the convergence of our method for a range of $\varepsilon$ similarly as we have done for $F(x)=x$ (plotted on Figure \ref{fig_eps}). We can see that for $F(x)=x^2$ our method becomes erratic for larger $\varepsilon$ values than $\varepsilon \approx 1.6$. But our method seems to converge nicely in the $F(x)=x^4$ and $F(x)=x^6$ case for all values of $\varepsilon$ considered.

\FloatBarrier 

\section{Concluding remarks}

To answer the question regarding stability rigorously, the careful study of the self-consistent transfer operator is necessary. When $\varepsilon=0$, $$\mathcal{F}_{\varepsilon}^n=P_T^n,$$ where $P_T$ is the transfer operator of the doubling map. The stability result regarding the doubling map can be proved by elementary means: one can show by explicit calculations that $\|P_T^nf-\mathbf{1}\|_{BV}\leq \frac{C}{2^n}\|f-\mathbf{1}\|_{BV}$ for all $f \in BV([0,1],\mathbb{R})$ such that $\int_0^1 f=1$ However, when $\varepsilon > 0$ we have to deal with the self-consistency. In this case
\[
\mathcal{F}_{\varepsilon}^n=P^{\varepsilon}_{f_0}P^{\varepsilon}_{f_1}\dots P^{\varepsilon}_{f_{n-1}}
\]
for some densities $f_0,\dots f_{n-1}$, so the problem does not simplify to the study of a single linear operator. Provided that $\varepsilon$ is small enough, it is natural to expect that $P^{\varepsilon}_{f_0}P^{\varepsilon}_{f_1}\dots P^{\varepsilon}_{f_{n-1}}$ is `close' to $P_T^n$ in some sense, hence acts similarly. In the coupled map systems of \cite{keller2000ergodic} and \cite{balint2018synchronization} giving rise to self-consistent dynamics this is precisely the strategy to prove stability in $BV$. However, the ($C^3$ or $C^2$ and Lipschitz second derivative) smoothness of the stepwise dynamics is an essential part of their proof. In the setting of this paper, the stepwise dynamics $T_{\mu}^{\varepsilon}$ is discontinuous, posing a major technical difficulty, so it can also be the case that different tools are needed to study the asymptotic behavior of the operator $\mathcal{F}_{\varepsilon}$. In correspondence to the numerical stability of the nontrivial invariant densities of the case $F(x)=x$, we also believe that in full generality only stability in the $L^1$-sense is to be expected.   

Another question that arises observing the Figures \ref{fig_psi0} and \ref{fig_psi20} is if the intersection of the numerical approximation of $\beta \mapsto \psi^{\varepsilon}(\beta)$ and the line $x=y$ approximates a single intersection of $\beta \mapsto \psi^{\varepsilon}(\beta)$ and $x=y$ or infinitely many accumulating ones. As the regularity of $\psi^{\varepsilon}$ is quite low, one can quite possibly imagine infinitely many intersections reminiscent of the infinitely many accumulating zeros of the trajectory of Brownian motion. This would be interesting, as it would give infinitely many absolutely continuous invariant measures.

\newpage

\bibliographystyle{alpha}
\bibliography{references_selfc}
\nocite{*}

\end{document}